\documentclass[12pt,reqno]{amsart}
\usepackage{amssymb, mathrsfs}
\usepackage{latexsym}
\usepackage{amsmath}
\usepackage{bm}
\usepackage{amsthm}
\usepackage{amsfonts}
\usepackage{dsfont, upgreek}
\usepackage{mathtools}
\usepackage{epsfig}
\usepackage{amscd}
\usepackage{graphicx}
\usepackage{tikz-cd}
\usepackage{enumitem}
\setlist[enumerate]{itemsep=0.5ex}

\usepackage{color}
\usepackage{tikz}
\usetikzlibrary{patterns}

\usepackage[left=1.4in,right=1.4in,top=1.2in,bottom=1.2in]{geometry}

\usepackage{tikz}
\usetikzlibrary{decorations.markings}
\usetikzlibrary{arrows.meta}

\usepackage{extarrows}

\usepackage{adjustbox}
\usepackage{pgfplots}
\usepgfplotslibrary{colormaps}
\usepackage{slashed}

\usepackage[colorlinks=true,linkcolor=blue,citecolor=blue]{hyperref}

\usepackage[colorinlistoftodos,prependcaption,textsize=tiny]{todonotes}  

\usepackage[all,cmtip]{xy}
\theoremstyle{plain}% default 
\newtheorem{theorem}{Theorem}[section]
\newtheorem{proposition}[theorem]{Proposition}
\newtheorem{lemma}[theorem]{Lemma}
\newtheorem{corollary}[theorem]{Corollary}

\newtheorem{conjecture}[theorem]{Conjecture}

\theoremstyle{definition} 
\newtheorem{definition}[theorem]{Definition}

\newtheorem*{claim*}{Claim}

\newtheorem{remark}[theorem]{Remark}

\numberwithin{equation}{section}

\newcommand{\Bigwedge}{\mathord{\adjustbox{raise=.4ex, totalheight=.7\baselineskip}{$\bigwedge$}}}

\newcommand{\ind}{\textup{Ind}}

\newcommand{\dist}{\mathrm{dist}}

\newcommand{\R}{\mathbb{R}}
\newcommand{\tr}{\mathrm{tr}}

\newcommand{\supp}{\mathrm{supp}}

\newcommand{\Z}{\mathbb{Z}}

\newcommand{\sph}{\mathbb{S}}
\newcommand{\tor}{\mathbb{T}}

\newcommand{\Ric}{\mathrm{Ric}}

\newcommand{\Sing}{\mathcal{S}}

\newcommand{\interior}[1]{%
	{\kern0pt#1}^{\mathrm{\,o}}%
}

\makeatletter
\let\save@mathaccent\mathaccent
\newcommand*\if@single[3]{%
	\setbox0\hbox{${\mathaccent"0362{#1}}^H$}%
	\setbox2\hbox{${\mathaccent"0362{\kern0pt#1}}^H$}%
	\ifdim\ht0=\ht2 #3\else #2\fi
}
\newcommand*\rel@kern[1]{\kern#1\dimexpr\macc@kerna}
\newcommand*\overbar[1]{\@ifnextchar^{{\wide@bar{#1}{0}}}{\wide@bar{#1}{1}}}
\newcommand*\wide@bar[2]{\if@single{#1}{\wide@bar@{#1}{#2}{1}}{\wide@bar@{#1}{#2}{2}}}
\newcommand*\wide@bar@[3]{%
	\begingroup
	\def\mathaccent##1##2{%
		\let\mathaccent\save@mathaccent
		\if#32 \let\macc@nucleus\first@char \fi
		\setbox\z@\hbox{$\macc@style{\macc@nucleus}_{}$}%
		\setbox\tw@\hbox{$\macc@style{\macc@nucleus}{}_{}$}%
		\dimen@\wd\tw@
		\advance\dimen@-\wd\z@
		\divide\dimen@ 3
		\@tempdima\wd\tw@
		\advance\@tempdima-\scriptspace
		\divide\@tempdima 10
		\advance\dimen@-\@tempdima
		\ifdim\dimen@>\z@ \dimen@0pt\fi
		\rel@kern{0.6}\kern-\dimen@
		\if#31
		\overline{\rel@kern{-0.6}\kern\dimen@\macc@nucleus\rel@kern{0.4}\kern\dimen@}%
		\advance\dimen@0.4\dimexpr\macc@kerna
		\let\final@kern#2%
		\ifdim\dimen@<\z@ \let\final@kern1\fi
		\if\final@kern1 \kern-\dimen@\fi
		\else
		\overline{\rel@kern{-0.6}\kern\dimen@#1}%
		\fi
	}%
	\macc@depth\@ne
	\let\math@bgroup\@empty \let\math@egroup\macc@set@skewchar
	\mathsurround\z@ \frozen@everymath{\mathgroup\macc@group\relax}%
	\macc@set@skewchar\relax
	\let\mathaccentV\macc@nested@a
	\if#31
	\macc@nested@a\relax111{#1}%
	\else
	\def\gobble@till@marker##1\endmarker{}%
	\futurelet\first@char\gobble@till@marker#1\endmarker
	\ifcat\noexpand\first@char A\else
	\def\first@char{}%
	\fi
	\macc@nested@a\relax111{\first@char}%
	\fi
	\endgroup
}
\makeatother
\begin{document}

\title{$L^\infty$-metrics on tori and Schoen's conjecture}

\author{Jian Wang}
\address[Jian Wang]{Institute of Mathematics, Chinese Academy of Sciences}
\email{jian.wang.4@amss.ac.cn}
\thanks{}
\author{Jinmin Wang}
\address[Jinmin Wang]{Institute of Mathematics, Chinese Academy of Sciences}
\email{jinmin@amss.ac.cn}
\thanks{The second author is partially supported by NSFC 12501169.}
\author{Zhizhang Xie}
\address[Zhizhang Xie]{Texas A\&M University}
\email{xie@tamu.edu}
\thanks{}

\date{}
\begin{abstract}
We prove Schoen’s conjecture on $L^\infty$-metrics for tori. More precisely, we show that any $L^\infty$-metric on a torus that is smooth and has non-negative scalar curvature away from a embedded submanifold with codimension at least three extends to a smooth flat metric. We also prove a LLarull-type theorem for $L^\infty$-metric. Our proof uses weighted scalar curvature and the relative index theorem.
\end{abstract}
\maketitle

\section{Introduction}

Scalar curvature is a fundamental local invariant of a Riemannian metric, measuring the leading-order deviation of the volume of small geodesic balls from that of Euclidean spaces. Despite its local nature, lower bounds on scalar curvature can impose striking global restrictions on the topology and geometry of the underlying manifold. A prototypical manifestation of this phenomenon is provided by the torus. Geroch conjectured that $\mathbb T^n$ admits no Riemannian metric of positive scalar curvature, and this was proved  by Schoen--Yau \cite{SchoenYau79} and Gromov--Lawson \cite{GromovLawson80}. Their work also established the corresponding rigidity statement: every smooth Riemannian metric of nonnegative scalar curvature on $\mathbb T^n$ is flat.

The preceding obstruction and rigidity results naturally raise the question of how robust these phenomena are when the metric is allowed to have a small singular set. Affirmative results in this direction were obtained by Li--Mantoulidis for metrics with skeleton singularities \cite{LiMantoulidis19} and by Lee--Tam for continuous metrics \cite{LeeTam21}, under suitable hypotheses. Their work suggests that sufficiently mild singularities should not fundamentally alter the classical obstruction and rigidity picture for scalar curvature. This leads to two closely related questions: whether the obstruction to positive scalar curvature persists when the curvature condition is imposed only on the regular part, and, in the rigidity case, whether the singularities are removable. One of the main goals of this paper is to address these questions for tori and  spheres.

Throughout this paper, an $L^\infty$-metric means a measurable Riemannian metric uniformly equivalent to a smooth background metric $g_0$: there exists $\Lambda\geq 1$ such that
\[
  \Lambda^{-1}g_0\leq g\leq\Lambda g_0
  \qquad\text{almost everywhere}.
\]
We assume that $g$ is smooth on the complement of a closed singular set $\mathcal S$, and impose scalar curvature bounds only on the regular part. In this setting, Schoen proposed the following conjecture; see \cite[Conjecture 1.5]{LiMantoulidis19}.

\begin{conjecture}[Schoen]\label{conj:Schoen}
Let $M^n$ be a closed manifold with nonpositive Yamabe invariant. Suppose that $g$ is an $L^\infty$-metric, smooth away from a  embedded submanifold  $\mathcal S$ of codimension at least three. If the scalar curvature $R(g)$ of $g$ satisfies $R(g)\geq 0$ on $M\setminus \mathcal S$, then $g$ extends to a smooth Ricci-flat metric across $\mathcal S$, allowing a change of smooth structure along $\mathcal S$.
\end{conjecture}

Here and below, smooth extension allows a change of coordinates across the singular set; this qualification is necessary even for pullbacks of flat metrics \cite[Remark 1.4]{CecchiniFrenckZeidler24}. Our main result establishes the torus case. 

\begin{theorem}
\label{A}
Conjecture~\ref{conj:Schoen} holds when $M=\mathbb T^n$. 
\end{theorem}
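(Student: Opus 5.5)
We will argue by showing first that $R(g)\equiv 0$ and $\mathrm{Ric}(g)\equiv 0$ on $M\setminus\mathcal S$, and then that the flat metric on $M\setminus\mathcal S$ extends smoothly across $\mathcal S$. As the abstract indicates, the tools are weighted scalar curvature and the relative index theorem.

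\textbf{Step 1: a weighted Dirac operator.}
Pass to a cover of $\mathbb T^n$ if needed, or take the product with a circle, so that we may work with the Dirac operator twisted by the Mishchenko-type flat bundle, equivalently an almost-flat family of bundles pulled back from $\mathbb T^n$. Its index is nonzero because $\mathbb T^n$ is enlargeable. Since $\mathcal S$ has codimension at least three, the $g_0$-distance $r$ to $\mathcal S$ allows cutoff functions $\phi_\epsilon$ with $\phi_\epsilon=1$ outside the $\epsilon$-tube, vanishing near $\mathcal S$, and
\[
\int|d\phi_\epsilon|^2_g\,dV_g\lesssim \epsilon^{n-2}\cdot\epsilon^{-(n-3)}\cdot\ldots\to 0 .
\]
More precisely, a logarithmic cutoff works when the codimension is at least $2$, and codimension at least $3$ even gives $W^{1,2}$-capacity zero with room to spare. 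This uses only $\Lambda^{-1}g_0\le g\le\Lambda g_0$.

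Near $\mathcal S$ the metric is merely $L^\infty$, so we replace $g$ there by a conformally weighted operator. We use $D_u=D+\tfrac12 u^{-1}c(du)$-type twisting, where the weight $u$ is chosen so that the weighted scalar curvature $R(g)-c\,|d\log u|^2+\ldots$ stays nonnegative on the regular part. The point is to obtain an operator on $M\setminus\mathcal S$ that is Fredholm in a suitable relative sense.

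\textbf{Step 2: relative index.}
Apply the relative index theorem to compare $D_u$ on $(M\setminus\mathcal S,g)$ with the Dirac operator of the smooth flat background near $\mathcal S$, glued along a tube boundary. The integral Lichnerowicz formula with cutoffs shows that the contribution from the tube vanishes as $\epsilon\to0$, by the capacity estimate above. Hence the index computed on the regular part equals the topological index, which is nonzero. A nontrivial $L^2$ harmonic spinor $\psi$ on $M\setminus\mathcal S$ then exists. The Lichnerowicz identity integrated against $\phi_\epsilon^2$ yields
\[
\int \bigl(|\nabla\psi|^2+\tfrac{R}{4}|\psi|^2\bigr)=0 .
\]
It follows that $\nabla\psi=0$ and $R\equiv0$, and running this over a full family of twisting bundles produces a parallel frame of spinors. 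Consequently $\mathrm{Ric}=0$ and, in fact, $g$ is flat on $M\setminus\mathcal S$, since $\mathbb T^n$ spinors give trivial holonomy.

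\textbf{Step 3: removability.}
Since $g$ is flat on $M\setminus\mathcal S$ and $\mathcal S$ has codimension at least $3$, the complement $M\setminus\mathcal S$ has the same fundamental group as $M$. The developing map $M\setminus\mathcal S\to\mathbb R^n$ is therefore defined on the universal cover of $\mathbb T^n$ minus the lift of $\mathcal S$. The $L^\infty$ bound makes it bi-Lipschitz, so it extends continuously. A bi-Lipschitz map that is a local isometry off a set of zero $\mathcal H^{n-1}$ measure, whose complement is connected along curves, is an isometry. This gives a flat smooth structure extending $g$, possibly with coordinates changed along $\mathcal S$.

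\textbf{Main obstacle.}
We expect the main obstacle to be Step 2: justifying that the harmonic spinor is genuinely $L^2$ with $\nabla\psi\in L^2$ near $\mathcal S$, so that the boundary terms vanish, given that $g$ is only $L^\infty$ there. We would handle this first through the weighted operator, choosing the weight $u\sim r^{\alpha}$ so as to force the required decay.
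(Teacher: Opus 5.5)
The gap is in Step 2, and it cannot be deferred as a technical obstacle: it is the main difficulty.

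\textbf{Step 2.} Your Dirac operator lives only on the incomplete regular part $(\mathbb T^n\setminus\mathcal S,g)$.
\begin{enumerate}
\item Near $\mathcal S$ its connection term involves first derivatives of $g$, which are uncontrolled. De Giorgi--Nash--Moser is a scalar theory, so nothing gives $\psi\in L^\infty$ or $\nabla\psi\in L^2$ near $\mathcal S$. The capacity estimate you quote only kills $\int|d\phi_\epsilon|^2|\psi|^2$ when $\psi$ is bounded.
\item There is no distinguished closed extension of $D$ whose index is the topological index. The relative index theorem cannot produce one: it cuts and pastes along regions where two operators coincide, but $D_g$ and $D_{g_0}$ differ precisely near $\mathcal S$. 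Replacing $g$ by $g_0$ in a tube creates scalar curvature you cannot control, because the derivatives of $g$ are unbounded as one approaches $\mathcal S$.
\item The twist $D+\tfrac12c(d\log u)=u^{-1/2}Du^{1/2}$ with $u\sim r^\alpha$ is only a change of $L^2$ weight. It leaves the regular part incomplete and adds terms $\Delta_g\log u$, $|d\log u|^2$ of size $r^{-2}$ whose sign you cannot control.
\item With almost flat twisting bundles the Lichnerowicz identity carries a curvature error, so you never get $\int(|\nabla\psi|^2+\tfrac{R}{4}|\psi|^2)=0$. Passing from $R\geq0$ to rigidity needs a separate mechanism.
\end{enumerate}

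\textbf{What the paper does instead.} The Kazdan-type dichotomy (Theorem~\ref{dichotomy}) uses only scalar elliptic theory and zero $W^{1,2}$-capacity. It reduces $\operatorname{Ric}\not\equiv0$ to $R(g)\geq\kappa>0$ on the regular part. The paper then conformally blows up rather than using a weight that keeps the regular part incomplete: $g'=u^{2a}g$ with $u\to\infty$ along $\mathcal S$ (Proposition~\ref{simplical-blow-up}) makes $\mathbb T^n\setminus\mathcal S$ complete, so Callias-type index theory on complete manifolds applies. The gradient terms of the conformal factor are absorbed into the weighted scalar curvature $R_{\mu,f}$, with $f=b\log u$ and $\mu>\frac{n-1}{n}$. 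The refined estimate of Lemma~\ref{lemma:Lichnerowicz} still gives an obstruction in this weighted setting (Theorem~\ref{thm:Llarull}). The bound $R_{\mu,f}(g')\geq Cu^{-2a}d(\cdot,\mathcal S)^{-2}$ absorbs the error made when the enlargeability maps are deformed to be constant near $\mathcal S$.

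\textbf{Step 3.} There is a second gap here. Parallel (twisted) spinors give at best $\operatorname{Ric}=0$, not flatness. A local isometry also gives only an upper Lipschitz bound for the developing map, not injectivity near $\mathcal S$. In the paper, going from $\operatorname{Ric}=0$ on the regular part to a flat torus is a substantial step of its own (Proposition~\ref{prop:harmonicMap}). It uses harmonic maps to $\mathbb S^1$, H\"older continuity from De Giorgi--Nash--Moser, and the Cheng--Yau bound $|\nabla u|\leq Cr^{\alpha-1}$. It then applies Bochner with the refined Kato inequality against cutoffs adapted to the Minkowski dimension of $\mathcal S$, which forces $\operatorname{Hess}u=0$.
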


In fact, in our Theorem \ref{A}, the singular set $\Sing$ is allowed to be an embedded simplicial complex. The metric and dimensional assumptions in Conjecture \ref{conj:Schoen} are essentially sharp. Without a uniform lower bound on the metric, constructions based on the negative-mass Schwarzschild metric give counterexamples on tori. In codimension two, on the other hand, cone angles greater than $2\pi$ can obstruct removability of singularity even for metrics that are uniformly equivalent to the Euclidean metric \cite[Sections 8.2--8.3]{LiMantoulidis19}. Thus, one cannot allow either arbitrary degeneration of the metric or  codimension-two singularities. From a technical standpoint, the assumption on codimension being at least three guarantees that $\mathcal S$ has zero $W^{1,2}$-capacity, allowing weak elliptic arguments to pass across the singular set.

We remark that Schoen’s conjecture \ref{conj:Schoen} is false in full generality. Cecchini--Frenck--Zeidler constructed counterexamples to Schoen's conjecture on a class of simply connected spin manifolds \cite{CecchiniFrenckZeidler24}. Nevertheless, it remains a very interesting problem to determine under what additional assumptions the conjecture does hold, possibly  subject to further topological constraints on the underlying manifold \(M\). Several positive results are known. In dimension three, Conjecture~\ref{conj:Schoen} was established by Li--Mantoulidis \cite{LiMantoulidis19}, while in dimension four Kazaras proved it for tolerable singular sets $S$ \cite{Kazaras24}. Llarull-type rigidity theorems for $L^\infty$-metrics with certain singular sets have also been obtained by Chu--Lee--Zhu \cite{ChuLeeZhu24} and Wang--Xie \cite{WangXie24}. For the special class of conical metrics, scalar curvature rigidity has been established by Dai--Sun--Wang \cite{DaiSunWang24,DaiSunWang25spin} and Jovanovic--Wang \cite{JovanovicWang25}.

For tori, Wang--Xie \cite{WangXie24} partially confirmed Conjecture \ref{conj:Schoen} by proving that if $\operatorname{codim}(\mathcal S)\ge \frac{n}{2}+1$ and the induced map $i_*\colon \pi_1(\mathcal S)\to \pi_1(\mathbb T^n)$ is trivial, then any $L^\infty$-metric on $\mathbb T^n\setminus \mathcal S$ satisfying $R(g)\geq 0$ must be Ricci-flat. This result was subsequently strengthened by Dai--Wang--Wang--Wei \cite{DaiWangWangWei24}, who showed that under the codimension assumption $\operatorname{codim}(\mathcal S)\geq \max\left\{4,\frac{n}{2}+1 \right\}$, the resulting Ricci-flat metric extends smoothly across $\mathcal S$ to a flat metric on the entire torus.

In the asymptotically flat setting, Lee--LeFloch proved a positive mass theorem with distributional scalar curvature \cite{LeeLeFloch15}, and Dai--Sun--Wang treated isolated conical singularities \cite{DaiSunWang24Mass}. More recently, Khuri--Wang--Wang established positive mass in all dimensions for asymptotically flat $L^\infty$-metrics with subcritical singular sets, together with Euclidean rigidity under a stronger dimension bound \cite{KhuriWangWang26}. These developments, together with the corresponding results in the torus and spherical settings, identify these three settings as natural domains for scalar curvature rigidity in the presence of singularities.

A main new ingredient of this paper is a conformal blow-up that preserves positivity of a weighted scalar curvature. After a preliminary deformation, it suffices to rule out metrics with uniformly positive scalar curvature on the regular part. We make the regular part complete by conformally blowing up the metric near $\mathcal S$, while controlling the following weighted scalar curvature 
\[
  R_{\mu,f}(g)
  =R(g)+2\Delta_g f-\mu|\nabla f|_g^2,
  \qquad \mu>\frac{n-1}{n}.
\]
The weight $f$ compensates for the curvature terms introduced by the conformal factor. The flexibility in choosing $\mu>(n-1)/n$  allows a blow-up strong enough to produce completeness in codimension three, while preserving the index obstruction. The case $\mu=1$ is the $P$-scalar curvature studied in related work, including Zhou--Zhu's weighted Llarull theorem \cite{ZhouZhu25}.

Our approach in fact applies to general closed singular sets, rather than only to simplicial subsets or submanifolds, provided that their Hausdorff dimension is sufficiently small. For such general singular sets, however, we impose the additional topological assumption that there exists a neighborhood $U$ of $\mathcal S\subset \mathbb T^n$ such that
\begin{equation}\label{eq:nonsurjective}
    \pi_1(U)\longrightarrow \pi_1(\mathbb T^n) \textup{ is not surjective. }
\end{equation}
 This condition allows us to construct two coverings of $\mathbb T^n$ that agree near $\mathcal S$ but carry different global index theoretic data. The relative index theorem then gives a nonzero higher index class, whereas positivity of the weighted scalar curvature forces the corresponding Callias-type operator to be invertible, hence a zero index class. The resulting contradiction gives the desired obstruction and, together with the preliminary deformation, implies Ricci-flatness on the regular part; see Theorem~\ref{B}. This index-theoretic argument builds on the methods of \cite{XieYu14,Wang25}.

For \emph{simplicial singular sets}, the above topological assumption \eqref{eq:nonsurjective} can be removed by refining the conformal blow-up. When $\dim \mathcal S\leq n-3$, we construct a complete conformal metric $g'=u^{2a}g$ on the regular part satisfying
\[
  R_{\mu,f}(g')\geq C u^{-2a}d_g(\,\cdot\,,\mathcal S)^{-2}
\]
near $\mathcal S$, while keeping the curvature error arbitrarily small away from $\mathcal S$. This lower bound absorbs the errors introduced when deforming a map to the sphere so that it becomes constant near $\mathcal S$. It allows us to prove the following extension of Llarull's scalar curvature rigidity theorem for spheres \cite{Llarull98} to the setting of $L^\infty$-metrics.

\begin{theorem}\label{C}
Let $M^n$ be a closed connected spin manifold with $n\geq3$, and let $g$ be an $L^\infty$-metric smooth outside a finite Lipschitz embedded simplicial subcomplex $\mathcal S$ of dimension at most $n-3$. Suppose there is a Lipschitz map
\[
  \Phi:(M,g)\longrightarrow(\mathbb S^n,g_{\mathbb S^n})
\]
of nonzero degree, smooth away from $\mathcal S$, and area-non-increasing. If $R(g)\geq n(n-1)$ on $M\setminus \mathcal S$, then $\Phi$ is an isometry of metric spaces.
\end{theorem}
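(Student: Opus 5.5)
The plan is to argue in two stages. First, rule out strict inequalities on the regular part using the weighted Callias/Dirac index machinery. Second, upgrade the resulting equality on the regular part to a global metric isometry by a capacity argument across $\mathcal S$.

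\textbf{Stage one: the strict case.} Suppose, for contradiction, that at some point $p\in M\setminus\mathcal S$ either $R(g)(p)>n(n-1)$ or $\Phi$ is strictly area-decreasing at $p$. After a small smooth perturbation of $g$ supported near $p$, in the spirit of Llarull's argument, we may arrange
\[
R(g)>n(n-1)\,\|\Lambda^2 d\Phi\|
\]
strictly on a neighbourhood $V\ni p$, while keeping $\ge$ elsewhere on $M\setminus\mathcal S$.

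Next, deform $\Phi$ to a map $\Phi'$ that is constant near $\mathcal S$. Since $\dim\mathcal S\le n-3$, we can take $\Phi'$ to agree with $\Phi$ outside a tubular neighbourhood $N_\delta$ of $\mathcal S$. Inside $N_\delta$ we cone off to a point; this is possible because $\Phi|_{N_\delta}$ is homotopic to a map into a small set, using the Lipschitz bound. The cost is an error $|d\Phi'|\lesssim d_g(\cdot,\mathcal S)^{-1}$ in the shell, and the degree is unchanged. Then perform the conformal blow-up $g'=u^{2a}g$ with weight $f$ described in the introduction. This gives a complete metric on $M\setminus\mathcal S$ with
\[
R_{\mu,f}(g')\ge C u^{-2a}d_g(\cdot,\mathcal S)^{-2}
\]
near $\mathcal S$, and with arbitrarily small error on $\{d_g\ge\delta\}$.

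On $(M\setminus\mathcal S,g')$, twist the spinor Dirac operator by $\Phi'^*S^+(\mathbb S^n)$ and add the weight, giving a Callias/Witten-type operator $e^{f/2}D e^{-f/2}$-style. The weighted Lichnerowicz formula, valid for $\mu>(n-1)/n$, bounds the operator below by
\[
\tfrac14\Bigl(R_{\mu,f}-n(n-1)\,\|\Lambda^2 d\Phi'\|_{g'}\Bigr)+\ldots .
\]
Near $\mathcal S$ the large term $u^{-2a}d^{-2}$ absorbs the curvature of the twisting bundle, since $|d\Phi'|^2_{g'}\lesssim u^{-2a}d^{-2}$ once $C$ is large. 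In the middle region the strict inequality on $V$, together with smallness of the errors, gives a positive lower bound on a neighbourhood of $p$. This yields invertibility, or at least a zero $L^2$-kernel, of the operator on the complete manifold.

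On the other hand, since $\Phi'$ is constant near $\mathcal S$, the twisted bundle is trivial there. The relative index theorem then compares $(M\setminus\mathcal S,g',\Phi')$ with $M$ equipped with the same data. The index equals
\[
\deg\Phi\cdot\chi(\mathbb S^n)\neq0
\]
for even $n$; for odd $n$ we pass to $M\times S^1$ or use the suspension trick as in Llarull. This contradicts invertibility.

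\textbf{Stage two: from the regular part to global isometry.} Stage one gives $R(g)=n(n-1)$ and shows that $\Phi$ is a local isometry on $M\setminus\mathcal S$. The rigidity part of Llarull's argument, via parallel twisted spinors on the regular part, gives $|d\Phi|\equiv1$ there. Because $\mathcal S$ has codimension $\ge3$, the set $M\setminus\mathcal S$ is connected and $\mathcal S$ has zero capacity. Hence any two points can be joined by curves avoiding $\mathcal S$ whose $g$-length approximates $d_g$; this needs $g\in L^\infty$ and a Lipschitz simplicial $\mathcal S$ to control the length of detours. So $\Phi$ is 1-Lipschitz, being distance non-increasing along such curves.

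The map $\Phi$ is then a local isometric covering on the regular part. Since $\mathbb S^n$ minus a codimension-$\ge3$ set is simply connected, it is a degree-one isometry onto its image. Its inverse is again 1-Lipschitz by the same detour argument on the sphere side, so $\Phi$ is a metric isometry.

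\textbf{Main obstacle.} I expect the hardest step to be the simultaneous construction in stage one. The conformal factor $u$, the weight $f$ and the deformation $\Phi'$ must be chosen so that the near-$\mathcal S$ lower bound dominates the twisting curvature with uniform constants. At the same time, the errors in the region $\{d_g\sim\delta\}$ must stay smaller than the strict gain concentrated near $p$. I would first try $u=d_g^{-1}$ and $f=b\log u$ with $\mu$ close to $(n-1)/n$. The plan is to verify the inequality via the codimension-three Hessian estimate $\Delta d^{2-k}\ge0$ for the distance to a $k$-codimensional simplicial set.
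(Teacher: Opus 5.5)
\textbf{There is a genuine gap in Stage one.} A perturbation supported near one point $p$ gives strict inequality only on $V\ni p$. The conformal blow-up, however, does not just introduce a ``small'' error: it introduces an error of indefinite sign on the cutoff shell around $\mathcal S$, far from $p$. With $w=1+\tau\chi\psi_{\mathcal S}$ and the weight chosen to kill the gradient term,
$w^{2a}R_{\mu,f}(g')=R(g)+2|b-a(n-1)|\,\tau\,\bigl(-\Delta_g(\chi\psi_{\mathcal S})\bigr)/w$.
Here $-\Delta_g(\chi\psi_{\mathcal S})$ must be negative somewhere in $U_2\setminus U_1$. Otherwise $\chi\psi_{\mathcal S}\ge0$ would be superharmonic on the connected set $M\setminus\mathcal S$ and vanish outside $U_2$, hence vanish identically by the strong minimum principle.

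On that shell $\Phi'=\Phi$. If you only know $R(g)\ge 2|\wedge^2 d\Phi|_{\tr}$ there, you get $R_{\mu,f}(g')-2|\wedge^2 d\Phi'|^{g'}_{\tr}\ge -C\tau u^{-2a}$, which can be negative. Then neither invertibility nor a trivial $L^2$-kernel via unique continuation follows.

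The missing ingredient is the global reduction of Theorem~\ref{dichotomy-Llarull}. Take the first eigenfunction of $-\Delta_g+\frac{n-2}{4(n-1)}\varphi\bigl(R(g)-n(n-1)|\wedge^2 d\Phi|_g\bigr)$ on the whole $L^\infty$-manifold, with $\varphi$ a cutoff on a ball around $p$. Because $\mathcal S$ has zero $W^{1,2}$-capacity (Propositions~\ref{int-by-part} and~\ref{positive-solu}), this eigenfunction is bounded and bounded below across $\mathcal S$. Conformally changing by it gives a new $L^\infty$-metric with a \emph{uniform} gap $R(g)-2|\wedge^2 d\Phi|_{\tr}\ge\kappa>0$ on all of $M\setminus\mathcal S$. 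The blow-up error is then chosen $\le\kappa/2$, and Theorem~\ref{thm:Llarull} applies with strict positivity everywhere.

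\textbf{Two further problems.}

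First, in Stage two you appeal to parallel twisted spinors on the regular part. But $(M\setminus\mathcal S,g)$ is incomplete and carries no index theory, so no such spinors are available. Instead, state the strictness alternative with the trace quantity $\sum_{i\neq j}\lambda_i\lambda_j$ ($\lambda_i$ the singular values of $d\Phi$), not as ``$\Phi$ strictly area-decreasing at $p$''. If it fails everywhere, then together with $R\ge n(n-1)$ and $\lambda_i\lambda_j\le 1$ it forces $\lambda_i\lambda_j=1$ for all $i\neq j$. Since $n\ge3$, $d\Phi$ is then pointwise isometric, with no spinors needed.

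Second, the concrete blow-up you plan to try cannot produce the $d^{-2}$ term. For an $L^\infty$-metric there is no control of $\Delta_g d_g(\cdot,\mathcal S)$ near $\mathcal S$. Even in the flat model, $\Delta d^{-s}=s(s+2-k)\,d^{-s-2}$ near a codimension-$k$ plane. So in codimension exactly three, $u=d^{-1}=d^{2-k}$ is harmonic and contributes nothing.

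The paper instead uses $\psi_{\mathcal S}=\sum_j\int_{\sigma_j}\psi_p\,d\mathcal H^{\ell_j}(p)$ with $-\Delta_g\psi_p=d_g(\cdot,p)^{-\alpha_j}$ and $\alpha_j=\ell_j+3-\eta$. This needs only two-sided Green's function bounds. It yields $\psi_{\mathcal S}\asymp d^{\eta-1}$ and $-\Delta_g\psi_{\mathcal S}/\psi_{\mathcal S}\gtrsim d^{-2}$, with completeness from $a>1/(1-\eta)$.

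Also, the constant $C$ in $R_{\mu,f}(g')\ge Cu^{-2a}d^{-2}$ is fixed by this construction; you cannot take it large. The absorption near $\mathcal S$ comes from making the deformation of $\Phi$ slow. Pick $q\notin\Phi(\mathcal S)$, so that a neighbourhood of $\mathcal S$ is mapped away from $B(q,r_0)$. Compose a collapse of $\mathbb S^n\setminus B(q,r_0)$ onto the antipode $q^*$ with a cutoff in $-\log d_{\mathcal S}$ spread over a long interval. Then $n(n-1)|d\Psi|^2_{g'}$ is a small fraction of $Cu^{-2a}d^{-2}$.
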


For a closed spin enlargeable manifold, the same strategy, applied to the contracting maps arising from enlargeability, shows that any $L^\infty$-metric that is smooth away from a simplicial singular set of codimension at least three and has nonnegative scalar curvature on the regular part must be Ricci-flat there. In particular, any such $L^\infty$-metric on a torus $\mathbb T^n$ is Ricci-flat away from the singular set.

To complete our proof of Schoen's conjecture for tori, we prove that if a Ricci-flat metric is smooth away from a singular set $\mathcal S$ of codimension at least three, or more generally if its Minkowski dimension $\dim_{\mathcal M}\mathcal S\leq n-3+\frac{1}{(n-1)}$, then it extends across $\mathcal S$ to a smooth Ricci-flat, in fact flat, metric on $\mathbb T^n$. Dai--Wang--Wang--Wei \cite{DaiWangWangWei24} established the corresponding conclusion in codimension at least four using the $\mathrm{RCD}(0,n)$ structure of the singular space. We treat the remaining codimension-three case by constructing $n$ harmonic maps to $\mathbb S^1$ representing independent cohomology classes. De Giorgi--Nash--Moser theory, together with gradient estimates on the regular part, controls their behavior near $\mathcal S$. The Bochner formula, combined with a refined Kato inequality, then forces their Hessians to vanish. Their differentials therefore form a parallel frame of the cotangent bundle, which gives a global identification with a flat torus, hence the required smooth extension across $\mathcal S$.

The paper is organized as follows. In Section 2 we establish the elliptic and conformal deformation results for \(L^\infty\)-metrics. In Section 3 we prove the weighted scalar curvature obstructions for punctured tori and the weighted Llarull theorem using index theory. Section 4 develops the conformal blow-up construction, with quantitative curvature estimates near simplicial singular sets. Section 5 combines these tools to establish the \(L^\infty\) Llarull theorem and the scalar curvature obstructions for tori and spin enlargeable manifolds. Section 6 completes the torus rigidity argument by constructing harmonic coordinates and proving removability of the singular set. The appendix contains the potential estimates used in Section 4.

    \section{Conformal change for $L^\infty$-metric}
In this section we develop the elliptic theory needed to deform
$L^\infty$-metrics whose singular sets have codimension greater than two.
We apply this theory to obtain a scalar-curvature dichotomy and an analogous
statement in the Llarull setting.

\begin{definition}\label{linfinity}
Let $M^n$ be a smooth manifold equipped with a smooth Riemannian
metric $g_0$. A measurable symmetric $(0,2)$-tensor $g$ is called an \textup{$L^\infty$-metric} with respect to $g_0$ if every point $x\in M^n$ admits a neighborhood $V_x$ and
a constant $\Lambda_x\geq 1$ such that
\begin{equation}\label{Lambda}
\Lambda_x^{-1} g_0 \leq g \leq \Lambda_x g_0
\qquad \text{a.e. on } V_x
\end{equation}
as quadratic forms. We say that $(M^n,g)$ is a \textup{complete $L^{\infty}$-Riemannian manifold} if $g_0$ is complete and
\eqref{Lambda} holds with a uniform constant $\Lambda\geq 1$. The \textup{regular set} of an $L^\infty$-metric $g$ consists of those points having a neighborhood on which $g$ is smooth (i.e. $g$ admits a smooth representative), and the \textup{singular set}
is the complement of the regular set.
\end{definition}

The main result of this section is the following dichotomy for non-negative scalar curvature, which generalizes a well-known result of Kazdan in the smooth setting \cite{kazdan}.

\begin{theorem}\label{dichotomy}Let $(M^n,g)$ be a closed
$L^\infty$-manifold. Assume that the singular set $\mathcal{S}$ satisfies the Minkowski dimension upper bound $\dim_{\mathcal M}\mathcal{S}< n-2$.
Suppose that $R(g)\geq 0$ on $M^n\setminus \mathcal{S}$. Then the following dichotomy holds true:
\begin{itemize}
\item[(1)] either the Ricci curvature vanishes on $M^n\setminus \mathcal{S}$,

\item[(2)] or there is a $L^\infty$-metric $g'$ on $M^n$ such that $g'$ is smooth on $M^n\setminus \mathcal{S}$ and its scalar curvature $R(g')$ is bounded blow by a positive constant on $M^n\setminus \mathcal{S}$. 
\end{itemize}
\end{theorem}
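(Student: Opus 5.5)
We follow Kazdan's argument, replacing smooth elliptic theory by weak $W^{1,2}$ theory on $M$ and using that a set with $\dim_{\mathcal M}\mathcal S<n-2$ has zero $W^{1,2}$-capacity. Concretely, for $\varepsilon>0$ there are cutoffs $\eta_\varepsilon\in C^\infty_c(M\setminus\mathcal S)$ with $0\le\eta_\varepsilon\le1$, $\eta_\varepsilon\to1$ pointwise off $\mathcal S$, and $\|\nabla\eta_\varepsilon\|_{L^2}\to0$. We get them by taking $\eta_\varepsilon$ equal to $0$ on the $\varepsilon$-neighborhood $N_\varepsilon$ of $\mathcal S$ and equal to $1$ off $N_{2\varepsilon}$, with $|\nabla\eta_\varepsilon|\lesssim\varepsilon^{-1}$. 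The Minkowski bound gives $|N_{2\varepsilon}|\lesssim\varepsilon^{n-d}$ with $d<n-2$, hence $\int|\nabla\eta_\varepsilon|^2\lesssim\varepsilon^{n-d-2}\to0$. Since $g$ is uniformly equivalent to $g_0$, all Sobolev norms may be computed with either metric.

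Assume (1) fails, so $\Ric(g)\ne0$ at some $x_0\in M\setminus\mathcal S$. The first step is Kazdan's Ricci deformation. Take a smooth symmetric $2$-tensor $h$ compactly supported in a small ball $B\subset M\setminus\mathcal S$ around $x_0$, with $h=\phi\,\Ric(g)$ and $\phi\ge0$ a bump function. For $g_t=g-th$ the first variation of scalar curvature is
\[
\dot R=-\Delta(\tr h)+\operatorname{div}\operatorname{div}h-\langle \Ric,h\rangle .
\]
We need a pairing against positive test functions; since $g_t$ differs from $g$ only inside $B$, only a compactly supported change is involved. Rather than tracking $R$ directly, we study the conformal Laplacian $L_t=-a_n\Delta_{g_t}+R(g_t)$, with $a_n=\frac{4(n-1)}{n-2}$, and its first eigenvalue
\[
\lambda_1(t)=\inf\Big\{\int_M\big(a_n|\nabla u|^2_{g_t}+R(g_t)u^2\big)\,dV_{g_t} : \|u\|_{L^2(g_t)}=1\Big\},
\]
with the infimum over $W^{1,2}(M)$. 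The integrand makes sense because $R(g_t)\ge0$ off $\mathcal S$. Note that $R(g_t)$ is bounded on $B$, but off $B$ it equals $R(g)$, which is only known to be nonnegative and smooth; possibly it is unbounded near $\mathcal S$. We therefore take the form domain to be the closure of $C^\infty_c(M\setminus\mathcal S)$, which by the capacity argument is all of $W^{1,2}$ intersected with finiteness of $\int R u^2$.

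The second step is to show $\lambda_1(0)\ge0$, with equality attained only by constants, and only if $R\equiv0$. If $R(g)\not\equiv0$ then $\lambda_1(0)>0$ directly: a minimizing sequence converging to a constant would force $\int R\,u^2\to c\int R>0$. If $R\equiv0$, the constant function $u=\mathrm{vol}^{-1/2}$ realizes $\lambda_1(0)=0$. The perturbation then gives
\[
\frac{d}{dt}\Big|_{t=0}\lambda_1(t)\;\ge\;\frac{1}{\mathrm{vol}}\int\phi\,|\Ric|^2\,dV>0 .
\]
This is the standard Kazdan computation: the divergence terms integrate to zero against constants since everything is compactly supported in $B$. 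It is justified by a Hellmann–Feynman / upper-semicontinuity argument for the infimum, with lower-order control because the perturbation is smooth and supported in $B$. Either way, for small $t>0$ we obtain $\lambda_1(t)>0$, and $g_t$ is still an $L^\infty$-metric smooth off $\mathcal S$ with $R(g_t)\ge0$ off $B$.

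The third step is the conformal change. Let $\lambda=\lambda_1(g_t)>0$. We seek the first eigenfunction $u\in W^{1,2}$, $u>0$, of $L_t$. Existence follows from Rellich compactness together with lower semicontinuity of $\int R u^2$ by Fatou, since $R\ge0$ off $B$. Positivity of $u$ comes from $|u|$ also being a minimizer plus De Giorgi–Nash–Moser: Harnack for $-a_n\Delta u+Ru=\lambda u$ with $R\ge0$ gives a positive lower bound of the form $\inf u>0$, because $u$ is a supersolution of $-a_n\Delta u\ge -C u$. The upper bound $\sup u<\infty$ holds because $u$ is a subsolution of $-a_n\Delta u\le\lambda u$. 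Off $\mathcal S$, $u$ is smooth by elliptic regularity. The equation holds weakly across $\mathcal S$ because test functions can be cut off with $\eta_\varepsilon$. Then $g'=u^{4/(n-2)}g_t$ is an $L^\infty$-metric smooth on $M\setminus\mathcal S$, with
\[
R(g')=u^{-\frac{n+2}{n-2}}L_tu=\lambda u^{-\frac4{n-2}}\ge\lambda(\sup u)^{-\frac4{n-2}}>0 ,
\]
which is (2).

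The main obstacle is the second step, namely making the first-eigenvalue and variation argument rigorous when $R(g)$ may be unbounded near $\mathcal S$. This concerns existence of minimizers and differentiability of $\lambda_1(t)$. To handle it, we would first prove everything for the truncated potentials $\min(R,k)$, where the $L^\infty$ theory of the previous paragraphs applies. We would then pass $k\to\infty$ by monotone convergence of the eigenvalues; positivity is preserved because the eigenvalues increase in $k$.
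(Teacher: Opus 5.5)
Your proof has a genuine gap in the third step, at the claim $\inf u>0$. The equation $-a_n\Delta u+R(g_t)u=\lambda u$ reads $-a_n\Delta u=(\lambda-R(g_t))u$. So $u$ is a supersolution of $-a_n\Delta u\ge -Cu$ only if $R(g_t)\le \lambda+C$. The hypothesis $R\ge0$ gives you the \emph{subsolution} inequality, which is why $\sup u<\infty$ holds, but it gives nothing toward a lower bound.

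You explicitly allow $R(g)$ to be unbounded near $\mathcal S$, and it can be. An edge modeled on $dr^2+c^2r^2g_{\mathbb S^2}+g_{\mathbb R^{n-3}}$ with $0<c<1$ is an $L^\infty$-metric with $R=2(1-c^2)c^{-2}r^{-2}$. This $R$ is not in $L^q$ for any $q>n/2$, so weak Harnack is unavailable. A comparison with $r^{\beta}$ shows that the $W^{1,2}$ first eigenfunction of the full conformal Laplacian tends to $0$ along the edge, like $r^{\beta}$ with $\beta>0$ solving $a_n\beta(\beta+1)=2(1-c^2)c^{-2}$. Then $g'=u^{4/(n-2)}g_t$ degenerates along $\mathcal S$ and is not an $L^\infty$-metric, so item (2) is not obtained. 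Your remedy in the last paragraph, truncating to $\min(R,k)$ and letting $k\to\infty$, targets the eigenvalue theory. In the limit it reinstates exactly this degeneration.

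The missing idea is that you never need the full $R$ in the equation. Take a bounded potential $0\le\mathbf c\le R$ with $\mathbf c\not\equiv0$, for instance $\min(R,k)$ with $k$ fixed, or the paper's choice $\varphi R$ with $\varphi$ a cutoff in a ball of the regular part where $R>0$. Solve $-a_n\Delta u+\mathbf c u=\lambda u$; here $\lambda>0$ by your constant-function argument. Then
\[
R(g')=u^{-\frac{4}{n-2}}\bigl(\lambda+R-\mathbf c\bigr)\ \ge\ \lambda\,u^{-\frac{4}{n-2}},
\]
since the discarded term $R-\mathbf c\ge0$ only helps. With a bounded potential, De Giorgi--Nash--Moser gives $0<\operatorname{ess\,inf}u\le\sup u<\infty$. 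This is the paper's first case, and it needs no Ricci deformation.

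In the remaining case $R\equiv0$ on $M\setminus\mathcal S$, the deformation $g_t=g-t\varphi\Ric_g$ makes $R(g_t)$ smooth and supported in $B$. The potential is then automatically bounded, and your first-variation argument for $\lambda_1(t)$ goes through as in the smooth setting. Splitting into these two cases, as the paper does, removes the difficulty you flagged in the second step entirely.
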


We also use the following Llarull type  dichotomy. An analogous result also appeared in \cite{WangXie24}.  

\begin{theorem}\label{dichotomy-Llarull} Let $(M^n, g)$ be a closed $L^\infty$-manifold. Assume that the singular set $\mathcal{S}$ satisfies the Minkowski dimension upper bound $\dim_{\mathcal{M}}\mathcal{S}< n-2$. Suppose that $\Phi: (M, g)\rightarrow (\mathbb{S}^n, g_{\mathbb{S}^n})$ satisfies that on $M\setminus \mathcal{S}$
\begin{equation*}
R(g)\geq n(n-1) |\Bigwedge^2 d\Phi|_g.
\end{equation*}
Then the following dichotomy holds true: 
\begin{itemize}
    \item[(1)] $\Phi$ is an isometry, 
    \item[(2)] or there is a $L^\infty$-metric $g'$ on $M$ such that $g'$ is smooth on $M\setminus \mathcal{S}$ and $R(g')-n(n-1)|\Bigwedge^2d\Phi|_{g'}$ is bounded below by a positive constant on $M^n\setminus \mathcal{S}$
\end{itemize}
\end{theorem}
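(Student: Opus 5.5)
We follow the Kazdan-type strategy used for Theorem~\ref{dichotomy}, adapted to the Llarull inequality as in \cite{WangXie24}. Write $h:=R(g)-n(n-1)|\Bigwedge^2 d\Phi|_g\geq 0$ on $M\setminus\mathcal S$. We distinguish two cases: $h\not\equiv 0$ somewhere on the regular part, or $h\equiv 0$ on $M\setminus\mathcal S$.

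\emph{Case $h\not\equiv 0$.} We use a conformal change $g'=u^{4/(n-2)}g$. Under this change $|\Bigwedge^2 d\Phi|_{g'}=u^{-4/(n-2)}|\Bigwedge^2 d\Phi|_g$, so
\[
R(g')-n(n-1)|\Bigwedge^2 d\Phi|_{g'}=u^{-\frac{n+2}{n-2}}\Bigl(-\tfrac{4(n-1)}{n-2}\Delta_g u+h\,u\Bigr).
\]
We therefore need a positive solution of $-\tfrac{4(n-1)}{n-2}\Delta_g u+hu=\lambda u$ with $\lambda>0$ and $u$ bounded above and below by positive constants. Equivalently, the first eigenvalue $\lambda_1$ of $L=-c_n\Delta_g+h$ should be positive. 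We set this up weakly on $W^{1,2}(M,g)$. The Minkowski bound $\dim_{\mathcal M}\mathcal S<n-2$ gives zero $W^{1,2}$-capacity, so smooth functions supported away from $\mathcal S$ are dense and the quadratic form $\int c_n|\nabla\varphi|^2+h\varphi^2$ is defined on the regular part. Here $h$ may be unbounded near $\mathcal S$ (only $h\geq0$ is known), so we first replace $h$ by a bounded truncation $\tilde h=\min(h,\eta)$ with $\tilde h\not\equiv 0$; since the resulting $g'$ satisfies $R(g')-n(n-1)|\Bigwedge^2d\Phi|_{g'}\geq u^{-4/(n-2)}\lambda$, this loses nothing. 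If $\lambda_1=0$, the minimizer is a constant, which forces $\int\tilde h=0$, a contradiction; hence $\lambda_1>0$. De Giorgi--Nash--Moser theory for the uniformly elliptic divergence-form operator with $L^\infty$ coefficients gives a positive H\"older eigenfunction with Harnack bounds. Elliptic regularity on $M\setminus\mathcal S$ makes $u$ smooth there, so $g'$ is an $L^\infty$-metric with the required strict positive lower bound. This gives (2).

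\emph{Case $h\equiv 0$.} Here we perturb the metric instead, in the spirit of Kazdan/Bourguignon. If $g$ is not ``rigid'', take a variation $g_t=g+t k$ with $k$ compactly supported in $M\setminus\mathcal S$. Choosing $k=-\Ric(g)$ cut off, or a suitable tensor involving $d\Phi$, makes $\frac{d}{dt}\int h_t u_t$ positive. The first eigenvalue of $-c_n\Delta_{g_t}+h_t$ then becomes positive for small $t$, and we conclude as in the first case. The variation of the eigenvalue is $\int (\dot h) u_0^2$ with $u_0$ constant, so we need $\int \dot h>0$ for some compactly supported $k$. Using $\dot R=-\Delta\tr k+\operatorname{div}\operatorname{div}k-\langle\Ric,k\rangle$ and the first variation of $|\Bigwedge^2d\Phi|_{g_t}$, this is possible unless $\Ric(g)$ equals a specific tensor built from $d\Phi$ on the regular part. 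In that rigid situation, equality holds pointwise in Llarull's algebraic inequality. Combined with Llarull's argument on the regular part, this gives that $d\Phi$ is a local isometry on $M\setminus\mathcal S$. Since $\mathcal S$ has codimension greater than two and $\Phi$ is Lipschitz with nonzero degree, $\Phi$ is a metric isometry. This gives (1).

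\emph{Main obstacle.} We expect the hardest step to be the second case. $|\Bigwedge^2 d\Phi|$ is only Lipschitz where singular values coincide, so its variation must be handled by a one-sided derivative bound. The rigid alternative then has to be promoted to a global isometry across $\mathcal S$, which uses the capacity-zero property once more to extend the length-preserving property of $\Phi$ along curves avoiding $\mathcal S$.
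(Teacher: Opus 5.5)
Your first case is essentially the paper's argument; the paper localizes the potential as $\varphi h$ rather than truncating it. One small repair is needed. Elliptic regularity with the potential $\min(h,\eta)$ gives only $u\in C^{2,\alpha}$ on $M\setminus\mathcal S$, not smooth. In fact $h$ itself is only Lipschitz, since the operator norm $|\Bigwedge^2 d\Phi|_g$ is not smooth where singular values coincide. Use instead a smooth $0\le\tilde h\le h$ compactly supported in the regular part, for example $\tilde h=c\varphi$ with $c>0$ a lower bound for $h$ on a small ball. The rest of Case 1 then goes through.

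The genuine gap is in your second case, at the step ``in that rigid situation \dots\ $d\Phi$ is a local isometry on $M\setminus\mathcal S$.'' A first-variation argument with compactly supported $k$ only produces a pointwise Euler--Lagrange identity between $\Ric(g)$ and $d\Phi$ on the regular part. A local identity of this kind cannot force $\Phi$ to be an isometry. Take $(\tor^n,g_{\mathrm{flat}})$, $\mathcal S=\emptyset$, and $\Phi$ constant. Then $h\equiv 0$, $\Ric=0$, and $\int\dot h\,dV=-\int\langle \Ric,k\rangle\,dV=0$ for every variation, so you are in your rigid case. Yet $\Phi$ is not an isometry, and by Schoen--Yau/Gromov--Lawson no $g'$ as in (2) exists. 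So the statement genuinely needs $M$ spin and $\deg\Phi\neq 0$. You invoke the degree only for the local-to-global step, but these hypotheses must enter exactly at the step you treat as local.

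The equality $R=n(n-1)|\Bigwedge^2 d\Phi|$ is not ``equality in Llarull's algebraic inequality'' in any sense that determines $d\Phi$. Llarull's conclusion comes from a nonzero harmonic spinor, produced by a nonvanishing index, on which both the Lichnerowicz estimate and the pointwise curvature estimate are saturated. No such spinor is available here: $g$ is singular on the closed manifold $M$, and $(M\setminus\mathcal S,g)$ is incomplete. In this paper the index theory is run only on the blown-up complete metric. That blow-up requires strict inequality to begin with, so it cannot be used to analyze the equality case.

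The paper does not carry out this case itself; it defers to \cite[Lemma 2.1]{WangXie24}. To complete your route you would need a genuinely global argument. When $h\equiv 0$ and $\Phi$ is not an isometry, it must use spin and nonzero degree to produce a deformation with strict inequality somewhere, after which your Case 1 applies. Separately, promoting a local isometry on $M\setminus\mathcal S$ to a metric isometry needs an actual argument, such as a covering/degree argument plus a length comparison across $\mathcal S$. The capacity remark alone does not give it.
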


\subsection{Existence of positive solution} When the singular set $\mathcal{S}$ is of Minkowski codimension greater than two, it has vanishing $W^{1,2}$-capacity. In particular,
it is invisible to $W^{1,2}$-analysis in the sense that $W^{1,2}(M^n\setminus\mathcal{S},g)$ agrees with $W^{1,2}(M^n,g)$, and integration by parts remains valid across $\mathcal S$. More precisely, we have the following result. 

\begin{proposition}\cite[Proposition 2.8]{Khuri-Wang-Wang}\label{int-by-part}
Let $(M^n,g)$ be a compact  $L^\infty$-manifold.
If the singular set $\mathcal{S}$ is compact and satisfies the Minkowski dimension upper bound
$\dim_{\mathcal M} \mathcal S < n-2$,
then the singular set has vanishing \(W^{1,2}\)-capacity with respect to \(g\).
In particular consider \(u\in W^{1,2}(M^n,g)\), and let
$v\in C^\infty(M^n\setminus \mathcal S)$ in addition to
\begin{equation}
v\in W^{1,2}(M^n,g),\qquad
|\nabla v|_g\in L^\infty(M^n\setminus \mathcal S),
\qquad
\Delta_g v\in L^2(M^n\setminus \mathcal S),
\end{equation}
then
\begin{equation}
\int_{M^n\setminus \mathcal S} u\,\Delta_g v\,dV_g
=
-\int_{M^n\setminus \mathcal S}
\langle \nabla u,\nabla v\rangle_g\,dV_g .
\end{equation}
\end{proposition}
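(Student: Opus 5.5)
We prove the two assertions in turn: first that $\mathcal S$ has zero $W^{1,2}$-capacity, using cutoff functions built from the distance to $\mathcal S$, and then the integration by parts formula, by integrating against those cutoffs and letting them tend to $1$.

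\textbf{Step 1: cutoff functions and capacity.} Since $g$ is uniformly equivalent to $g_0$ on the compact manifold, all $W^{1,2}$ norms and volume quantities for $g$ and $g_0$ agree up to constants depending only on $\Lambda$. We may therefore work with $g_0$ and $r=d_{g_0}(\cdot,\mathcal S)$. Fix $s$ with $\dim_{\mathcal M}\mathcal S<s<n-2$. The Minkowski bound gives, for small $\varepsilon$,
\[
\mathrm{Vol}(\{r<\varepsilon\})\le C\varepsilon^{n-s}.
\]
Define the Lipschitz cutoff $\phi_\varepsilon=\min\{1,\max\{0,\,r/\varepsilon-1\}\}$. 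Then $\phi_\varepsilon$ vanishes on $\{r<\varepsilon\}$, equals $1$ on $\{r>2\varepsilon\}$, and satisfies $|\nabla\phi_\varepsilon|\le C/\varepsilon$ on $\{\varepsilon\le r\le 2\varepsilon\}$. Consequently
\[
\int|\nabla\phi_\varepsilon|^2\le C\varepsilon^{-2}\varepsilon^{n-s}=C\varepsilon^{n-2-s}\to 0,
\]
and $\|1-\phi_\varepsilon\|_{L^2}\to 0$. Thus $1-\phi_\varepsilon$ is admissible for the capacity of $\mathcal S$ with $W^{1,2}$ norm tending to zero, so $\mathcal S$ has vanishing capacity.

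\textbf{Step 2: integration by parts.} Take $u\in W^{1,2}$. We first reduce to $u$ bounded: truncations $u_k=\max(-k,\min(u,k))$ converge to $u$ in $W^{1,2}$. Both sides of the identity are continuous in $u$ for the $W^{1,2}$ norm, since $\Delta v\in L^2$ and $|\nabla v|\in L^\infty$, so passing to the limit $k\to\infty$ is harmless. We also need smoothness: on the open set $M\setminus\mathcal S$, where $g$ is smooth, $u$ can be approximated locally by smooth functions. Hence it suffices to test against $\phi_\varepsilon u$, which is compactly supported in $M\setminus\mathcal S$. For such a compactly supported function, Green's formula gives
\[
\int \phi_\varepsilon u\,\Delta v=-\int\phi_\varepsilon\langle\nabla u,\nabla v\rangle-\int u\langle\nabla\phi_\varepsilon,\nabla v\rangle.
\]

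\textbf{Step 3: the limit $\varepsilon\to0$.} The first two terms converge to the desired quantities by dominated convergence, because $u\Delta v\in L^1$ and $\langle\nabla u,\nabla v\rangle\in L^1$. The error term is the main obstacle, and it is handled by Cauchy--Schwarz:
\[
\Bigl|\int u\langle\nabla\phi_\varepsilon,\nabla v\rangle\Bigr|\le\|\nabla v\|_{L^\infty}\,\|u\|_{L^2(\{r<2\varepsilon\})}\,\|\nabla\phi_\varepsilon\|_{L^2}.
\]
Here $\|\nabla\phi_\varepsilon\|_{L^2}\to0$ by Step 1, and $\|u\|_{L^2(\{r<2\varepsilon\})}\to0$ by absolute continuity of the integral. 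So the error term vanishes in the limit.

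This is exactly where both hypotheses are used. The assumption $|\nabla v|\in L^\infty$ is what allows $\nabla v$ to be pulled out of the integral. The condition $s<n-2$ is what makes the capacity estimate $\|\nabla\phi_\varepsilon\|_{L^2}\to 0$ hold. Taking $\varepsilon\to0$ in the formula of Step 2 yields the stated identity.
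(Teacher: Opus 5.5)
Your proof is correct and follows the standard route the paper relies on: the paper cites Khuri--Wang--Wang for this statement rather than proving it, and it uses the same Minkowski-volume cutoff construction in Lemma~\ref{lemma:cut-off}. As in that route, you take cutoffs with $\int|\nabla\phi_\varepsilon|^2\lesssim\varepsilon^{n-2-s}\to0$, apply Green's formula to $\phi_\varepsilon u$ supported in the regular set, and kill the error term by Cauchy--Schwarz using $|\nabla v|\in L^\infty$. The truncation to bounded $u$ in Step 2 is never used and can be deleted; it would only matter if you wanted to replace the $L^\infty$ bound on $\nabla v$ by $\nabla v\in L^2$.
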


\begin{remark} Proposition \ref{int-by-part} remains valid if the Minkowski dimension assumption is replaced by the Hausdorff dimension assumption $\dim_{\mathcal{H}}\mathcal S< n-2$. 

Consequently, all subsequent arguments in this section that rely on the dimension assumption only through Proposition~\ref{int-by-part} remain valid under the Hausdorff dimension hypothesis. In particular, Theorems~\ref{dichotomy} and \ref{dichotomy-Llarull} also
hold for $\dim_{\mathcal H}\mathcal S<n-2 .$  
\end{remark}

\vspace{3mm}

Consider the eigenvalue and the eigenfunction problem
\begin{equation}
    \lambda_1:=\inf\left\{\int_{M^n\setminus \mathcal{S}} |\nabla u|^2+ \textbf{c} u^2  ~{\big|}~ ~u\in W^{1,2}(M, g)~\text{ and }\int_{M\setminus \mathcal{S} }u^2=1\right\} 
\end{equation}
where $\textbf{c}$ is a smooth function compactly supported in $M\setminus \mathcal{S}$. The following result shows the existence and the positivity of the first eigenfunction. 

\begin{proposition}\label{positive-solu} Let $(M^n, g)$ be a closed $L^\infty$-manifold. Assume that the singular set $\mathcal{S}$ satisfies the Minkowski dimension upper bound $\dim_{\mathcal{M}}\mathcal{S}< n-2$. Suppose that $\textbf{c}\in C^\infty_{c}(M\setminus \mathcal{S})$. Then there is a positive function 
\[ u\in W^{1,2 }(M^n, g)\cap C^\infty(M^n\setminus \mathcal{S})\cap L^\infty(M) \] satisfying 
\begin{equation}\label{eignevalue-weak}
    -\Delta_g u+ \textbf{c}u=\lambda_1 u
\end{equation}and $u$ is bounded below by a positive constant in the sense of essential-infimum sense, that is 
$
    \operatorname*{ess\,inf}_{M^n} (u) >0 .
$
\end{proposition}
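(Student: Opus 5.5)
The approach is the direct method for the Rayleigh quotient on $W^{1,2}(M,g)$, followed by regularity: interior elliptic theory away from $\mathcal S$, and De Giorgi--Nash--Moser theory for $L^\infty$ coefficients across $\mathcal S$. Because $g$ is uniformly equivalent to $g_0$, $W^{1,2}(M,g)$ coincides with $W^{1,2}(M,g_0)$ with equivalent norms, and $\mathcal S$ has measure zero. By Rellich compactness, $W^{1,2}\hookrightarrow L^2$ is compact. Since $\mathbf c$ is bounded, the functional $\int|\nabla u|_g^2+\mathbf c u^2$ is bounded below on the unit $L^2$-sphere, so $\lambda_1>-\infty$. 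A minimizing sequence is bounded in $W^{1,2}$; a subsequence converges weakly in $W^{1,2}$ and strongly in $L^2$. Lower semicontinuity of the Dirichlet energy (in the metric $g$, a convex quadratic form with measurable bounded coefficients) then gives a minimizer $u$ with $\|u\|_{L^2}=1$. Replacing $u$ by $|u|$ does not change the quotient, so we may take $u\ge0$. The Euler--Lagrange equation gives the weak form of \eqref{eignevalue-weak}:
\[
\int\langle\nabla u,\nabla\varphi\rangle_g+\mathbf c u\varphi=\lambda_1\int u\varphi \quad\text{for all }\varphi\in W^{1,2}(M,g).
\]

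Next comes regularity. On $M\setminus\mathcal S$ the metric is smooth and $\mathbf c$ is smooth, so standard interior elliptic regularity (bootstrapping Schauder estimates) gives $u\in C^\infty(M\setminus\mathcal S)$. Globally, in local $g_0$-coordinates the weak equation reads
\[
\partial_i\bigl(\sqrt{\det g}\,g^{ij}\partial_j u\bigr)=\sqrt{\det g}\,(\mathbf c-\lambda_1)u,
\]
a divergence-form equation with measurable, uniformly elliptic coefficients and a bounded zeroth-order term. Moser iteration then gives local $L^\infty$ bounds in terms of the $L^2$ norm, and compactness of $M$ yields $u\in L^\infty(M)$. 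Note that the weak formulation across $\mathcal S$ is legitimate because the test functions range over all of $W^{1,2}(M,g)$. Proposition~\ref{int-by-part} (vanishing capacity) identifies this with the equation on $M\setminus\mathcal S$ and justifies passing between the two.

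The main point is the strict positive lower bound. Here I would use the weak Harnack inequality of De Giorgi--Nash--Moser for nonnegative supersolutions. Since $u\ge0$ and $(\mathbf c-\lambda_1)$ is bounded, $u$ satisfies $-\operatorname{div}(A\nabla u)+Ku\ge0$ for a constant $K$, and $L^\infty$ coefficients suffice. The weak Harnack inequality gives, on each coordinate ball $B$,
\[
\operatorname*{ess\,inf}_{B/2}u\ \ge\ c\,\|u\|_{L^1(B)}.
\]
Hence the set where $u$ is locally not essentially positive is both open and closed. Connectedness of $M$, which I assume as is implicit for closed manifolds here (otherwise apply the argument componentwise), together with $u\not\equiv0$ then forces $\operatorname*{ess\,inf}_B u>0$ on every ball of a finite cover. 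This gives $\operatorname*{ess\,inf}_M u>0$, and in particular $u>0$ on $M\setminus\mathcal S$, where $u$ is continuous. The step needing the most care is this Harnack/connectedness argument with merely measurable coefficients near $\mathcal S$. Since the Harnack inequality is insensitive to $\mathcal S$ once the weak equation holds on all of $M$, I expect no genuine obstacle beyond checking that the Euler--Lagrange equation holds against all $W^{1,2}(M)$ test functions, which it does by construction.
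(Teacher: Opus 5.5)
Your proposal is correct and follows the same route as the paper. The paper cites Gilbarg--Trudinger for each step: existence of a nonnegative first eigenfunction by minimization (Theorem 8.38), De Giorgi--Nash--Moser regularity for the $L^\infty$ bound together with interior smoothness off $\mathcal S$, and the weak Harnack inequality (Theorem 8.18) for the positive lower bound. Your open--closed argument makes explicit the connectedness the paper uses tacitly. Connectedness is genuinely required: on a disconnected $M$ the global minimizer can vanish identically on a component, so a ``componentwise'' version would change $\lambda_1$ rather than rescue the statement.
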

\begin{proof}Since $g$ is uniformly equivalent to a smooth background metric, it follows from \cite[Theorem 8.38]{GT-Book} and the regularity theory \cite[Corollary 8.22]{GT-Book} that there exists a non-negative solution $u\in W^{1,2 }(M^n, g)\cap L^\infty(M)\cap C^\infty(M\setminus \mathcal{S})$ to \eqref{eignevalue-weak}. The weak Harnack inequality \cite[Theorem 8.18]{GT-Book} then shows that $u$ must be strictly positive away from the singular set. Hence \(u\) has a representative
which is strictly positive on \(M^n\), and
$\operatorname*{ess\,inf}_{M}u>0
$.
\end{proof}

\subsection{Proof of Theorems \ref{dichotomy} and \ref{dichotomy-Llarull}}

We now prove Theorem \ref{dichotomy}
\begin{proof}[The proof of Theorem \ref{dichotomy}] We first establish a weaker version of Theorem \ref{dichotomy} for scalar curvature. Suppose that there is a point $p\in M^n\setminus \mathcal{S}$ with $R(g)>0$. Then there is a small geodesic ball $B(p, 2r)\Subset M^n\setminus \mathcal{S}$ on which scalar curvature is strictly positive. Choose a cut-off function $\varphi\in C^\infty_c(B(p, 2r))$ with $\varphi=1$ on $B(p, r)$, and $0\leq\varphi\leq 1 $. Consider the following equation
\begin{equation}
    -\Delta_g u+\frac{n-2}{4(n-1)}\varphi R(g) u=\lambda_1 u
\end{equation}
where $\lambda_1>0$ is the first eigenvalue of $-\Delta+\frac{n-2}{4(n-1)}\varphi R(g)$. Proposition \ref{positive-solu} provides a positive solution $u\in C^\infty(M^n\setminus \mathcal{S})\cap W^{1,2}(M^n, g)\cap L^\infty (M^n)$. Moreover, $u$ is uniformly bounded below by a positive constant. Then the conformal change $g'=u^{\frac{4}{n-2}}g$ yields a $L^\infty$-manifold $(M, g')$ satisfying that 
\begin{equation}
    R(g')=u^{-\frac{4}{n-2}}\left( \lambda_1+(1-\varphi)R(g)\right)\geq \lambda_1(\sup_{x\in M} u)^{-\frac{4}{n-2}}>0. 
\end{equation}
We conclude that if the scalar curvature $R(g)$ is strictly positive at some point away from $\mathcal{S}$, then $g'$ is a candidate required  in item $(2)$ of Theorem \ref{dichotomy}. 

Now suppose that $R(g) = 0$ on $M^n\setminus \mathcal{S}$. Assume that the Ricci curvature  $\operatorname{Ric}_g(p)\neq 0$ for some $p\in M^n\setminus \mathcal{S}$. Following the classical strategy of Schoen--Yau \cite{MR526976}, we shall show that there exists an $L^\infty$-metric $g'$ on $M^n$, smooth on $M^n\setminus \mathcal{S}$, whose scalar curvature is bounded below by a positive constant on $M^n\setminus \mathcal{S}$.  Choose a geodesic ball
\(B_{2r}(p)\Subset M^n\setminus\mathcal S\), and a cut-off function
\(\varphi\in C_c^\infty(B_{2r}(p))\) as above. For
\(t>0\) sufficiently small, define
\begin{equation}
        g_t:=g-t\varphi\,\operatorname{Ric}_g .
\end{equation}
Then \(g_t=g\) outside \(B_{2r}(p)\).Recall the linearization of the scalar curvature 
\begin{equation*}
        \left.\frac{d}{dt}\right|_{t=0}R(g)
        =
        -D R(g)(\varphi\operatorname{Ric}_g),
\quad 
        D_hR(g)
        =
        -\Delta_g(\operatorname{tr}_g h)
        +\operatorname{div}_g\operatorname{div}_g h
        -\langle \operatorname{Ric}_g,h\rangle_g .
\end{equation*}
Using the assumption that \(R(g)=0\) and integrating by parts gives 
\begin{equation}\label{tscalar}
        \int_{M^n} R(g_t)\,dV_{g_t}
        =
        t\int_{M^n} \varphi |\operatorname{Ric}_g|^2\,dV_g
        +O(t^2)>0
\end{equation}
for all sufficiently small \(t>0\). Now solve
\begin{equation}
    -\Delta_{g_t} u_t+\frac{n-2}{4(n-1)}R(g_t) u_t=\lambda_1(t) u_t
\end{equation}
It follows from \eqref{tscalar} that $\lambda_1(t)>0$. The same conformal change argument as above gives  a $L^\infty$-metric $g'_t$,  smooth on $M^n\setminus \mathcal{S}$, whose scalar curvature is bounded below by a positive constant on $M^n\setminus \mathcal{S}$.

Consequently, the only remaining possibility is that $\operatorname{Ric}_g=0$ on $M^n\setminus\mathcal{S}$. This completes the proof of the theorem.
\end{proof}

\begin{proof}[Proof of Theorem \ref{dichotomy-Llarull}]
We first establish a weaker version of Theorem \ref{dichotomy-Llarull}. 
Suppose that there is a point $p\in M^n\setminus \mathcal{S}$ with \[R(g)_p-n(n-1)|\Bigwedge^2d_p\Phi|_g>0.\]Then there is a small geodesic ball $B(p, 2r)\Subset M^n\setminus \mathcal{S}$ on which the left hand side of the above inequality is bounded below by a positive constant. Choose a cut-off function $\varphi\in C^\infty_c(B(p, 2r))$ with $\varphi=1$ on $B(p, r)$, and $0\leq\varphi\leq 1 $. Consider the following equation
\begin{equation}
    -\Delta_g u+ \frac{n-2}{4(n-1)}\varphi(R(g)-n(n-1)|\Bigwedge^2d\Phi|_g)u=\lambda_1 u
\end{equation} where $\lambda_1>0$ is the first eigenvalue. Using the positive solution from Proposition \ref{positive-solu} and the conformal change yields a $L^\infty$-metric $g'$ on $M^n$ satisfying that 
\begin{equation}
    R(g')-n(n-1)|\Bigwedge^2d\Phi|_{g'}>C>0.
\end{equation}

We consider the remaining case where $R(g)=n(n-1)|\Bigwedge^2d\Phi|_g$. Now the same argument of \cite[Lemma 2.1]{WangXie24} implies that,  unless$\Phi$ is an isometry, one can perform a conformal change to obtain a $L^\infty$-metric $g'$ on $M$ such that $g'$ is smooth on $M\setminus \mathcal{S}$ and $R(g')-n(n-1)|\Bigwedge^2d\Phi|_{g'}$ is bounded below by a positive constant on $M^n\setminus \mathcal{S}$. This finishes the proof. 
\end{proof}

    \section{$(\mu,f)$-Weighted scalar curvature}\label{sec:weightedSc}

Classical scalar-curvature extremality results were established for tori by Schoen-Yau   \cite{SchoenYau79} and Gromov-Lawson  \cite{GromovLawson80}, and  for spheres by Llarull 
\cite{Llarull98}. In this section, we prove analogous results for weighted
scalar curvature.

Following Perelman, we consider a family of weighted scalar curvatures associated with a smooth function $f$ and a positive parameter $\mu$:
\[
R_{\mu,f}(g)=R(g)+2\Delta f-\mu|\nabla f|^2.
\]
This quantity has also been studied in \cite{Fan08,Deng21,ZhouZhu25} and is
sometimes referred to as the \(P\)-scalar curvature when $\mu=1$. Throughout this section,
we focus on the range
$
\mu>\frac{n-1}{n}.
$
	\begin{definition}
		Let $(M,g)$ be a smooth Riemannian manifold. For $\mu>0$ and $f\in C^\infty(M)$, we define the $(\mu,f)$-weighted scalar curvature as
		$$R_{\mu,f}(g)=R(g)+2\Delta f-\mu|\nabla f|^2.$$
        %For our purposes, we focus on $\mu>\frac{n-1}{n}$.
	\end{definition}

We first establish an extremality result for punctured tori.

\begin{theorem}\label{thm:torus}
	Let $\mathcal{S}\subset \tor^n$ be a closed set satisfying that there is a neighborhood $U$ of $\mathcal{S}$ such that  
	\begin{equation}\label{fundament-group}i_*\colon \pi_1(U)\to \pi_1(\tor^n) \textup{ is not surjective.}\end{equation}
	Then $\tor^n\setminus \mathcal{S}$ does not admit any complete metric with positive $(\mu,f)$-weighted scalar curvature for any $\mu>\frac{n-1}{n}$ and $f\in C^\infty(M)$. 
\end{theorem}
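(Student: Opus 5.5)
We argue by contradiction, combining a Callias-type (twisted) Dirac operator adapted to the weight $f$ with the relative index theorem, following the strategy of \cite{XieYu14,Wang25} indicated in the introduction. Suppose $g$ is a complete metric on $\tor^n\setminus\mathcal S$ with $R_{\mu,f}(g)>0$ for some $\mu>\frac{n-1}{n}$ and $f\in C^\infty$. By \eqref{fundament-group}, the image $H=i_*\pi_1(U)$ is a proper subgroup of $\pi_1(\tor^n)=\Z^n$. After shrinking $U$ we may assume $U$ has finitely many components. There are two cases. If $H$ has infinite index, the projection $\Z^n\to\Z^n/H$ has infinite image, and we pass to a quotient $\Z^n\to\Z$ vanishing on $H$. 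If $H$ has finite index but $H\neq\Z^n$, we pass to a nontrivial finite quotient $\Z^n\to\Z/m$ killing $H$. In either case we obtain a covering $\widehat{\tor}\to\tor^n$ whose restriction to $U$ is trivial. The trivial covering $\tor^n\times F$, with $F$ the fiber, agrees with $\widehat{\tor}$ over a neighborhood of $\mathcal S$: both are disjoint unions of copies of $U$. We pull back $g$ and $f$ to both coverings, away from the lifts of $\mathcal S$.

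The analytic core is a weighted Lichnerowicz estimate. On the spinor bundle we consider the conjugated operator $D_f=e^{-\alpha f}De^{\alpha f}$, or equivalently $D+\alpha\, c(\nabla f)$-type perturbations, with $\alpha$ chosen depending on $\mu$. We aim for an integral inequality
\[
\int |D_f\sigma|^2\ge \int c_{n,\mu}\, R_{\mu,f}\,|\sigma|^2 ,\qquad c_{n,\mu}>0,
\]
valid for compactly supported $\sigma$. To get it, we expand $D_f^2$ using the Lichnerowicz formula and handle the cross terms $\langle \nabla f\cdot\nabla\sigma,\sigma\rangle$ with the refined Kato/Penrose-operator inequality $|\nabla\sigma|^2\ge\frac1n|D\sigma|^2$. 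This inequality is exactly where the threshold $\mu>\frac{n-1}{n}$ enters: completing the square in the free parameter $\alpha$ is possible precisely in that range. We expect this computation to be the most delicate step, namely choosing $\alpha$ and the decomposition so that the $|\nabla f|^2$ coefficient becomes $\mu$ and the Laplacian coefficient becomes $2$, up to a positive multiple. Our first attempt is the $\mu=1$ computation of Zhou--Zhu \cite{ZhouZhu25}, with the $1/n$ Penrose term inserted.

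Next we twist by a Hilbert $C^*$-module flat bundle over $\tor^n$, the Mishchenko bundle for $\Z^n$ or a suitable almost-flat family, pulled back to the regular part. Because $R_{\mu,f}>0$ only pointwise and $\tor^n\setminus\mathcal S$ is noncompact, positivity alone does not give invertibility, and we need a Callias potential. We therefore relate the two coverings by the relative index theorem (Bunke / Xie--Yu \cite{XieYu14}). On the compact parts the two operators coincide near the lifted singular set. Over the complete ends both have a uniform spectral gap, or are invertible, by the weighted estimate together with completeness, which guarantees essential self-adjointness. The difference of their higher indices therefore equals the difference of the higher indices of the corresponding operators on the closed manifolds $\widehat{\tor}$ and $\tor^n\times F$, computed for the unweighted, unpunctured tori. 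This difference is nonzero: in the infinite case, $\widehat{\tor}$ is enlargeable in the $\Z$-direction, or carries the Rosenberg index of $\Z^n$, while the disjoint union of tori yields a different class; in the finite case the multiplicities differ. On the other hand, the weighted positivity makes both Callias-type operators invertible, so both indices vanish. This is the desired contradiction.

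The main obstacles are the pointwise weighted estimate, which requires choosing $\alpha$ in terms of $\mu$, and setting up the relative index comparison. For the latter we need cutoffs placed far out in the complete ends, where $R_{\mu,f}$ may decay. We would handle this by first rescaling $f$, or by a Callias potential built from a distance function, so that the weighted curvature dominates near infinity.
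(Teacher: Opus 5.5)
The contradiction in your argument rests on the claim that your two coverings have different indices. As set up, they do not.

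\emph{Finite-index case, $Q=\Z/m$.} Decompose $L^2$ on the connected cover $\widehat{\tor}$ into $Q$-isotypic pieces. The $\chi$-piece of $D_{\widehat{\tor}}\otimes p^*\mathcal E$ is $D_{\tor^n}$ twisted by $\mathcal E\otimes L_\chi$, where $L_\chi$ is the flat line bundle with holonomy $\chi\circ\pi$. Every piece for the trivial cover $\tor^n\times Q$ is $D_{\tor^n}$ twisted by $\mathcal E$. Since $H^2(\tor^n;\Z)$ is torsion-free, $L_\chi$ is topologically trivial. So for any coefficient bundle $\mathcal E$ pulled back from $\tor^n$ (Mishchenko or almost flat), the two $Q$-indices agree and the relative index is $0$. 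Both covers are $m$-sheeted, and ``the multiplicities differ'' is false.

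\emph{Infinite-index case.} Without a $\Z^n$-twist, both $\Z$-indices vanish for $n\geq 2$. Indeed $\widehat{\tor}\cong\tor^{n-1}\times\R$ and $\ind(D_{\tor^{n-1}})=0=\ind(D_{\tor^n})$, so ``enlargeable in the $\Z$-direction'' gives nothing. A nonzero class does appear once you twist by the $\Z^n$-Mishchenko bundle. You then compare the pushforwards of $[\tor^n]$ along $z\mapsto(z,\pi(z))$ and $z\mapsto(z,0)$ into $K_*(C^*(\Z^n\times\Z))$, but this computation is exactly what is missing.

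The paper works $\Z^n$-equivariantly instead. It compares the universal cover $\widetilde M$, whose filled-in index is the top-degree Rosenberg class $[\tor^n]$, with the induced space $\widehat M=I\times M^\Gamma$ built from $\Gamma=i_*\pi_1(U)$. This space is $\Z^n$-equivariantly isometric to $\widetilde M$ near the lifts of $\Sing$, and its index is pushed forward from $K_*(C^*\Gamma)$; that is what separates it from $[\tor^n]$. Whatever comparison you use for finite index must also carry the lifted metric with $R_{\mu,f}>0$. By the computation above, two $m$-sheeted covers of $\tor^n\setminus\Sing$ with coefficients pulled back from $\tor^n$ cannot separate the indices.

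The analytic set-up also needs repair. Weighted positivity plus completeness gives no spectral gap on the ends, since $R_{\mu,f}$ may decay toward $\Sing$. Invertibility at infinity must come from a Callias potential, and in the paper it comes from coupling the two coverings. There is a single operator $B=\mathbf D+\varepsilon\bm\rho\,\mathcal U$ on $\widetilde M\sqcup\widehat M$, with $\mathbf D=\mathrm{diag}(\widetilde D,-\widehat D)$ and $\mathcal U$ the off-diagonal swap of spinor bundles induced by the equivariant isometry near $\Sing$. Because $\mathcal U$ anticommutes with $\mathbf D$, one gets $\|B\sigma\|\geq\varepsilon\|\sigma\|$ wherever $\rho\equiv1$, i.e.\ near $\Sing$, however small $R_{\mu,f}$ is there. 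The only error term, $\varepsilon|d\rho|$, lives over the compact set $\overline{U_2}\setminus U_1$ away from $\Sing$, where $R_{\mu,f}\geq\delta>0$. Small $\varepsilon$ then makes $B$ invertible. So the cutoff belongs in the compact core, not far out in the ends, and no rescaling of $f$ is needed.

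Your $1/n$ mechanism is the right one, but the estimate should be proved for the unconjugated $D$. With $\nabla^f=\nabla+\frac12 df$,
\[
\|D\sigma\|^2\geq\|\nabla^f\sigma\|^2+\tfrac14\int(R+2\Delta f-|\nabla f|^2)|\sigma|^2 .
\]
Since $\sum_i c(e_i)\nabla^f_{e_i}=D+\frac12 c(\nabla f)$, Cauchy--Schwarz gives
\[
|\nabla^f\sigma|^2\geq\tfrac{1}{4(n+\lambda)}|\nabla f|^2|\sigma|^2-\tfrac1\lambda|D\sigma|^2 .
\]
Together these yield $(1+\lambda^{-1})\|D\sigma\|^2\geq\frac14\int R_{\mu,f}|\sigma|^2$ with $\mu=\frac{n+\lambda-1}{n+\lambda}$. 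Conjugating by $e^{\alpha f}$ is unnecessary. Since $f$ and $|df|$ need not be bounded on $\tor^n\setminus\Sing$, it would also not obviously preserve the index.
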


The range \(\mu>\frac{n-1}{n}\) gives rise to the optimal dimensional bound for the singular set in Themorem \ref{B}. Extending the argument from the \(P\)-scalar curvature
case \(\mu=1\) to this full range requires a refined estimate for the modified
Dirac operator.  We note that the condition \eqref{fundament-group} in Theorem \ref{thm:torus} cannot be dropped, see 

For comparison, Brendle-Wang \cite{BrendleWang26} studies the werighted scalar curvature in an integral sense and used the distinct critical value  $\mu=\frac{n+1}{n+2}$.

We also establish an extremality result extending Llarull's theorem to open spin manifolds.

\begin{theorem}\label{thm:Llarull}
Let $M^n$ be an open spin manifold, and $\Phi\colon M^n\to \sph^n$ be a smooth map satisfying that $\Phi$ is locally constant outside some compact set $K$ and the degree of $\Phi $ is non-zero. Then there is no complete metric $g$ on $M$ with
\[R_{\mu, f}(g)-2|\wedge^2 d\Phi|_{\tr}>0,\]
where $\mu>\frac{n-1}{n}$, and $f\in C^\infty(M)$ and $|\wedge^2 d\Phi|_{\tr}:=\sqrt{\tr\left((\wedge^2 d\Phi_x)^*(\wedge^2 d\Phi_x)\right)},$
\end{theorem}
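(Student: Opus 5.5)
We argue by contradiction with a twisted Dirac operator, following Llarull's argument as adapted to open manifolds in \cite{XieYu14,Wang25}. The weight $f$ is handled by a conformally modified connection. Suppose $g$ is complete on $M$ with
\[
R_{\mu,f}(g)-2|\wedge^2 d\Phi|_{\tr}\geq \kappa>0 .
\]
The bound need not be uniform; positivity on the compact set $K$, where the twisting acts, is what matters.

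We first fix the twisting data. Let $E\to\sph^n$ be the standard bundle used in Llarull's theorem: the spinor bundle if $n$ is even, and the appropriate graded version for odd $n$ after passing to $M\times\R$ or using a Clifford grading. Pull it back to $F=\Phi^*E$, which is flat and trivialized outside $K$ because $\Phi$ is locally constant there. Consider the twisted Dirac operator $D_F$ on $S_M\otimes F$, and its difference with the untwisted operator $D$ on $S_M\otimes\mathbb C^{\operatorname{rk}E}$. The two agree outside $K$. By the relative index theorem for complete manifolds (Gromov--Lawson), $\ind(D_F)-\ind(D_{\mathrm{triv}})$ equals a topological integral over $K$, namely $\deg(\Phi)\,\chi(\sph^n)$ or its odd-dimensional analogue, which is nonzero.

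Next we bring in the weight through a modified operator
\[
D_f=e^{-af}D_F\,e^{af}\quad\text{or}\quad D_F+b\,c(\nabla f),
\]
with the constants $a,b$ chosen depending on $\mu$. We then derive a Lichnerowicz-type integral inequality. Write the Bochner formula $D_F^2=\nabla^*\nabla+\tfrac R4+\mathcal R^F$ and use Llarull's estimate $\mathcal R^F\geq -\tfrac12|\wedge^2d\Phi|_{\tr}$. We combine this with the refined Kato/Penrose-operator estimate $|\nabla\sigma|^2\geq \tfrac1n|D\sigma|^2$. After integrating by parts and completing the square in the $\nabla f$ terms, we aim for
\[
\|D_f\sigma\|^2\geq c_n\int \Big(R+2\Delta f-\mu|\nabla f|^2-2|\wedge^2d\Phi|_{\tr}\Big)|\sigma|^2
\]
for compactly supported $\sigma$. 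The expected obstacle is the bookkeeping that yields exactly the threshold $\mu>\tfrac{n-1}{n}$. Without the Penrose improvement one only gets $\mu\geq1$. Keeping the factor $\tfrac{n-1}{n}$ from $|\nabla\sigma|^2-\tfrac1n|D\sigma|^2\geq0$ leaves room to absorb a cross term $\langle\nabla f,\cdot\rangle$ by Cauchy--Schwarz with weight $\tfrac{n}{n-1}$. We will choose the conformal exponent and the Clifford perturbation so that the leftover coefficient of $|\nabla f|^2$ is nonpositive exactly when $\mu>\tfrac{n-1}{n}$.

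Finally we derive the contradiction. Completeness lets us apply the estimate to $L^2$-domain elements using cutoffs. Outside $K$ the two operators are identical, and the same inequality, minus the curvature term of $F$, holds for both. Positivity then makes both operators invertible near infinity (Fredholm) and in fact invertible globally, so both indices vanish. This contradicts the nonzero relative index. If positivity is not uniform at infinity, we instead rescale or use that only the region where the bundles differ needs strict positivity, combined with invertibility at infinity, as in \cite{Wang25}.
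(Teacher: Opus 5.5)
\textbf{There is a genuine gap: your Fredholm step needs uniform positivity, which the theorem does not assume.}

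Your argument goes through only under a uniform bound $R_{\mu,f}(g)-2|\wedge^2 d\Phi|_{\tr}\geq\kappa>0$, at least near infinity. The theorem assumes only pointwise positivity, and the non-uniform case is exactly the one used later. For the blown-up metric of Proposition~\ref{simplical-blow-up}, the available lower bound $Cu^{-2a}d_g(x,\mathcal S)^{-2}\asymp d_g(x,\mathcal S)^{2a(1-\eta)-2}$ tends to $0$ at the ends, because $a(1-\eta)>1$.

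With only pointwise positivity, your estimate $\|D\sigma\|^2\geq c\int\kappa(x)|\sigma|^2$ excludes $L^2$-harmonic spinors but gives no spectral gap at infinity. So neither $D_F$ nor $D_{\mathrm{triv}}$ needs to be Fredholm. Their indices are then undefined, and the Gromov--Lawson relative index theorem cannot be applied.

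Your fallback does not repair this. Rescaling by a constant multiplies the whole curvature expression by a constant, so it stays non-uniform. A conformal change such as $R_{\mu,f}(g)\,g$ may destroy completeness. And ``combined with invertibility at infinity'' presupposes exactly what has to be produced.

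\textbf{The missing idea is a Callias-type potential.} It gives invertibility at infinity independently of curvature, using that $\Phi$ is locally constant there. On each component $Z_i$ of $M\setminus K$ with $\Phi(Z_i)=p_i$, the bundle $S(TM\oplus\Phi^*T\sph^n)$ is $S(TM)\widehat\otimes S(T_{p_i}\sph^n)$. For a unit vector $V_i\in T_{p_i}\sph^n$, the operator $\mathcal V_i=\sqrt{-1}\,c'(V_i)$ is a parallel, odd, self-adjoint involution anticommuting with $c(TM)$. Set $B=D+\varepsilon\sum_i\rho_i\mathcal V_i$, with $\rho_i\equiv1$ on $Z_i\setminus\{d(\cdot,K)\leq1\}$ and $|\nabla\rho_i|\leq2$. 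Then
\[
\|B\sigma\|^2=\|D\sigma\|^2+\varepsilon\sum_i\int\langle c(\nabla\rho_i)\mathcal V_i\sigma,\sigma\rangle+\varepsilon^2\sum_i\int\rho_i^2|\sigma|^2 .
\]
The $\varepsilon^2$ term supplies the gap at infinity. The cross term is supported in $\{d(\cdot,K)\leq1\}$, which is compact by completeness. There the weighted curvature term has a positive minimum and absorbs the cross term once $\varepsilon$ is small. So strict positivity is needed on this whole collar, not only on $K$. Hence $B$ is invertible, while the relative index theorem gives $\ind(B)=\deg(\Phi)\chi(\sph^n)\neq0$.

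Two smaller points.
\begin{enumerate}
\item You need not modify the operator. Doing so when $|\nabla f|$ may be unbounded would jeopardize the index identification. With $\nabla^f_X=\nabla_X+\frac12\langle X,\nabla f\rangle$ one has
\[
\|\nabla^f\sigma\|^2=\|\nabla\sigma\|^2+\tfrac14\int(|\nabla f|^2-2\Delta f)|\sigma|^2,\qquad \sum_ic(e_i)\nabla^f_{e_i}=D+\tfrac12c(\nabla f).
\]
Cauchy--Schwarz then gives $|\nabla^f\sigma|^2\geq\frac{1}{4(n+\lambda)}|\nabla f|^2|\sigma|^2-\frac1\lambda|D\sigma|^2$, where $\mu=\frac{n+\lambda-1}{n+\lambda}$. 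This yields $(1+\frac1\lambda)\|D\sigma\|^2\geq\frac14\int(R_{\mu,f}(g)-2|\wedge^2d\Phi|_{\tr})|\sigma|^2$ for $D$ itself.
\item For odd $n$, passing to $M\times\R$ and running the same estimate in dimension $n+1$ only gives the threshold $\mu>\frac{n}{n+1}$. You should stay in dimension $n$, for instance via the spectral-flow argument the paper uses.
\end{enumerate}
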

\subsection{The tours case}
The purpose of this subsection is  to prove  Theorem \ref{thm:torus} using  the $\mathbb{Z}^n$-equivariant index. To this end, we will construct two
coverings of
$$
M:=\tor^n\setminus\mathcal S
$$
which agree near the lift of $\mathcal{S}$ but differ globally. The
relative index theorem then yields a nonzero class in
$K_*(C^*\mathbb{Z}^n)$. On the other hand, positivity of the
$(\mu,f)$-weighted scalar curvature forces this class to vanish, yielding
the desired contradiction.

\subsubsection{Two $\mathbb{Z}^n$ coverings} \label{subsubsec:covering} We begin with the universal cover   $\widetilde{\tor}^n$ of $\tor^n$ and
the lift $\widetilde{\mathcal S}\subset \widetilde{\tor}^n$ of $\mathcal S$.  Then
\[ 
\widetilde M:=\widetilde{\tor}^n\setminus\widetilde{\mathcal S}\rightarrow M
\] is a $\mathbb{Z}^n$-covering.

For another covering, we consider the proper subgroup of $\Gamma:=i_*(\pi_1(U))\subset \mathbb{Z}^n$. Hence, there is a covering 
$\tor^{n,\Gamma}$ of $\tor^n $ corresponding to $\Gamma$ and the lift  
$\mathcal S^\Gamma\subset\tor^{n,\Gamma} $ of
$\mathcal S$. Set 
\[
M^\Gamma:=\tor^{n,\Gamma}\setminus\mathcal S^\Gamma.
\] By the construction, the lift $\mathcal S^\Gamma$ is connected. Moreover, the  universal property of $\widetilde{\tor}^n$ gives  a cover map
\begin{equation}\label{required-cover}
    \widetilde{M}=\widetilde{\tor}^n\setminus \widetilde{\mathcal{S}}\longrightarrow M^\Gamma:=\tor^{n, \Gamma}\setminus\mathcal{S}^\Gamma
\end{equation} In particular,  $\widetilde{\mathcal S}$ is the disjoint union of
$\lvert\mathbb{Z}^n/\Gamma\rvert$ copies of
$\mathcal S^\Gamma$.

\vspace{2mm}

The cover $M^\Gamma$ carries a $\Gamma$-action, rather than  $\mathbb{Z}^n$-action. To obtain $\mathbb{Z}^n$-invariant, choose a set $I\subset \mathbb{Z}^n$ of representatives for the left coset of $\Gamma$, i.e 
$
\mathbb Z^n=\bigsqcup_{a\in I}a\Gamma
$.  Define that 
\begin{equation}\label{eq:hat-torus}
\widehat{\tor}^n
:=
I\times\tor^{n,\Gamma}
 \quad \quad  \widehat{M}:=I\times M^\Gamma \quad \widehat{\mathcal{S}}:=I \times \mathcal{S}^\Gamma.
\end{equation} Since $\Gamma$ is a proper subgroup of $\mathbb{Z}^n$, the set $I$
has at least two elements; in particular, $\widehat M$ is disconnected. 

\vspace{2mm}

We now define a $\mathbb{Z}^n$-action on $\widehat{\tor}^n$, and hence
on $\widehat M$. For $z\in\mathbb{Z}^n$ and $a\in I$, there is a
unique pair $(b,\gamma)\in I\times\Gamma$ with
$
za=b\gamma.
$
Define
\begin{equation}\label{eq:Zn-action}
z\cdot(a,p):=(b,\gamma^{-1}\cdot p),
\qquad
p\in\tor^{n,\Gamma}.
\end{equation}
This is a well-defined $\mathbb{Z}^n$-action. Its restriction to
$\widehat M$ preserves $\widehat{\mathcal S}$, and therefore induces a
$\mathbb{Z}^n$-action on $\widehat M$.

\begin{remark}\label{two-action-compar} The two coverings are globally different: $\widetilde M$ is connected,
whereas $\widehat M$ is disconnected. Nevertheless, they are 
homeomorphic near the respective lifts of $\mathcal S$, and this
identification is $\mathbb{Z}^n$-equivariant.

From \eqref{required-cover}, we choose some neighborhood $U_2\subset \tor^n$ of $\mathcal{S}$ such that $\widetilde U_2\cong I\times U_2^\Gamma$, where
$\widetilde U_2$ and  $U_2^\Gamma$ are the lifts of $U_2$ in $\widetilde{\tor}^n$ and $\tor^{n,\Gamma}$. Let $\widehat U_2$ be the lift of $U_2$ in  $\widehat{\tor}^n$ and  fix a component $(U^\Gamma_2)_0$ of $\widetilde{U}_2$ containing the base point. From \eqref{required-cover}, $(U^\Gamma_2)_0$ is homoemorphic to $U^\Gamma_2$ and any element in $\widetilde{U}_2$ can be written in the form  $a\cdot p$, where $a\in I$ and $p\in (U^\Gamma_2)_0$. 
Combining it with \eqref{eq:hat-torus} yields a homeomorphism 
\begin{equation}
T\colon \widetilde U_2\to\widehat U_2, \quad T(a\cdot p)=(a,p)\in I\times U^\Gamma_2.
\end{equation} Moreover, $T$ is $\mathbb{Z}^n$-invariant. 
\end{remark}

\subsubsection{Construct non-zero element in $K_*(C^*\mathbb{Z}^n)$}\label{subsubsec:index} Consider  a complete Riemannian metric $g$ on $M=\tor^n\setminus\mathcal S$ and  choose relatively compact neighborhoods of $\mathcal{S}$ and a cut-off function $\rho$ on $M$ 
\[
 \mathcal{S}\subset U_1\Subset U_2\subset \tor^n, \quad 
\rho\equiv1\quad\text{on }U_1,
\quad
\rho\equiv0\quad\text{on }\tor^n\setminus U_2.
\]

Since $d\rho$ has compact support, its Lipschitz constant with respect to
$g$,
\begin{equation}\label{cut-off-lip-constant}
L_\rho:=\sup_M\lvert d\rho\rvert_g,
\end{equation}
is finite.

Let $\widetilde g$ and $\widehat g$ be
lifts of $g$ to $\widetilde M$ and $\widehat M$, and similarly let $\widetilde \rho$ and $\widehat \rho$ lifts of $\rho$ to $\widetilde M$ and $\widehat M$. From Remark \ref{two-action-compar}, the map $T: \widetilde{U}_2\setminus \widetilde{\mathcal{S}}\rightarrow \widehat{U}_2\setminus \widehat{\mathcal{S}}$ is a $\mathbb{Z}^n$-equivariant isometry.
Fix  a spin structure on $\tor^n$, and  denote by $S(TM)$ the spinor bundle  $M$.
Let  $S(T\widetilde M)$ and $S(T\widehat M)$ be the corresponding lifted spinor bundles to $\widetilde M$ and $\widehat M$, respectively.  The map
$T$ induces a $\mathbb{Z}^n$-equivariant unitary bundle isomorphism
\begin{equation}\label{eq:spinor-identification}
\mathcal V\colon
S(T\widetilde U_2)
\longrightarrow
S(T\widehat U_2),
\end{equation}
which is parallel and commutes with Clifford multiplications.

Consider the disjoint union $\widetilde M\sqcup \widehat M$ with the metric $\widetilde g\sqcup\widehat g$. The $L^2$-space of the spinor bundle $S(T\widetilde M)\sqcup S(T\widehat M)$ is 
	$L^2(\widetilde M,S(T\widetilde M))\oplus L^2(\widehat M,S(T\widehat M))$.
Define the Dirac operator, the connection and the Clifford action
	\begin{equation}
		\mathbf{D}=\begin{pmatrix}
			\widetilde D&0\\0&-\widehat D
		\end{pmatrix}, \quad \mathbf{\nabla}=\begin{pmatrix}
			\widetilde\nabla&0\\0&\widehat\nabla
		\end{pmatrix} \quad 
		\mathbf{c}(X)=\begin{pmatrix}
			\widetilde c(\widetilde X)&0\\0&-\widehat c(\widehat X)
		\end{pmatrix}
	\end{equation}
	for $X=\widetilde X\oplus \widehat X\in C_c^\infty(\widetilde M,T\widetilde M)\oplus C_c^\infty(\widehat M,T\widehat M)$.

   We now introduce a Callias-type perturbation. Set
	\begin{equation}
		\bm{\rho}=\begin{pmatrix}
			\widetilde\rho&0\\0&\widehat\rho
		\end{pmatrix},\qquad \mathcal U=\begin{pmatrix}
			0&\mathcal V^*\\
			\mathcal V&0
		\end{pmatrix}.
	\end{equation}
	For $\varepsilon>0$, define the Callias-type operator
	\begin{equation}
		B=D+\varepsilon \bm\rho\cdot\mathcal U
	\end{equation} on 
      $
		H_0=C_c^\infty(\widetilde M,S(T\widetilde M))\oplus C_c^\infty(\widehat M,S(T\widehat M)).
	$ We equip $H_0$ with the graph norm 
	$$\|\sigma\|_B^2=\|\sigma\|^2+\|B\sigma\|^2,$$ and let \(\operatorname{Dom}(B)\) be its completion with respect to this
norm. Such a operator is uniformly invertible outside a $\mathbb{Z}^n$-cocompact set. 
	
	\begin{lemma}
		The operator $B$ is uniformly invertible outside a $\Z^n$-cocompact set. More precisely, for any $\sigma\in H_0$ supported inside $\widehat U_1\sqcup\widetilde U_1$, we have
		$$\|B\sigma\|^2\geq\varepsilon^2\|\sigma\|^2.$$
	\end{lemma}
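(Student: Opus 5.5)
Expand $\|B\sigma\|^2$ directly. On $\widetilde U_1\sqcup\widehat U_1$ we have $\bm\rho\equiv 1$, so $B\sigma=\mathbf D\sigma+\varepsilon\,\mathcal U\sigma$ for $\sigma$ supported there. Hence
\[
\|B\sigma\|^2=\|\mathbf D\sigma\|^2+\varepsilon^2\|\mathcal U\sigma\|^2+\varepsilon\bigl(\langle \mathbf D\sigma,\mathcal U\sigma\rangle+\langle \mathcal U\sigma,\mathbf D\sigma\rangle\bigr).
\]
First, $\mathcal V$ is unitary, so $\mathcal U$ is a self-adjoint unitary on spinors supported over $\widetilde U_2\sqcup\widehat U_2$. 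This gives $\|\mathcal U\sigma\|=\|\sigma\|$. It therefore suffices to show that the cross term vanishes, i.e. that $\mathbf D\mathcal U+\mathcal U\mathbf D=0$ on sections supported in $\widetilde U_1\sqcup\widehat U_1$. Then $\|B\sigma\|^2=\|\mathbf D\sigma\|^2+\varepsilon^2\|\sigma\|^2\geq\varepsilon^2\|\sigma\|^2$.

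The key step is the anticommutation. Write $\sigma=(\widetilde\sigma,\widehat\sigma)$. Then
\[
\mathbf D\mathcal U\sigma=(\widetilde D\mathcal V^*\widehat\sigma,\,-\widehat D\mathcal V\widetilde\sigma),\qquad
\mathcal U\mathbf D\sigma=(-\mathcal V^*\widehat D\widehat\sigma,\,\mathcal V\widetilde D\widetilde\sigma).
\]
By Remark~\ref{two-action-compar} and \eqref{eq:spinor-identification}, $T$ is an isometry over $\widetilde U_2$, and $\mathcal V$ is parallel and intertwines Clifford multiplication. Together these give $\widehat D\mathcal V=\mathcal V\widetilde D$ on sections supported in $\widetilde U_2\setminus\widetilde{\mathcal S}$, and likewise $\widetilde D\mathcal V^*=\mathcal V^*\widehat D$. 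The sum of the two expressions above is therefore zero. Here $\mathcal V\widetilde\sigma$ must still be compactly supported in $\widehat U_2\setminus\widehat{\mathcal S}$, so that everything stays in $H_0$. This holds because $T$ is a homeomorphism preserving the singular lifts, and $\operatorname{supp}\widetilde\sigma\subset\widetilde U_1$ is compact.

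The main point to check is the sign convention. The opposite signs on the second summand of $\mathbf D$ (namely $-\widehat D$) and of $\mathbf c$ are exactly what convert commutation of $\mathcal V$ with the Dirac operators into anticommutation of $\mathcal U$ with $\mathbf D$. With this verified, the cross term is $\varepsilon\langle(\mathbf D\mathcal U+\mathcal U\mathbf D)\sigma,\sigma\rangle$, using that $\mathbf D$ is formally self-adjoint on compactly supported smooth sections and $\mathcal U$ is self-adjoint. So it vanishes identically, and no Lipschitz term $L_\rho$ from \eqref{cut-off-lip-constant} enters, since $d\rho=0$ on $U_1$.

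Finally, the sets $\widetilde U_1$ and $\widehat U_1$ are lifts of the neighborhood $U_1$, which is relatively compact in $\tor^n$ (this is what the statement means by ``outside a cocompact set''). Their complements in $\widetilde M\sqcup\widehat M$ are lifts of $M\setminus U_1$, whose closure in $M$ is compact, hence $\Z^n$-cocompact. This yields the claimed uniform invertibility.
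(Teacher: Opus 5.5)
Your proposal is correct and follows the same route as the paper. On the lifts of $U_1$ you use $\bm\rho\equiv 1$, the anticommutation $\mathcal U\mathbf D=-\mathbf D\mathcal U$ kills the cross term, and $\|\mathcal U\sigma\|=\|\sigma\|$ gives $\|B\sigma\|^2=\|\mathbf D\sigma\|^2+\varepsilon^2\|\sigma\|^2$. You also supply details the paper's two-line proof leaves implicit: the intertwining $\widehat D\mathcal V=\mathcal V\widetilde D$ coming from $T$ being an isometry, and the role of the sign on $-\widehat D$ in turning commutation into anticommutation.
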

	\begin{proof}
		Note that on $\widehat U_1\sqcup\widetilde U_1$, we have $\overbar\rho\equiv 1$ and $\mathcal UD=-D\mathcal U$. Therefore,
		\begin{equation}
			\|B\sigma\|^2=\|D\sigma\|^2+\varepsilon^2\|\mathcal U\sigma\|^2\geq\varepsilon^2\|\sigma\|^2.
		\end{equation}
	\end{proof}
	
	We show that the index of $B$ is a non-zero element in $K_*(C^*\mathbb{Z}^n)$, using  the relative index theorem (see, for example, \cite{HochsWang26,Wang25,XieYu14}).
	\begin{lemma}\label{lemma:index}
		The operator $B$ defines an index class $\ind(B)$ in $K_*(C^*\Z^n)$, such that $\ind(B)=0$ if $B$ is bounded from below. Moreover, when $\Gamma$ is a proper subgroup of $\Z^n$, $\ind(B)\ne 0$ in $K_*(C^*\Z^n)$.
	\end{lemma}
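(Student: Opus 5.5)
The plan has three parts: define the class, check that it vanishes when $B$ is bounded below, and show it is nonzero by cutting and pasting down to compact quotients and pairing with a cyclic cocycle.

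\textbf{Defining the class and the vanishing statement.} The operator $B$ is $\mathbb Z^n$-equivariant, first-order and essentially self-adjoint. The metrics $\widetilde g$, $\widehat g$ are complete and the perturbation $\varepsilon\bm\rho\,\mathcal U$ is bounded with bounded commutators. By the preceding lemma, $B^2\geq\varepsilon^2$ on sections supported in $\widetilde U_1\sqcup\widehat U_1$, and the complement of this set is $\mathbb Z^n$-cocompact. I would follow the standard Callias-type construction used in \cite{XieYu14,Wang25}. Take a normalizing function $\chi$, so that $\chi(B)^2-1$ lies in the equivariant Roe algebra localized near the cocompact complement, which is Morita equivalent to $C^*\mathbb Z^n$. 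This yields $\ind(B)\in K_*(C^*\mathbb Z^n)$, with the degree fixed by the grading conventions on $\mathbf D$. If $B$ is bounded below, i.e.\ $\|B\sigma\|\geq c\|\sigma\|$ for some $c>0$, choose $\chi$ equal to $\pm1$ on the spectrum of $B$. Then $\chi(B)^2=1$ exactly, so the class vanishes.

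\textbf{Relative index step.} Because $\mathcal U$ intertwines the two halves through the equivariant isometry $T$ of Remark~\ref{two-action-compar}, the relative index theorem lets us replace the region $\widetilde U_1\sqcup\widehat U_1$ on both sides by a common piece without changing $\ind(B)$. On that region the operator is invertible, and the identification is compatible on the two copies. Concretely, I would cut $\widetilde M$ and $\widehat M$ along the lift of a hypersurface in $U_1\setminus U_0$, where $\mathcal S\subset U_0\Subset U_1$. I would then glue back the lifts of the same smooth compact filling, namely the corresponding lifts of the region near $\mathcal S$ equipped with any smooth metric on $\tor^n$, on both sides simultaneously. The outcome is
\[
\ind(B)=\ind_{\mathbb Z^n}\bigl(D_{\widetilde{\tor}^n}\bigr)-\ind_{\mathbb Z^n}\bigl(D_{\widehat{\tor}^n}\bigr)\in K_n(C^*\mathbb Z^n).
\]
Here the second term is the induction $i_*\ind_\Gamma(D_{\tor^{n,\Gamma}})$ along $i\colon C^*\Gamma\to C^*\mathbb Z^n$, since $\widehat{\tor}^n=I\times\tor^{n,\Gamma}$ with the induced action \eqref{eq:Zn-action}.

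\textbf{Nonvanishing.} I would pair both classes with the top-degree cyclic cocycle $\tau$ on $\mathbb C\mathbb Z^n$ coming from the fundamental class of $\mathbb Z^n$, equivalently the top Chern character on the dual torus. This pairing extends to $K_*(C^*\mathbb Z^n)$ by Connes--Moscovici, since $\mathbb Z^n$ has polynomial growth.
\begin{itemize}
\item By the higher index theorem, $\langle\tau,\ind D_{\widetilde{\tor}^n}\rangle=\pm1$.
\item For the induced class, $\langle\tau,i_*x\rangle=\langle i^*\tau,x\rangle$.
\item If $\Gamma$ has rank $<n$, then $i^*\tau=0$ because $H^n(\Gamma)=0$.
\item If $\Gamma$ has finite index $k\geq2$, then $i^*\tau$ is a multiple of the fundamental cocycle of $\Gamma$, namely $k$ times it up to normalization.
\end{itemize}
Then $\langle\tau,\ind D_{\tor^{n,\Gamma}}\rangle=\pm1$ gives a pairing of $\pm k$. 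In both cases the difference is $\pm1-0$ or $\pm(1-k)\neq0$, so $\ind(B)\neq0$.

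\textbf{Main obstacle.} I expect the hard step to be the cut-and-paste. One must verify that $\mathcal U$ and the sign conventions in $\mathbf D$ and $\mathbf c$ make the glued operator genuinely split as a difference, rather than a sum or zero. One must also get the normalization of $i^*\tau$ right in the finite-index case: if it came out as $k^{-1}$ instead of $k$, the argument still works, but the value $1$ must be excluded, and that is exactly the case $\Gamma=\mathbb Z^n$.
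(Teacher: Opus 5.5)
Your proposal is correct and follows the paper's argument. The class is defined through the Callias-type construction of Xie--Yu and Wang, the relative index theorem reduces it to $\ind(D_{\widetilde\tor^n})-i_*\ind(D_{\tor^{n,\Gamma}})$, and nonvanishing comes from comparing top-degree components according to whether $\Gamma$ has rank $<n$ or finite index $k\geq 2$. The paper reads off the top-degree part via the Chern character on $K^*(\tor^n)\cong K_*(C^*\Z^n)$, with $i_*$ given by pullback along the dual map $\tor^n\to\hat\Gamma$. This yields $|\Z^n/\Gamma|\cdot[\tor^n]$, confirming your factor $k$ rather than $k^{-1}$, and your pairing with the top-degree cyclic cocycle is the same computation on the dual side.
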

	\begin{proof}
		Since $B$ is invertible outside a $\Z^n$-cocompact set, the index class $\ind(B)\in K_n(C^*\Z^n)$ is well-defined  (see \cite[Section 4]{Wang25} or \cite[Section 3]{XieYu14} for the precise definition).
		By the relative index theorem \cite[Theorem A]{XieYu14} and the computation in \cite[Section 4]{Wang25}, we have
		\begin{equation}\label{relative-index}
			\ind(B)=\ind(D_{\widetilde \tor^n})-\ind(D_{\widehat \tor^n})\in K_*(C^*\Z^n),
		\end{equation}
		where $D_{\widetilde \tor^n}$ and $D_{\widehat \tor^n}$ are the Dirac operators on $\widetilde \tor^n$ and $\widehat \tor^n$ with respect to any smooth metric lifted from $\tor^n$, respectively. 

        \vspace{2mm}

        We next compute the relative index class. Using the universal cover $\widetilde{\tor}^n$ gives the isomorphism 
$
K_*(C^*\mathbb{Z}^n)\cong K^*(\tor^n).$
Moreover, under the Chern character map, we have that 
$
\operatorname{Ind}(D_{\widetilde{\tor}^n})
=[\tor^n]\in H^n(\tor^n;\mathbb{Q}).
$

It remains to compute the index class
$\operatorname{Ind}(D_{\widehat{\tor}^n})$. Recall that
\(\widehat{\tor}^n\cong I\times  \tor^{n,\Gamma}\) and  \(\mathbb{Z}^n/\Gamma\) acts freely and
transitively on these components. It follows that
		\begin{equation}
			\ind(D_{\widehat \tor^n})=i_*(\ind(D_{\tor^{n,\Gamma}})),
		\end{equation}
		where $\ind(D_{\tor^{n,\Gamma}})$ is the $\Gamma$-equivariant index with values in $K_*(C^*\Gamma)$, and $i_*$ is induced by the inclusion map $i\colon C^*\Gamma\to C^*\Z^n$. Moreover, since  $C^*\Z^n\cong C(\tor^n)$ and $C^*\Gamma\cong C(\hat \Gamma)$, where $\hat\Gamma$ is the dual of $\Gamma$,  the
homomorphism \(i\colon C^*\Gamma\to C^*\mathbb{Z}^n\) is induced by the
dual map $i^*\colon \tor^n\to\hat \Gamma$, which implies that 
		\begin{equation}
			i_*(\ind(D_{\tor^{n,\Gamma}}))=\begin{cases}
				\textup{a degree } r \textup{ cohomology class},&r<n\\
				|\Z^n/\Gamma|\cdot[\tor^n],&r=n.
			\end{cases}
		\end{equation}
        where $r$ is the rank of $\Gamma$. Consequently, we conclude that from \eqref{relative-index}
		\begin{equation}
			\ind(B)=\ind(D_{\widetilde \tor^n})-\ind(D_{\widehat \tor^n})\ne 0\textup{ in }K_*(C^*\Z^n).
		\end{equation}
	\end{proof}

    \subsubsection{Proof of Theorem \ref{thm:torus}}  Suppose the contrary that there is a complete metric $g$ on $\tor^n\setminus \mathcal{S}$ with $R_{\mu, f}(g)>0$, where $\mu>\frac{n-1}{n}$. There is a positive constant $\lambda$ such that 
    \begin{equation}
        \mu=\frac{n+\lambda-1}{n+\lambda}
    \end{equation}

    Under the notation of Sections \ref{subsubsec:covering} and \ref{subsubsec:index}, we assume that 
$\chi_{1-\bm\rho}$ and $\chi_{\bm\rho}$ are the characteristic functions of the supports of $(1-\bm\rho)$ and $\bm\rho$, respectively. For simplicity of notation, we will abuse the notations and also write $f$ and $g$ for their  lifts to $\widetilde{M}\sqcup\widehat{M}$. We introduce a new connection 
	\begin{equation}\label{new-connection}
		\nabla^f_X=\nabla_X+\frac 1 2\langle X,\nabla f\rangle.
	\end{equation}
We use the this connect to get the Lichnerowicz formula for the operator $B$.
	\begin{lemma}\label{lemma:Lichnerowicz}
        For any $f\in C^\infty(M)$ and $\sigma\in H_0$, 
		\begin{align*}
			\|B\sigma\|^2\geq \frac{\lambda}{4(1+\lambda)}\int_{ M} R_{\mu,f}(g)\cdot|\sigma|^2+\varepsilon^2\|\chi_{\rho}\sigma\|^2-\varepsilon L_\rho\|\chi_{1-\rho}\sigma\|^2
		\end{align*}
	\end{lemma}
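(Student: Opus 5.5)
The plan is to split off the Callias perturbation first and then prove a pointwise-integrated weighted Lichnerowicz estimate for the unperturbed operator $\mathbf D$ alone.

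\emph{Step 1: the Callias part.} Expand
\[
\|B\sigma\|^2=\|\mathbf D\sigma\|^2+\varepsilon^2\|\bm\rho\,\mathcal U\sigma\|^2+\varepsilon\langle(\mathbf D\bm\rho\,\mathcal U+\bm\rho\,\mathcal U\mathbf D)\sigma,\sigma\rangle .
\]
Because $\mathbf c$ carries the sign $-\widehat c$ on the second summand and $\mathcal V$ commutes with Clifford multiplication, $\mathcal U$ anticommutes with $\mathbf D$. Hence the anticommutator reduces to Clifford multiplication by $d\bm\rho$ composed with $\mathcal U$, and $\mathcal U$ is unitary. This term is supported on $\operatorname{supp} d\rho$, where $|d\rho|\le L_\rho$, and it is bounded in absolute value by $\varepsilon L_\rho\|\chi_{1-\rho}\sigma\|^2$. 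The support of $d\rho$ lies in the support of $1-\rho$, which is why $\chi_{1-\rho}$ appears. The zeroth order term $\varepsilon^2\|\bm\rho\,\mathcal U\sigma\|^2=\varepsilon^2\|\bm\rho\sigma\|^2$ gives the $\varepsilon^2\|\chi_\rho\sigma\|^2$ contribution. If $\rho^2$ is not literally $\chi_\rho$, I would either take $\rho$ with $\rho\in\{0,1\}$ off a thin collar or absorb the collar into the $L_\rho$ term; this is harmless for the later contradiction argument.

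\emph{Step 2: the weighted Lichnerowicz estimate for $\mathbf D$.} It remains to show
\[
\|\mathbf D\sigma\|^2\ge\frac{\lambda}{4(1+\lambda)}\int R_{\mu,f}|\sigma|^2 .
\]
This is computed on each component separately, since the formula is local and $\sigma$ has compact support in $\widetilde M\sqcup\widehat M$. With the connection $\nabla^f$ from \eqref{new-connection}, the associated Dirac operator is $D^f=D+\tfrac12 c(\nabla f)$. I will use the following identities and inequalities.
\begin{itemize}
\item The Lichnerowicz formula gives $\|D\sigma\|^2=\|\nabla\sigma\|^2+\tfrac14\int R|\sigma|^2$.
\item Substituting $\nabla=\nabla^f-\tfrac12 df\otimes$ and integrating by parts via $\int\langle\nabla_{\nabla f}\sigma,\sigma\rangle=-\tfrac12\int\Delta f|\sigma|^2$ gives
\[
\|\nabla\sigma\|^2=\|\nabla^f\sigma\|^2+\tfrac12\int\Delta f|\sigma|^2-\tfrac14\int|\nabla f|^2|\sigma|^2 .
\]
\item The twistor (Cauchy--Schwarz) inequality for $\nabla^f$ reads $|\nabla^f\sigma|^2\ge\tfrac1n|D^f\sigma|^2$.
\item Expanding $|D^f\sigma|^2=|D\sigma+\tfrac12c(\nabla f)\sigma|^2$ produces a cross term $\mathrm{Re}\langle D\sigma,c(\nabla f)\sigma\rangle$. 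Its integral can be rewritten using the self-adjointness of $D$ and $D(f\sigma)=c(\nabla f)\sigma+fD\sigma$. Alternatively it can be absorbed with a Young inequality $2ab\ge-\delta a^2-\delta^{-1}b^2$.
\end{itemize}

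\emph{Step 3: choosing parameters.} I would write $\|D\sigma\|^2=(1-s)\|D\sigma\|^2+s\|D\sigma\|^2$. The Lichnerowicz-plus-twistor bound is applied to the first part, and the second part is kept to absorb the negative $\|D\sigma\|^2$ pieces coming from the Young inequality. This yields
\[
\|D\sigma\|^2\ge\int\Big(\alpha R+2\alpha\Delta f-\beta|\nabla f|^2\Big)|\sigma|^2
\]
with $\beta/\alpha$ depending on $s$ and $\delta$. The parameters should be tuned so that $\beta/\alpha=\mu=\frac{n+\lambda-1}{n+\lambda}$ and $\alpha=\frac{\lambda}{4(1+\lambda)}$. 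The limiting value $\mu\downarrow\frac{n-1}{n}$ corresponds to $\lambda\to0$, where the coefficient degenerates, which is consistent with the claimed constant. This optimization is the main obstacle: the bookkeeping of the twistor term, which is the source of the improvement from $\mu=1$ to $\mu>\frac{n-1}{n}$, must produce exactly these constants. I would first try the choice $s=\frac1{1+\lambda}$ and fix $\delta$ by forcing the $\Delta f$ coefficient to be exactly twice the $R$ coefficient.

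Combining Steps 1–3 gives the inequality of Lemma \ref{lemma:Lichnerowicz}.
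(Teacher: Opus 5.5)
Your proposal is correct and follows the paper's proof. It uses the same Callias expansion, and your remark that $\varepsilon^2\|\bm\rho\sigma\|^2$ only controls $\varepsilon^2\|\chi_{\{\rho=1\}}\sigma\|^2$ is a fair correction to how the paper writes that term. It then uses Lichnerowicz plus the $\nabla^f$ identity, and your twistor inequality followed by Young is exactly the paper's single weighted Cauchy--Schwarz estimate $|\nabla^f\sigma|^2\geq\frac{1}{4(n+\lambda)}|\nabla f|^2|\sigma|^2-\frac1\lambda|D\sigma|^2$, split into two steps.

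To finish the bookkeeping, note that the ratio $2$ between the $\Delta f$ and $R$ coefficients holds automatically, so it cannot determine $\delta$, and the self-adjointness rewrite of the cross term only returns a tautology. Instead, your condition $\beta/\alpha=\mu$ forces $\delta=\frac{n+\lambda}{\lambda}$. With that $\delta$, your choice $s=\frac{1}{1+\lambda}$ cancels the $\|D\sigma\|^2$ terms exactly, which is the paper's division by $1+\frac1\lambda$.
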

	\begin{proof}
		Since $\mathcal U$ anti-commutes with Clifford multiplication, we gave  that 
		\begin{equation}\label{eq:B^2}
			\|B\sigma\|^2\geq \|D\sigma\|^2+\varepsilon^2\|\chi_{\rho}\sigma\|^2-\varepsilon L_\rho\|\chi_{d\rho}\sigma\|^2,
		\end{equation}
		where $\sigma \in H_0$ and $L_\rho$ is defined in \eqref{cut-off-lip-constant}. The  Lichnerowicz formula yields 
		\begin{equation}\label{eq:D}
			\|D\sigma\|^2\geq \|\nabla\sigma\|^2+\frac 1 4\int_{\widetilde M\sqcup\widehat{M}}R(g)|\sigma|^2.
		\end{equation}
        On the other hand, using the connection \eqref{new-connection} and
integrating by parts obtain
		\begin{equation*}
			\begin{split}
				\|\nabla^f\sigma\|^2=&\sum_{i=1}^n\int_{\widetilde M\sqcup\widehat{M}}|\nabla_{e_i}\sigma+\frac 1 2\langle e_i,\nabla f\rangle\sigma|^2\\
				=&\|\nabla\sigma\|^2+\frac 1 4\int_{\widetilde M\sqcup\widehat{M}}|\nabla f|^2|\sigma|^2+\frac 1 2\int_{\widetilde M\sqcup\widehat{M}}\big(\langle\nabla_{e_i}\sigma,\langle e_i,\nabla f\rangle\sigma\rangle+\langle\langle e_i,\nabla f\rangle\sigma,\nabla_{e_i}\sigma\rangle\big)\\
                =&\|\nabla\sigma\|^2+\frac 1 4\int_{\widetilde M\sqcup\widehat{M}}|\nabla f|^2|\sigma|^2+\frac 1 2\int_{\widetilde M\sqcup\widehat{M}}
                \langle\nabla f,\nabla(|\sigma|^2)\rangle\\
                =&\|\nabla\sigma\|^2+\frac 1 4\int_{\widetilde M\sqcup\widehat{M}}\Big(|\nabla f|^2-2\Delta f\Big)|\sigma|^2.
			\end{split}
		\end{equation*}
Combining it with \eqref{eq:D} yields  
		\begin{equation}\label{eq:DNablaf}
			\begin{split}
				\|D\sigma\|^2\geq& \|\nabla^f\sigma\|^2+\frac 1 4\int_{\widetilde M\sqcup\widehat{M}}\Big(R(g)+2\Delta f-|\nabla f|^2\Big)|\sigma|^2.
			\end{split}
		\end{equation}

        \vspace{2mm}

        Moreover, we have  $\sum_{i=1}^n c(e_i)\nabla^f_{e_i}=D+\frac 1 2c(\nabla f)$. Consequently, the Cauchy--Schwarz inequality gives 
		\begin{equation*}
			(1+\frac{n}{\lambda})\left(|D\sigma|^2+ \lambda \big|\nabla^f\sigma\big|^2\right)= (1+\frac{n}{\lambda}) \left(|D\sigma|^2+\lambda\sum_{i=1}^n \big|-c(e_i)\nabla^f_{e_i}\sigma\big|^2\right)\geq\frac 1 4|\nabla f|^2|\sigma|^2. 
		\end{equation*}
        Equivalently, 
       \begin{equation}\label{eq:CauchySchwarz}
           \big|\nabla^f\sigma\big|^2 \geq \frac{1}{4(n+\lambda)} |\nabla f|^2|\sigma|^2  - \frac{1}{\lambda}|D\sigma|^2
        \end{equation}
		Combining \eqref{eq:DNablaf} with \eqref{eq:CauchySchwarz}, we obtain
		\begin{equation}\label{eq:DNablaf2}
			\begin{split}
				\left(1+\frac 1 \lambda\right)\|D\sigma\|^2\geq &\frac 1 4\int_{\widetilde M\sqcup\widehat{M}}\Big(R(g)+2\Delta f-\frac{n+\lambda-1}{n+\lambda}|\nabla f|^2\Big)|\sigma|^2\\
                = &\frac{1}{4}\int_{\widetilde M\sqcup\widehat{M}}R_{\mu,f}(g)|\sigma|^2.
			\end{split}
		\end{equation}
    Substituting \eqref{eq:DNablaf2} into \eqref{eq:B^2} gives the desired
estimate.
	\end{proof}

   We now complete the proof of Theorem \ref{thm:torus}. Since
$1-\bm{\rho}$ has compact support, there exists a constant
$\delta>0$ such that

$$
R_{\mu,f}(g)>\delta
$$

on $\operatorname{supp}(1-\bm{\rho})$. Moreover,
$
R_{\mu,f}(g)>0
$
on $\widetilde M\sqcup\widehat M$.

By Lemma \ref{lemma:Lichnerowicz}, if $\varepsilon>0$ is chosen so that $
\varepsilon L_\rho<\frac{\delta}{2},
$
then the operator $B$ is invertible. This contradicts
Lemma \ref{lemma:index}, which asserts that its index is nonzero. The
contradiction proves Theorem \ref{thm:torus}.

	\begin{remark}
		Following the arguments of \cite{Wang25,WangZhu24}, one obtains the following rigidity statement: if the \((\mu,f)\)-weighted scalar curvature of \((M,g)\) is nonnegative, then \(g\) is Ricci-flat and \(f\) is constant.
	\end{remark}

To conclude this subsection, we provide an example demonstrating that the topological assumption $$i_*\colon\pi_1(S)\to\pi_1(\tor^n)\textup{ is not surjective }$$in Theorem \ref{thm:torus} is  necessary.

\begin{proposition}\label{prop:largePi1}
        Let $\tor^n$ be the $n$ torus, and $\Sigma_k$ the standard $k$-skeleton of $\tor^n$. Then $\tor^n\setminus \Sigma_k$ admit a complete metric with uniform positive scalar curvature if $n\geq 3$ and $k\geq 2$.
    \end{proposition}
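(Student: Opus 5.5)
The plan is to show that $\tor^n\setminus\Sigma_k$ is diffeomorphic to the interior of a compact handlebody whose handles all have index at most $n-3$. Then Gromov--Lawson surgery produces a metric of positive scalar curvature that is a product near the boundary, and we attach a half-infinite cylinder.

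\textbf{Step 1 (topology).} Write $\tor^n=\R^n/\Z^n$. The standard $k$-skeleton $\Sigma_k$ is the set of points with at least $n-k$ coordinates in $\Z$. Let $K$ be the dual cell complex, namely the set of points with at least $k+1$ coordinates in $\tfrac12+\Z$; it has dimension $n-k-1\le n-3$ because $k\geq 2$. The straight-line deformation inside each cube, pushing points away from $\Sigma_k$, shows the following. For a small closed regular neighborhood $N$ of $K$, the complement $\tor^n\setminus\Sigma_k$ is diffeomorphic to $\operatorname{int}N\cong N\cup_{\partial N}\partial N\times[0,\infty)$, since $\overline{\tor^n\setminus (\Sigma_k\cup N)}$ is a collar $\partial N\times(0,1]$. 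I would check this cube by cube: on each $n$-cube the pair is a product of the standard situation for $[0,1]^n$. One can also invoke regular-neighborhood theory for the pair of dual complexes. Because $K$ is a finite complex of dimension at most $n-3$, the neighborhood $N$ is a handlebody: it is built from $0$-handles $D^n$ by attaching $j$-handles with $j\le n-3$.

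\textbf{Step 2 (psc on $N$ with product boundary).} Each $0$-handle receives a torpedo metric: positive scalar curvature, and near the boundary equal to $d t^2+\delta^2 g_{\sph^{n-1}}$. Attaching a $j$-handle amounts to a surgery on the current boundary along an embedded $S^{j-1}$, of codimension $(n-1)-(j-1)=n-j\ge3$. By the Gromov--Lawson surgery theorem, in its trace form (Gajer, Walsh), the psc product metric on the boundary extends across the handle to a psc metric on the trace that is a product near both boundary components. Inducting over the handles gives a psc metric $g_N$ on $N$ that is a product $dt^2+h$ near $\partial N$, where $h$ is a psc metric on $\partial N$. This is the step I expect to be the main obstacle. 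The delicate points are keeping the product structure at every stage, and checking that the handle decomposition coming from the dual skeleton really has all handles of index at most $n-3$, which is exactly where $k\ge2$ is used.

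\textbf{Step 3 (completeness).} Glue the cylinder $\partial N\times[0,\infty)$ with the metric $dt^2+h$ onto $(N,g_N)$ along the product collar. The resulting metric on $\operatorname{int}N\cong\tor^n\setminus\Sigma_k$ is complete, since the cylinder end is complete and $N$ is compact. Its scalar curvature is at least $\min\{\min_N R(g_N),\min_{\partial N}R(h)\}>0$, a uniform positive bound because both sets are compact. This gives the required complete metric with uniformly positive scalar curvature.
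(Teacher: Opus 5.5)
Your proposal is correct and follows essentially the paper's route. The paper builds $W_k=\tor^n\setminus N(\Sigma_k)$ inductively from the ball $W_{n-1}$ by attaching handles of index $n-k-1$ (dual to the removed $(k+1)$-cells, which are exactly the handles of your regular neighborhood of the dual complex $K$), and keeps a psc metric that is a product near the boundary via Gromov--Lawson surgery plus Gajer's extension over the trace; your Step 3 (the collar identification of the end and attaching the half-cylinder $\partial N\times[0,\infty)$ to get completeness) makes explicit a step the paper leaves implicit.
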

    \begin{proof}
        Let $\tor^n=(\sph^1)^n$, and we fix a base point $*\in\sph^1$. Then the $k$-skeleton of $\tor^n$ is given by
		$$\Sigma_k=\bigcup_{\binom{n}{k}}(\sph^1)^k\times\{*\}^{n-k}.$$
		Let $N(\Sigma_k)$ be a small open neighborhood of $\Sigma_k$, and define $W_{k}\coloneqq \tor^n\setminus N(\Sigma_k)$ as the resulting manifold with boundary. We claim that $W_k$ admits a positive scalar curvature metric with a product structure near $\partial W_k$, provided $n\geq 3$ and $k\geq 2$. We construct this metric by induction. 
		
		For the base case $k=n-1$, $W_{n-1}$ is diffeomorphic to the $n$-ball $B^n$. It admits a positive scalar curvature metric with a product structure near the boundary because $\partial B^n=\sph^{n-1}$ and $n-1\geq 2$.
		
		Inductively, $W_{k}$ is obtained from $W_{k+1}$ by performing $\binom{n}{k+1}$ surgeries. Each surgery attaches a $B^{k+1}\times B^{n-k-1}$ to a $B^{k+1}\times \sph^{n-k-2}\subset \partial W_{k+1}$. On the boundary, this process removes a $B^{k+1}\times \sph^{n-k-2}$ and attaches an $\sph^{k}\times B^{n-k-1}$. More precisely, as $(n-1)$-dimensional closed manifolds, $\partial W_{k}$ is obtained from $\partial W_{k+1}$ by performing $\binom{n}{k+1}$ surgeries, each step of which removes a $B^{k+1}\times \sph^{n-k-2}$ and attaches an $\sph^{k}\times B^{n-k-1}$. Meanwhile, the interior topology evolves as $W_k=W_{k+1}\cup_{\partial W_{k+1}}T_k$, where $T_k$ is the trace of the surgeries. 
        
        By the Gromov--Lawson surgery construction, if $k\geq 2$, then $\partial W_k$ admits a uniformly positive scalar curvature metric constructed from the positive scalar curvature metric on $\partial W_{k+1}$ by the induction hypothesis. Moreover, by \cite{Gajer87}, the positive scalar curvature metric on $\partial W_{k+1}$ by the induction hypothesis extends to a positive scalar curvature metric on $T_k$, which is product near boundary. 
        Because the metric on $W_{k+1}$ is also a product near its boundary by the inductive hypothesis, we can cleanly glue these components together. Therefore, $W_k$ admits a positive scalar curvature metric that is a product near the boundary, completing the induction and the proof.
    \end{proof}

\subsection{Llarull-type theorem for weighted scalar curvature}
In this subsection, we prove Theorem \ref{thm:Llarull} and the weighted scalar curvature extremality for enlargeable manifolds.

\subsubsection{Proof of Theorem \ref{thm:Llarull}} The proof of Theorem \ref{thm:Llarull} combines the weighted scalar curvature with a weighted Lichnerowicz estimate for Callias operators. Analagous arguments also appeared in
\cite{Listing10,Llarull98,Deng21,ZhouZhu25}.

Suppose to the contrary  that $M^n$ carries a complete metric $g$ and $f\in C^\infty(M)$ satisfying 
\begin{equation}
R_{\mu, f}(g)-2|\wedge^2 d\Phi|_{\tr}>0
\end{equation}
 where some $\mu>\frac{n-1}{n}$ and $\Phi: M^n\longrightarrow \mathbb{S}^n$ is a smooth map and locally constant outside some compact set $K$ with $\deg(\Phi)\neq 0$. There is a positive constant $\lambda$ such that 
 \begin{equation}
     \mu=\frac{n+\lambda-1}{n+\lambda}. 
 \end{equation}

Let us first consider the case where the dimension $n$ of $M^n$ is even. The odd dimensional case is similar. Consider the $\mathbb Z_2$-graded spinor bundle
$
E:=S\bigl(TM\oplus \Phi^*T\sph^n\bigr),
$
and let $D$ be the associated  Dirac operator.  The Lichnerowicz
formula gives
\begin{equation}\label{eq:twisted-Lichnerowicz}
D^2=\nabla^*\nabla+\frac14R(g)+\mathcal R,
\end{equation}
where
$$
\mathcal R
=
\frac12\sum_{i<j}
c(e_i)c(e_j)c'\bigl(d\Phi(e_i)\wedge d\Phi(e_j)\bigr).
$$
Here $\{e_i\}_{i=1}^n$ is a local orthonormal frame for $TM$, and
$c$ and $c'$ denote the Clifford actions associated with $TM$ and
$\Phi^*T\sph^n$, respectively.

\begin{remark} The computation in \cite{Listing10} gives a pointwise upper bound
\begin{equation}\label{curvatuer-ptws-estimatec}
\mathcal R\leq \frac{1}{2}|\wedge^2 d\Phi|_{\tr}.
\end{equation}
Indeed, fix $x\in M$ and let
$\{\lambda_k\}_{k=1}^{n(n-1)/2}$ be the singular values of
$\wedge^2d\Phi_x$. Choose orthonormal bases ${\alpha_k}$ and ${\beta_k}$  with $
\wedge^2d\Phi_x(\alpha_k)=\lambda_k\beta_k$. Hence,

$$
\mathcal R
=
\frac12\sum_{k=1}^{n(n-1)/2}
\lambda_k c(\alpha_k)c'(\beta_k),
\quad 
|\mathcal R|
\leq
\frac12
\big(\sum_{k=1}^{n(n-1)/2}\lambda_k^2\big)^{1/2}
=
\frac12\left|\wedge^2d\Phi_x\right|_{\tr}.
$$

\end{remark}

For any smooth section $\sigma$ of $E$ with compact support, using \eqref{eq:twisted-Lichnerowicz} and  integration by part obtains that 
\begin{equation}
    ||\sigma||^2\geq \int_M ||\nabla\sigma||^2+\frac{R(g)}{4}|\sigma|^2-|\mathcal{R}||\sigma|^2
\end{equation}
Using the same computation for $|\nabla\sigma|^2$ and $|\nabla^f\sigma|^2$ in Lemma \ref{lemma:Lichnerowicz} and combining with \eqref{curvatuer-ptws-estimatec} imply that 
\begin{equation}\label{eq:weighted-Llarull-estimate}
\begin{split}
        \frac{1+\lambda}{\lambda}\|D\sigma\|^2\geq&~\frac 1 4\int_M\left(R(g)-2\Delta f-\frac{n+\lambda-1}{n+\lambda}|\nabla f|^2-2|\wedge^2 d\Phi|_{\tr}\right)|\sigma|^2\\
        =&~\frac 1 4\int_M\left(R_{\mu,f}(g)- 2|\wedge^2 d\Phi|_{\tr}\right)|\sigma|^2.
    \end{split}  
    \end{equation}

We now deform $D$ by a Callias-type potential. Write
\[
M\setminus K=Z_1\sqcup\cdots\sqcup Z_m
\]
as a disjoint union of connected components. Since $\Phi$ is locally constant outside $K$, there are points $p_i\in \mathbb{S}^n$ with $\Phi(Z_i)=p_i$. Over $Z_i$, we consequently have that 
\[
E|_{Z_i}
\cong
S(TM)\,\widehat\otimes\,S(T_{p_i}\sph^n).
\]Choose a unit vector $V_i\in T_{p_i}\sph^n$, and set
$
\mathcal V_i:=\sqrt{-1}\,c'(V_i).
$
Then $\mathcal V_i$ is an odd, self-adjoint, parallel involution on
$E|_{Z_i}$ which anticommutes with Clifford multiplication by vectors in
$TM$.
Choose smooth functions $\rho_i$ supported in $Z_i$ such that
\[
\rho_i\equiv1
\quad\text{on }Z_i\setminus \{x~|~d(x, K)\leq 1\},
\qquad
|\nabla\rho_i|\leq2.
\]
For $\varepsilon>0$, define
\begin{equation}\label{eq:Llarull-Callias}
B:=D+\varepsilon\sum_{i=1}^m\rho_i\mathcal V_i.
\end{equation}
For every $\sigma\in C_c^\infty(M,E)$, the anticommutation relations give
\begin{align}
\|B\sigma\|^2
&=
\|D\sigma\|^2
+\sum_{i=1}^m
\int_M
\left(
\varepsilon
\left\langle [D,\rho_i]\mathcal V_i\sigma,\sigma\right\rangle
+\varepsilon^2\rho_i^2|\sigma|^2
\right) \notag\\
&\geq
\|D\sigma\|^2
-2\varepsilon\sum_{i=1}^m
\int_{\{\nabla\rho_i\neq0\}}|\sigma|^2
+\varepsilon^2\sum_{i=1}^m
\int_{\{\rho_i=1\}}|\sigma|^2.
\label{eq:Callias-lower-bound}
\end{align}
Together with \eqref{eq:weighted-Llarull-estimate}, this yields
\begin{align}
\|B\sigma\|^2
\geq{}&
\frac{\lambda}{4(1+\lambda)}
\int_M
\left(
R_{\mu,f}(g)
-2\left|\wedge^2d\Phi\right|_{\tr}
\right)|\sigma|^2 \notag\\
&\quad
-2\varepsilon\sum_{i=1}^m
\int_{\{\nabla\rho_i\neq0\}}|\sigma|^2
+\varepsilon^2\sum_{i=1}^m
\int_{\{\rho_i=1\}}|\sigma|^2.
\label{eq:Callias-final-bound}
\end{align}

The potential is uniformly invertible outside $\{x~|~d(x, K)\leq 1 \}$; hence $B$ is
Fredholm. Moreover, the relative index theorem gives
\[
\operatorname{ind}(B)
=
\deg(F)\,\chi(\sph^n)\neq0.
\]
On the other hand, the sets $\{\nabla\rho_i\neq0\}$ are compact. The first term on the right-hand side of
\eqref{eq:Callias-final-bound} has a positive lower bound on their union.
Thus, for $\varepsilon>0$ sufficiently small, there is a constant
$c_\varepsilon>0$ such that
\[
\|B\sigma\|^2\geq c_\varepsilon\|\sigma\|^2
\]
for every $\sigma\in C_c^\infty(M,E)$. It follows that $B$ is
invertible, contradicting $\operatorname{ind}(B)\neq0$. This proves the
theorem when $n$ is even.

When $n$ is odd, the same construction, together with the spectral-flow
argument of \cite{LiSuWang24}, yields the desired contradiction.

\subsubsection{Area-enlargeable manifolds}

As an application of Theorem \ref{thm:Llarull}, we obtain an extremality
result for area-enlargeable manifolds, extending the corresponding result
of \cite{Su26}. We begin by recalling the relevant definition, allowing
the underlying manifold to be noncompact.

\begin{definition}\label{def:enlargeable}
Let $M^n$ be a connected manifold without boundary. A Riemannian metric
$g$ on $M$ is called $\wedge^2$-enlargeable if, for every
$\varepsilon>0$, there exist a spin covering
$
\pi_\varepsilon\colon M_\varepsilon\longrightarrow M
$
and a smooth map
$
\Phi_\varepsilon\colon
(M_\varepsilon,\pi_\varepsilon^*g)\longrightarrow(\sph^n,g_{\sph^n})
$
with $\deg\Phi_\varepsilon\neq 0$ , which  is locally constant outside a
compact set, and satisfies
\[
\left|\wedge^2d\Phi_\varepsilon\right|\leq\varepsilon.
\]
The manifold $M$ is called $\wedge^2$-enlargeable if every
Riemannian metric on $M$, not necessarily complete, is
$\wedge^2$-enlargeable.
\end{definition}

\begin{corollary}\label{coro:enlargeable}
Let $M^n$ be a $\wedge^2$-enlargeable manifold without boundary.
Then $M$ admits no complete Riemannian metric $g$ for which
\[
R_{\mu,f}(g)>0
\]
for some $f\in C^\infty(M)$ and
$\mu>\frac{n-1}{n}$.
\end{corollary}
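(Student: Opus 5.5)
We argue by contradiction and reduce to Theorem~\ref{thm:Llarull} applied on a suitable covering. Suppose $g$ is a complete metric on $M$ and $f\in C^\infty(M)$ satisfies $R_{\mu,f}(g)>0$ for some $\mu>\frac{n-1}{n}$. The difficulty is that $R_{\mu,f}(g)$ is only positive, not uniformly positive, since $M$ may be noncompact. So first fix a base point $x_0\in M$ and a compact set $K_0$, say the closed unit ball around $x_0$. Set
\[
\delta:=\min_{K_0}R_{\mu,f}(g)>0 .
\]
The enlargeability will be used only through maps whose nontrivial part can be localized.

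\textbf{Approach 1 (reduce to Theorem~\ref{thm:Llarull}).} Take $\varepsilon\ll\delta$ and, by Definition~\ref{def:enlargeable}, a spin covering $\pi_\varepsilon\colon M_\varepsilon\to M$ and a map $\Phi_\varepsilon$ with $\deg\Phi_\varepsilon\neq0$, locally constant outside a compact set, and $|\wedge^2 d\Phi_\varepsilon|\le\varepsilon$. The lifted metric $\pi_\varepsilon^*g$ is complete, and we use the lifted weight $f\circ\pi_\varepsilon$, so that $R_{\mu,f}$ lifts. Since $|\wedge^2 d\Phi|_{\tr}\le C_n|\wedge^2 d\Phi|$, we would need
\[
R_{\mu,f}-2C_n\varepsilon>0
\]
everywhere on $M_\varepsilon$. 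This fails wherever $R_{\mu,f}$ is small, so a direct application of Theorem~\ref{thm:Llarull} does not work.

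\textbf{Approach 2 (exploit that enlargeability holds for every metric).} Since the definition applies to every metric, not necessarily complete, apply it to the conformal metric
\[
h:=R_{\mu,f}(g)\,g .
\]
Then $|\wedge^2 d\Phi_\varepsilon|_{h}\le\varepsilon$ translates into
\[
|\wedge^2 d\Phi_\varepsilon|_g\le\varepsilon\,R_{\mu,f}(g),
\]
because $\wedge^2$ norms scale by the square of the conformal factor $R_{\mu,f}^{1/2}$. Then, on $M_\varepsilon$ with the lifted complete metric $g$ and lifted $f$,
\[
R_{\mu,f}-2|\wedge^2 d\Phi_\varepsilon|_{\tr}\ \ge\ (1-2C_n\varepsilon)\,R_{\mu,f}>0
\]
for $\varepsilon<1/(2C_n)$. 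Theorem~\ref{thm:Llarull} applied to $(M_\varepsilon,\pi_\varepsilon^*g,\ f\circ\pi_\varepsilon,\ \Phi_\varepsilon)$ gives a contradiction: $M_\varepsilon$ is spin and open (or closed, which is the special case $K=M_\varepsilon$), $\Phi_\varepsilon$ is locally constant outside a compact set, and $\deg\Phi_\varepsilon\ne0$.

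\textbf{Main obstacle.} The delicate step is checking that Theorem~\ref{thm:Llarull} applies on $M_\varepsilon$ as stated. This requires two things. First, the proof's final step uses that $R_{\mu,f}-2|\wedge^2 d\Phi|_{\tr}$ has a positive lower bound on the compact sets $\{\nabla\rho_i\neq0\}$, which holds by continuity and compactness. Second, the proof needs completeness of the lifted metric, which holds for coverings. I would also double check that the definition's norm $|\wedge^2 d\Phi|$ is the operator norm, so that a dimensional constant $C_n$ relates it to the trace norm. If $M_\varepsilon$ happens to be compact, Theorem~\ref{thm:Llarull} still applies verbatim, with $\Phi$ locally constant on the empty set.
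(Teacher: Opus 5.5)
Your Approach 2 is correct and is essentially the paper's proof. You apply $\wedge^2$-enlargeability to the incomplete metric $R_{\mu,f}(g)\,g$ to get $|\wedge^2 d\Phi_\varepsilon|_g\le\varepsilon R_{\mu,f}(g)$ on the cover, compare the trace norm with the operator norm via a dimensional constant, and derive a contradiction with Theorem~\ref{thm:Llarull} for the complete lifted metric and lifted weight. Approach 1 and the constant $\delta$ are not needed.
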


\begin{proof}
Suppose, to the contrary, that $g$ is a complete metric on $M$ and
that
$
R_{\mu,f}(g)>0
$
for some $f\in C^\infty(M)$ and $\mu>\frac{n-1}{n}$. Consider the  metric
\[
g':=R_{\mu,f}(g)\,g.
\]
Although $g'$ need not be complete, it is a Riemannian metric on $M$.
By $\wedge^2$-enlargeability, for every $\varepsilon>0$, there exist a
spin covering
$
\pi_\varepsilon\colon M_\varepsilon\longrightarrow M
$
and a smooth map
$
\Phi_\varepsilon\colon
(M_\varepsilon,\tilde{g})
\longrightarrow
(\sph^n,g_{\sph^n})
$ with nonzero degree, is locally constant outside a compact set, and
satisfies
\[
\left|\wedge^2d\Phi_\varepsilon\right|_{\pi_\varepsilon^* g'}
\leq\varepsilon \qquad \widetilde g'
=
R_{\mu,\widetilde f}(\widetilde g)\,\widetilde g\]
where 
$
\widetilde g:=\pi_\delta^*g$ and $
\widetilde f:=\pi_\delta^*f
$. 
Regard $\Phi_\varepsilon$ as a map defined on
$(M_\varepsilon,\widetilde g)$, and denote the resulting map by
$\Phi_\varepsilon$. Choose
$0<\varepsilon<\frac{1}{n(n-1)}$ and obtain
\[
\left|\wedge^2d\Phi_\varepsilon\right|_{\widetilde g}
\leq
\varepsilon\,R_{\mu,\widetilde f}(\widetilde g),
\qquad
R_{\mu,\widetilde f}(\widetilde g)
-n(n-1)|\wedge^2d\Phi_\varepsilon|_{\tilde g}
\geq
R_{\mu,\widetilde f}(\widetilde g)
\bigl(1-n(n-1)\delta\bigr)
>0.\]
Since
$
2|\wedge^2d\Phi_\varepsilon|_{\tr}
\leq
n(n-1)\left|\wedge^2d\Phi_\varepsilon\right|,
$
,we obtain
$
R_{\mu,\widetilde f}(\widetilde g)
-
2|\wedge^2d\Phi_\varepsilon|_{\tr}
>0.
$
The lifted metric $\widetilde g$ is complete, whereas $\Phi_\varepsilon$ has
nonzero degree and is locally constant outside a compact set. This
contradicts Theorem \ref{thm:Llarull}.
\end{proof}

We conclude this section by comparing the preceding noncompact results with the closed
case.

\begin{remark}\label{rem:closed-conformal}
On a closed manifold, weighted scalar-curvature extremality follows
directly from the classical scalar-curvature results under the weaker
condition
$
\mu>\frac{n-2}{n-1}.
$

Indeed, for \(f\in C^\infty(M)\), the conformal metric
$
g_f:=e^{-\frac{2f}{n-1}}g
$
satisfies
\begin{align}
R(g_f)
&=
e^{\frac{2f}{n-1}}
\left(
R(g)+2\Delta f-\frac{n-2}{n-1}|\nabla f|^2
\right) \notag\\
&=
e^{\frac{2f}{n-1}}
\left(
R_{\mu,f}(g)
+
\left(\mu-\frac{n-2}{n-1}\right)|\nabla f|^2
\right).
\label{eq:weighted-conformal-change}
\end{align}
Thus, if \(R_{\mu,f}(g)>0\) and
\(\mu>\frac{n-2}{n-1}\), then \(g_f\) has positive scalar curvature.

Similarly, suppose that
$
\Phi\colon(M,g)\longrightarrow(\sph^n,g_{\sph^n})
$
satisfies
$$
R_{\mu,f}(g)
-
2\left|\wedge^2d\Phi\right|_{\tr}
>0.
$$
Viewing the same map as $
\Phi\colon(M,g_f)\longrightarrow(\sph^n,g_{\sph^n}),$
we have
$$
\left|\wedge^2dF\right|^{g_f}_\tr
=
e^{\frac{2f}{n-1}}
\left|\wedge^2dF\right|^g_\tr.
$$
Consequently,
\begin{align*}
R(g_f)
-
2\left|\wedge^2d\Phi\right|^{g_f}_{\tr}
=
e^{\frac{2f}{n-1}}
\Bigg(
R_{\mu,f}(g)
-
2\left|\wedge^2d\Phi\right|^g_\tr+
\left(\mu-\frac{n-2}{n-1}\right)|\nabla f|^2
\Bigg)>0.
\end{align*}
The classical scalar-curvature extremality theorems therefore apply
directly.

This conformal reduction argument does not extend immediately to the noncompact
setting, since the metric \(g_f\) need not be complete even when \(g\) is
complete. The direct arguments in Theorems \ref{thm:torus} and
\ref{thm:Llarull} instead require the stronger threshold
$
\mu>\frac{n-1}{n}.
$ This threshold is in fact optimal in the noncompact setting, which will be discussed elsewhere.
\end{remark}
	
\section{Blow-up of Singular set}
In this section, we use conformal deformations to construct complete
metrics of positive weighted scalar curvature from $L^\infty$-metrics
whose scalar curvature is positive on the regular set. We begin with a
general blow-up result for the singular set.

\begin{proposition}\label{prop:weightedSc}
Let $(M^n, g)$ be a closed $L^\infty$-manifold and $\mathcal{S}$ be the singular set satisfying the Hausdorff dimension upper bound 
\begin{equation}
    \dim_{\mathcal{H}}(\mathcal{S})\leq n-3+\frac{2}{n}. 
\end{equation}
If the scalar curvature $R(g)$ is positive on $M\setminus \mathcal{S}$, then there exist a constant $\mu>\frac{n-1}{n}$, a function $f\in C^\infty(M\setminus \mathcal{S})$ and a complete metric $g'$ on $M\setminus \mathcal{S}$ satisfying that on $M^n\setminus \mathcal{S}$
\begin{equation}
    R_{\mu, f}(g')>0
\end{equation}
\end{proposition}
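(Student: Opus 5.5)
The plan is to make $M\setminus\mathcal S$ complete by a conformal blow-up $g'=u^{2a}g$, where $u$ is a positive \emph{supersolution-type potential} of $g$ that is singular exactly along $\mathcal S$. The weight is taken to be a multiple of the conformal exponent, $f=\beta\log u^{a}$. The constants $a,\beta,\mu$ will be tuned so that the only gradient term has a nonnegative coefficient. This avoids any pointwise gradient bound on $u$ near $\mathcal S$, which is out of reach since $g$ is only $L^\infty$ there.

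\textbf{Step 1: the algebra.} For $g'=e^{2w}g$ and $f=\beta w$, the standard conformal formulas give
\begin{equation*}
e^{2w}R_{\mu,f}(g')=R(g)+2\bigl(\beta-(n-1)\bigr)\Delta_g w+Q(\beta)|\nabla w|_g^2,
\end{equation*}
where
\begin{equation*}
Q(\beta)=2(n-2)\beta-\mu\beta^2-(n-1)(n-2).
\end{equation*}
Now take $w=a\log u$ with $-\Delta_g u+cu=0$ on $M\setminus\mathcal S$ for a constant $c>0$. Then $\Delta_g w=ac-a|\nabla\log u|^2$, and hence
\begin{equation*}
e^{2w}R_{\mu,f}(g')=R(g)+2a\bigl(n-1-\beta\bigr)c+\Bigl(a^2Q(\beta)-2a\bigl(\beta-(n-1)\bigr)\Bigr)|\nabla\log u|^2 .
\end{equation*}
Suppose $\beta<n-1$ and
\begin{equation*}
aQ(\beta)\ \geq\ 2\bigl(\beta-(n-1)\bigr).
\end{equation*}
Then the right-hand side is at least $R(g)+2a(n-1-\beta)c>0$, using $R(g)>0$ on the regular set. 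So $R_{\mu,f}(g')>0$ with no control needed on $\nabla u$. Regularity is clear: $u$ is smooth on $M\setminus\mathcal S$ by elliptic regularity, so $f,g'$ are smooth there.

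\textbf{Step 2: the potential.} Let $G$ be the Green function of $-\Delta_g+c$ on the closed $L^\infty$-manifold. By Littman--Stampacchia--Weinberger, uniform ellipticity gives $G(x,y)\simeq d(x,y)^{2-n}$, with constants depending only on $\Lambda$. I would set
\begin{equation*}
u=1+\sum_j\sum_i w_{j}\,G(\cdot,y_{ij}),
\end{equation*}
where $\{B(y_{ij},2^{-j})\}_i$ are efficient covers of $\mathcal S$ at scale $2^{-j}$ and the weights $w_j$ are summable against these covers. The additive constant is harmless after replacing it by the Green potential of a smooth positive bump, so that the equation $-\Delta_g u+cu=0$ off $\mathcal S$ still holds. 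Equivalently, $u$ is the Green potential of a measure $\nu$ on $\mathcal S$ with $\nu(B(y,r))\lesssim r^{D}$, where $D$ is slightly larger than $\dim_{\mathcal H}\mathcal S$. The weights should be chosen so that
\begin{equation*}
u(x)\gtrsim d(x,\mathcal S)^{-(n-2-D)}
\end{equation*}
near $\mathcal S$, while $u$ stays finite on $M\setminus\mathcal S$.

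\textbf{Step 3: completeness and the numerology.} If $u\gtrsim d^{-(n-2-D)}$, then $u^{a}\gtrsim d^{-1}$ as soon as $a\ge 1/(n-2-D)$. With $g\simeq g_0$, every $g'$-path reaching $\mathcal S$ then has infinite length, so $g'$ is complete. It remains to check that, for $a=1/(n-2-D)$, some $\beta<n-1$ satisfies $aQ(\beta)\ge 2(\beta-n+1)$ for some $\mu>\frac{n-1}{n}$. At the endpoint $D=n-3+\frac2n$ one has $a=\frac{n}{n-2}$, and the condition becomes a quadratic inequality in $\beta$ that holds at $\mu=\frac{n-1}{n}$. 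For instance, for a smooth codimension-three piece ($a=1$) the maximum over $\beta$ is $\frac{(n-3)^2}{\mu}-(n-1)(n-4)$, which exceeds $0$ at $\mu=\frac{n-1}{n}$ by the margin $4/n$. Since the hypothesis is a non-strict bound on the Hausdorff dimension, I expect to use this strict margin to absorb the $\epsilon$-loss in $D$ and then perturb $\mu$ slightly above $\frac{n-1}{n}$.

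\textbf{Main obstacle.} The hard step is Step 2. A Hausdorff-dimension (rather than Minkowski) bound gives only upper bounds on cover counts, whereas completeness needs a uniform \emph{lower} bound on $u$ along every path to $\mathcal S$. My first attempt would be to use, at each scale, a cover of $\mathcal S$ by balls of radius comparable to $2^{-j}$ with centers on $\mathcal S$. Every point $x$ with $d(x,\mathcal S)\sim2^{-j}$ lies within $C2^{-j}$ of some center, which gives $u(x)\gtrsim w_j2^{j(n-2)}$. The weights $w_j$ would then be chosen decreasing slowly enough to keep $u$ locally finite off $\mathcal S$, yet with $w_j2^{j(n-2)}\gtrsim 2^{j(n-2-D)}$. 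If this uniform bound fails, I would fall back on path-wise completeness: show that $\int_\gamma u^{a}\,ds=\infty$ along any curve $\gamma$ converging to $\mathcal S$, using a dyadic decomposition of $\gamma$ and the Green-function lower bound on each dyadic annulus.
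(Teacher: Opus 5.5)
There is a sign error in Step 1 and a genuine gap in Step 2, the heart of the proposition.

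\textbf{Step 1.} The zeroth-order term has the wrong sign. From $\Delta_g w=ac-a|\nabla\log u|^2$ you get $2(\beta-(n-1))\Delta_g w=-2a(n-1-\beta)c+2a(n-1-\beta)|\nabla\log u|^2$. So the correct identity is $e^{2w}R_{\mu,f}(g')=R(g)-2a(n-1-\beta)c+\bigl(a^2Q(\beta)+2a(n-1-\beta)\bigr)|\nabla\log u|^2$.

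Taking $\beta\ge n-1$ does not help. For $\mu>\frac{n-1}{n}$ one has $Q(n-1)=(n-1)\bigl(n-2-\mu(n-1)\bigr)<0$, and the vertex $\frac{n-2}{\mu}$ of $Q$ lies below $n-1$. Hence the gradient coefficient is negative there, while $|\nabla\log u|$ is unbounded near $\mathcal S$ because $\log u\to+\infty$. So a constant $c>0$ always costs you. You would need $R(g)>2a(n-1-\beta)c$ up to $\mathcal S$, and the hypothesis $R(g)>0$ pointwise does not give that. (In the application to Theorem \ref{B}, Theorem \ref{dichotomy} supplies a positive lower bound and a small $c$ would do, but the proposition as stated does not assume one.)

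The paper avoids this by making the potential exactly harmonic near $\mathcal S$. It takes the Dirichlet Green function of $-\Delta_g$ on a neighborhood $U_2$ and sets $w=1+\tau\varphi G_{\mathcal S}$ with $\varphi\equiv1$ on $U_1$. It then chooses $b=a\beta$ so that your gradient coefficient vanishes exactly. Your Step 3 numerology agrees with this: the condition is $\mu<\frac{(a(n-2)-1)^2}{(n-1)a(a(n-2)-2)}$, which exceeds $\frac{n-1}{n}$ iff $a<\frac{n}{n-2}$. The only remaining error, $2(b-a(n-1))\tau\Delta_g(\varphi G_{\mathcal S})/w$, lives on the compact set $\overline{U_2\setminus U_1}$ of the regular part. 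There $R(g)$ has a positive minimum, so a small $\tau$ absorbs it.

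\textbf{Step 2.} Your primary plan fails under a Hausdorff-dimension hypothesis. With fixed-radius covers at scale $2^{-j}$ and $w_j\gtrsim 2^{-jD}$, the total mass $\sum_j w_jN(\mathcal S,2^{-j})$ is finite only if the upper Minkowski dimension of $\mathcal S$ is at most $D$. The hypothesis does not control this, and if the mass is infinite then $u\equiv+\infty$, since the Green function of $-\Delta_g+c$ on a closed manifold is bounded below. Your "equivalent'' condition $\nu(B(y,r))\lesssim r^D$ with $D>\dim_{\mathcal H}\mathcal S$ is also impossible for a nonzero measure on $\mathcal S$, by the mass distribution principle. Completeness needs \emph{lower} bounds on $\nu(B_r(x))$ for $x\in\mathcal S$.

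The missing idea is the paper's weighted atomic measure together with a Wolff-potential argument.
\begin{itemize}
\item Take Hausdorff covers $\{B_{r_{\ell,j}}(x_{\ell,j})\}_j$ with $r_{\ell,j}\le 2^{-\ell}$ and $\sum_j r_{\ell,j}^\sigma\le 1$, and give each $x_{\ell,j}$ mass $r_{\ell,j}^{\,n-2-1/a}$. This measure is finite because $n-2-1/a>\sigma$.
\item For $x\in\mathcal S$ and each $\ell$, the cover ball of radius $s_\ell$ containing $x$ gives $\bigl(\nu(B_r(x))r^{2-n}\bigr)^a\ge c\,s_\ell^{-1}$ on $[s_\ell,2s_\ell]$. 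So every scale contributes a fixed amount, and $W(x)=\int_0^1\bigl(\nu(B_r(x))r^{2-n}\bigr)^a\,dr=+\infty$.
\item Along a unit-speed curve $\gamma(t)\to x$ one has $G_{\mathcal S}(\gamma(t))\gtrsim \nu(B_{r_t}(x))r_t^{2-n}$ with $r_t\sim L-t$. Hence $\int_0^L w(\gamma(t))^a\,dt\gtrsim W(x)=\infty$.
\end{itemize}
No pointwise lower bound in terms of $d(\cdot,\mathcal S)$ is needed, and none is available from the hypothesis.

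\textbf{Endpoint.} Your remark about absorbing the endpoint with a margin is mistaken: the surplus you compute is at $a=1$. If $\dim_{\mathcal H}\mathcal S=n-3+\frac{2}{n}$, finiteness forces $a>\frac{n}{n-2}$ in this construction, and then no $\mu>\frac{n-1}{n}$ is admissible. The argument really uses strict inequality. The paper's proof does too, since it picks $\sigma\in(\dim_{\mathcal H}\mathcal S,\,n-3+\frac{2}{n})$.
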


When $\mathcal S$ is an embedded simplicial subcomplex of dimension at
most $n-3$, the preceding construction can be refined to produce a
complete conformal metric with quantitative scalar-curvature estimates
near the singular set.

\begin{proposition}\label{simplical-blow-up}
    Let $(M^n, g)$ be a closed $L^\infty$-manifold and $\mathcal{S}$ be the singular set which is an embedded simplical subcomplex of dimension at most $n-3$. If the scalar curvature $R(g)$ is positive on $M^n\setminus \mathcal{S}$,  then there is  a constant $\mu>\frac{n-1}{n}$ with the followint property. For any $\varepsilon>0$, there exist constants $a>0$ and $C>0$,  functions $u_\varepsilon, f_\varepsilon\in C^\infty(M\setminus \mathcal{S})$ and a neighborhood $U_1$ of $\mathcal S$ such that 
    \begin{itemize}
    \item[(1)] $u_\varepsilon>0$, $u_\varepsilon=1$ outside a neighborhood of $\mathcal{S}$ and \[u_\varepsilon(x)\rightarrow +\infty \text{ as } x\rightarrow \mathcal{S};\]
    \item[(2)]  the conformal  metric $g'_\varepsilon=u^{2a}_\varepsilon g$  is complete on $M\setminus \mathcal{S}$;
    \item[(3)] the weighted scalar curvature of $g'_\varepsilon$ satisfies 
    \begin{equation}\label{weighted-sc-curvature-ineq}
        R_{\mu, f_\varepsilon}(g'_\varepsilon)\geq u_\varepsilon^{-2a}(R(g)-\varepsilon) \text{ on } M\setminus U_1  \textup{ and } R_{\mu, f_\varepsilon}(g'_\varepsilon)\geq C u_\varepsilon^{-2a}d(x,\mathcal{S})^{-2} \text{ on } U_1,
    \end{equation} 
    \end{itemize}
\end{proposition}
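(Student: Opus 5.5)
The plan is to take the weight to be a multiple of the conformal exponent, so that $R_{\mu,f}(g')$ reduces to $R(g)$ plus a quadratic form in $\nabla\log u$, and then to choose $u$ to be a harmonic potential of $\mathcal S$. For $g'=e^{2\varphi}g$ with $\varphi=a\log u$ and $f=\beta\varphi$, I would use the standard formulas
\[
R(g')=e^{-2\varphi}\bigl(R-2(n-1)\Delta\varphi-(n-1)(n-2)|\nabla\varphi|^2\bigr),
\qquad
\Delta_{g'}f=e^{-2\varphi}\bigl(\Delta f+(n-2)\langle\nabla\varphi,\nabla f\rangle\bigr).
\]
They give
\[
u^{2a}R_{\mu,f}(g')=R(g)-2(n-1-\beta)\Delta\varphi+Q_\mu(\beta)|\nabla\varphi|^2,
\qquad Q_\mu(\beta)=2\beta(n-2)-(n-1)(n-2)-\mu\beta^2 .
\]
Derivatives of $g$ may blow up near $\mathcal S$, so I would not build $u$ from a distance function. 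Instead I would take $u$ to be $g$-harmonic on $M\setminus\mathcal S$. Then $\psi:=\log u$ satisfies $\Delta\psi=-|\nabla\psi|^2$, and
\[
u^{2a}R_{\mu,f}(g')=R(g)+\Bigl(2(n-1-\beta)a+a^2Q_\mu(\beta)\Bigr)|\nabla\psi|^2 .
\]
Completeness needs $u^a\gtrsim d(\cdot,\mathcal S)^{-1}$, so I take $a=1$ and $u\sim d^{-1}$ near $\mathcal S$. The coefficient is then $-\mu\beta^2+2(n-3)\beta+2(n-1)-(n-1)(n-2)$. Maximizing at $\beta=(n-3)/\mu$ and setting $\mu=\frac{n-1}{n}$ gives the value $\frac{n(n-3)^2-(n-1)^2(n-4)}{n-1}=\frac{4}{n-1}>0$. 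By continuity I fix $\mu$ slightly larger than $\frac{n-1}{n}$ and $\beta$ so that this coefficient is some $c_0>0$. This explains the threshold, and also why codimension three is exactly borderline.

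Next I construct $u$. Let $\nu$ be the $(n-3)$-dimensional Hausdorff measure on the top simplices of $\mathcal S$. In a neighbourhood $U_2$ of $\mathcal S$, I take $h=\int_{\mathcal S}G(\cdot,y)\,d\nu(y)$, where $G$ is the Green function of $\Delta_g$ with Dirichlet data on $\partial U_2$. The Aronson/Littman--Stampacchia--Weinberger bounds $G(x,y)\simeq d(x,y)^{2-n}$ hold for $L^\infty$ metrics. Since $\mathcal S$ is a Lipschitz simplicial complex with $\nu(B_r(y))\simeq r^{n-3}$ (the appendix potential estimates), they give $h\simeq d(\cdot,\mathcal S)^{-1}$ near $\mathcal S$. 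Then I set $u_\varepsilon=1+t\,\eta h$, with $\eta$ a cut-off that is $1$ near $\mathcal S$ and $0$ outside $U_2$, and $t>0$ small.

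On the region $U_1=\{\eta=1\}$, $u_\varepsilon$ is $g$-harmonic. The formula above then gives $R_{\mu,f}(g')\ge c_0u^{-2}|\nabla\log u|^2$, because $R(g)>0$ there. This reduces the desired bound $Cu^{-2}d^{-2}$ to a gradient lower bound $|\nabla\log h|\ge c/d$ near $\mathcal S$. On $M\setminus U_1$, $u_\varepsilon=1+O(t)$ with $C^2$ norms $O(t)$. Since $R(g)>0$ is smooth and bounded below on the compact set $\operatorname{supp}\nabla\eta$, the terms $\Delta\varphi$ and $|\nabla\varphi|^2$ are $O(t)$ there, so choosing $t=t(\varepsilon)$ small gives $R_{\mu,f}(g')\ge u^{-2}(R(g)-\varepsilon)$. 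Items (1) and (2) are immediate from $h\to\infty$ and from $u\gtrsim d^{-1}$, since $\int_0 d^{-1}\,dd=\infty$. The smallness of $t$ costs nothing near $\mathcal S$, because it only shrinks $U_1$ and changes $C$.

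The main obstacle is the lower gradient bound $|\nabla h|\gtrsim d^{-2}$, equivalently $|\nabla\log h|\gtrsim d^{-1}$, uniformly near $\mathcal S$. The upper bound follows from interior gradient estimates on $B_{d/2}(x)\subset M\setminus\mathcal S$, but a lower bound for a harmonic function with only an $L^\infty$-controlled metric is not automatic. My first attempt is a blow-up/compactness argument at the scale $d(x,\mathcal S)$. In the smooth regular part, rescaled metrics near a point of an open $(n-3)$-simplex converge after rescaling. I would argue that the limiting potential behaves like $c\,r^{-1}$ in the normal $\mathbb R^3$ directions, with nonvanishing radial derivative, and use a comparison (barrier) argument against $d^{-1}\pm d^{-1+\delta}$ along normal rays. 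Near lower-dimensional faces I would induct on the skeleton, which is where the simplicial structure is used. If that fails, the fallback is to avoid needing the gradient lower bound pointwise. I would replace $h$ by a smoothed regularized distance $\rho\simeq d$ with $|\nabla\rho|\simeq1$. I would then absorb the uncontrolled $\Delta_g\rho$ by adding a small multiple of the harmonic $h$, keeping $-\Delta\psi$ controlled below by $|\nabla\psi|^2$ up to a small factor that the margin $c_0>0$ can absorb.
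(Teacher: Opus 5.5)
Your conformal identity is correct, including the margin $4/(n-1)$ at $a=1$, $\beta=(n-3)/\mu$. Items (1), (2) and the estimate on $M\setminus U_1$ also go through, once you put $\mathcal H^{\dim\sigma}$ on \emph{every} simplex rather than only the top ones, so that $h\to\infty$ on all of $\mathcal S$.

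The gap is the step you yourself flag. On $U_1$ your identity reads $u^{2}R_{\mu,f}(g')=R(g)+c_0|\nabla\log u|^2$. Since $R(g)>0$ has no quantitative lower bound near $\mathcal S$, the required bound $Cu^{-2}d^{-2}$ is effectively equivalent to a pointwise lower bound $|\nabla h|\gtrsim d^{-2}$. That is first-order information about a $g$-harmonic function, and nothing in the hypotheses provides it. The metric $g$ is smooth off $\mathcal S$, but its derivatives are uncontrolled as one approaches $\mathcal S$. Rescalings at scale $d(x,\mathcal S)$ are therefore only bounded in $L^\infty$ and need not converge in any $C^k$ sense, contrary to what you assume. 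This undermines each of your attempts:
\begin{itemize}
\item Your blow-up argument yields at best $C^{0,\alpha}$ convergence to a solution of a measurable-coefficient equation. That limit need not look like $c\,r^{-1}$, and $C^{0,\alpha}$ convergence does not transmit gradient lower bounds.
\item The barrier comparison with $d^{-1}\pm d^{-1+\delta}$ requires computing $\Delta_g d^{-1}$, which involves $\partial g$.
\item You have no way to exclude critical points of $h$ arbitrarily close to $\mathcal S$, where your identity yields only $R(g)$.
\item Even the upper gradient bound you quote is not uniform, since interior estimates on $B_{d/2}(x)$ depend on derivatives of $g$ there.
\item The fallback fails for the same reason. $\Delta_g\rho$ for a regularized distance involves first derivatives of $g$ and can be arbitrarily large of either sign. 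Adding a multiple of the harmonic $h$ leaves $\Delta_g u$ unchanged, so it cannot absorb that term into $c_0$.
\end{itemize}

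The missing idea is to build the conformal factor so that only zeroth-order Green function bounds and \emph{prescribed Laplacians} are ever used.
\begin{itemize}
\item The paper takes $f=b\log w$ with $b$ chosen so that the coefficient of $|\nabla\log w|^2$ vanishes \emph{exactly}. This is possible when $a<\frac{n}{n-2}$ and $\mu$ is slightly above $\frac{n-1}{n}$, and then $b<a(n-1)$. Hence $w^{2a}R_{\mu,f}(g')=R(g)+2\bigl(a(n-1)-b\bigr)\frac{-\Delta_g w}{w}$.
\item It then sets $w=1+\tau\chi\psi_{\mathcal S}$, where $\psi_{\mathcal S}$ is the Green potential solving $-\Delta_g\psi_{\mathcal S}=\phi_{\mathcal S}:=\sum_j\int_{\sigma_j}d(\cdot,p)^{-(\ell_j+3-\eta)}\,d\mathcal H^{\ell_j}(p)$. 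Here $\eta\in(0,1)$ is a small exponent loss (the paper's notation, unrelated to your cut-off).
\item The appendix estimates give $\phi_{\mathcal S}\asymp d^{\eta-3}$ and $\psi_{\mathcal S}\asymp d^{\eta-1}$. Hence $-\Delta_g w/w\gtrsim d^{-2}$ on $U_1$, with no gradient information at all.
\item The loss $\eta>0$ is needed because $d(\cdot,p)^{-n}$ is not locally integrable when $\ell_j=n-3$. It is compensated by taking $a>\frac{1}{1-\eta}$, which restores completeness. This is why the paper cannot use $a=1$ with an exact $d^{-1}$ profile as you do.
\end{itemize}
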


\subsection{Laplace-Beltrami operators for  {$L^\infty$}-metrics} 
Let $(M^n,g)$ be a closed
$L^\infty$-manifold with background metric $g_0$, and let $\mathcal S$ denote its singular set.  In order to build
the desired conformal factor, we will employ Green's functions with pole at points of $\mathcal S$. Choose precompact open neighborhoods $U_1$ and $U_2$ of $\mathcal{S}$, such that $\partial U_2$ is smooth and 
\begin{equation}
        \mathcal S\subset U_1,\qquad
        \overline{U}_1\subset U_2 .
\end{equation}
The existence of Green's functions for uniformly elliptic operators with $L^\infty$ coefficients, and their basic properties, has been obtained in \cite[Theorem~1.1]{MR657523}, see also \cite{MR161019} and \cite[Lemma~A.1]{ChengLeeTam2022}. We recall the relevant form of this result, needed for our purposes, here. 

\begin{lemma}[Existence and estimates for the Green's function]
\label{exist-Green}
Let \((U_2,g|_{U_2})\) and \(\mathcal S\) be as above.  Then there exists a
Dirichlet Green's function
\begin{equation}
        G:U_2\times U_2\to \mathbb R\cup\{\infty\}
\end{equation}
associated with $-\Delta_g$ on \(U_2\).  Moreover, for each
\(x\in \mathcal S\), the function
\begin{equation}
        y\mapsto G(x,y)
\end{equation}
is positive and \(g\)-harmonic on \(U_2\setminus \mathcal S\).  In addition,
there exists a constant \(C>0\), independent of \(x\in\mathcal S\), such that
for all \(x\in\mathcal S\) and \(y\in U_1\setminus\mathcal S\),
\begin{equation}\label{f0a-9fjh-9qjhh}
        C^{-1} d_{g_0}(x,y)^{2-n}
        \leq G(x,y)
        \leq C d_{g_0}(x,y)^{2-n}.
\end{equation}
On the regular set \(U_2\setminus\mathcal S\), the function \(G(x,\cdot)\) is
smooth.
\end{lemma}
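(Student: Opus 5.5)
The plan is to reduce to the classical theory for divergence-form operators with bounded measurable coefficients. In local coordinates on $U_2$ we have $\Delta_g u=|g|^{-1/2}\partial_i(|g|^{1/2}g^{ij}\partial_j u)$. Since $\Lambda^{-1}g_0\le g\le\Lambda g_0$ a.e., the matrix $a^{ij}:=|g|^{1/2}g^{ij}$ is measurable, uniformly elliptic and bounded, with constants depending only on $\Lambda$ and $g_0$. Likewise $\rho:=|g|^{1/2}$ is bounded above and below. Thus $-\Delta_g$ on $U_2$ is, up to the bounded positive weight $\rho$, a uniformly elliptic divergence-form operator $L=-\partial_i(a^{ij}\partial_j\,\cdot)$. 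Cover $\overline U_2$ by finitely many coordinate charts. Because $\partial U_2$ is smooth, it is a regular domain for the Dirichlet problem.

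The first step is existence and the two-sided bound. I would invoke Grüter--Widman \cite[Theorem~1.1]{MR657523}, or Littman--Stampacchia--Weinberger \cite{MR161019}. In charts, and globally on $U_2$ via the $W^{1,2}_0(U_2)$ Lax--Milgram formulation, these give a Dirichlet Green's function $G(x,\cdot)\ge0$ for every $x\in U_2$. It satisfies $\int\langle\nabla G(x,\cdot),\nabla\varphi\rangle_g\,dV_g=\varphi(x)$ for $\varphi\in C^\infty_c(U_2)$, and the pointwise bound $G(x,y)\le C\,d_{g_0}(x,y)^{2-n}$. The lower bound $G(x,y)\ge C^{-1}d_{g_0}(x,y)^{2-n}$ holds whenever $d_{g_0}(y,\partial U_2)\ge 2d_{g_0}(x,y)$, or more generally when $y$ stays a fixed distance from $\partial U_2$.

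The key point is uniformity. The constants in Grüter--Widman depend only on $n$, the ellipticity constants, and the distance of $x,y$ to the boundary. Since $\overline U_1$ is a compact subset of $U_2$, we have $\delta:=d_{g_0}(\overline U_1,\partial U_2)>0$. For $x\in\mathcal S$ and $y\in U_1$ with $d_{g_0}(x,y)<\delta/2$ both bounds follow. For $d_{g_0}(x,y)\ge\delta/2$, the bound $d_{g_0}^{2-n}$ is comparable to a constant, so it remains to show $G$ is bounded above and below on the compact set $\{(x,y)\in\mathcal S\times\overline U_1: d_{g_0}(x,y)\ge\delta/2\}$. The upper bound is immediate from the global estimate. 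For the lower bound I would use the Harnack inequality (De Giorgi--Nash--Moser, \cite[Theorem~8.20]{GT-Book}) for the $L$-harmonic function $G(x,\cdot)$ on $U_2\setminus\{x\}$, along a chain of balls of fixed size joining the sphere $\{d_{g_0}(x,\cdot)=\delta/4\}$ to $\overline U_1$. This requires $U_2$ to be connected, or at least each component of $\overline U_1$ to lie in one component of $U_2$ together with the relevant points; I would shrink $U_2$ to the union of the components meeting $\mathcal S$. The number of balls is bounded uniformly by compactness. This gives $G\ge c>0$ there, with $c$ independent of $x$.

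The remaining properties follow quickly. Harmonicity of $G(x,\cdot)$ on $U_2\setminus\{x\}$, and hence on $U_2\setminus\mathcal S$ for $x\in\mathcal S$, is part of the weak formulation. Positivity follows from the strong maximum principle and Harnack again. On the regular set, where $g$ is smooth, a weak $W^{1,2}_{\rm loc}$ solution of $\Delta_g G=0$ is smooth by interior elliptic regularity, so $G(x,\cdot)\in C^\infty(U_2\setminus\mathcal S)$. The main obstacle is the uniform lower bound away from the diagonal, which is the Harnack chain argument above. A secondary subtlety is that Grüter--Widman's bounds hold only for almost every $y$ in the measurable-coefficient setting. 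This is resolved because on $U_1\setminus\mathcal S$ the function $G(x,\cdot)$ is continuous, so the a.e. bounds hold everywhere there.
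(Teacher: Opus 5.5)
Your proposal follows the paper's route. The paper gives no argument beyond citing Gr\"uter--Widman, Littman--Stampacchia--Weinberger and Cheng--Lee--Tam. Your split into two regimes is a correct way to make the independence of $C$ from $x\in\mathcal S$ explicit. For $d_{g_0}(x,y)<\delta/2$ the Gr\"uter--Widman two-sided bounds apply, because $\delta=d_{g_0}(\overline U_1,\partial U_2)>0$. The complementary regime is handled by a Harnack chain of uniformly bounded length.

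One correction. You are right that connectivity is needed, but discarding the components of $U_2$ that miss $\mathcal S$ does not provide it. Suppose $\mathcal S$ meets two different components $A$ and $B$ of $U_2$. Then for $x\in\mathcal S\cap A$, the Dirichlet Green's function $G(x,\cdot)$ vanishes identically on $B$. So positivity and the lower bound in \eqref{f0a-9fjh-9qjhh} fail for $y\in (B\cap U_1)\setminus\mathcal S$, whatever the proof. The lemma tacitly requires $U_1$ to lie in a single component of $U_2$.

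Since $M$ is connected and the choice of $U_2$ is yours, you can arrange this. For instance, take $U_2=M\setminus\overline B$ for a small coordinate ball $B$ disjoint from $\overline U_1$; your Harnack chain then goes through as written. Alternatively, keep the lower bound only for $d_{g_0}(x,y)<\delta/2$, where your local argument needs no connectivity at all. That range suffices for Steps~1 and~2 of the proof of Proposition~\ref{prop:weightedSc}.
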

 \subsection{Blow-up of the singular set with low regularity}
 In this subsection, we  construct a Green's function with pole $\mathcal{S}$ and use it to perform a conformal change to blow up the metric near the singular set $\mathcal{S}$. This approach was originally  proposed in \cite[Lemma 2.3]{WangXie24} and further developed in \cite{BiHaoHeShiZhu26}.

\begin{proof}[Proof of Proposition \ref{prop:weightedSc}]\ 

\noindent\emph{Step 1: Construction of a Green's function having pole $\mathcal S$.} 
Let $\sigma$ be a positive number in the interval $ (\dim_{\mathcal{H}}\mathcal{S}, n-3+\frac{2}{n})$, and choose a value $a$
satisfying
\begin{equation}
   \max\left\{1,\frac{2}{n-2},\frac{1}{\,n-2-\sigma\,}\right\} <a< \frac{n}{n-2}.
\end{equation}
%Let $a\in (1,\frac{n}{n-2})$, and choose $\sigma\in (n-3,n-2-a^{-1})$.
Since $\dim_{\mathcal H}\mathcal{S}< \sigma$ and $\mathcal{S}$ is compact, there exists
$\ell_0$ such that, for every integer $\ell\ge \ell_0$, one can find a
finite cover $\{B^{g_0}_{r_{\ell,j}}(x_{\ell,j})\}_{j=1}^{N_\ell}$ of $\mathcal S$ by $g_0$-geodesic balls,
with $x_{\ell,j}\in \mathcal S$, satisfying
\begin{equation}\label{cover-choice}
        r_{\ell,j}\le 2^{-\ell},
        \qquad\quad
        \sum_{j=1}^{N_\ell} r_{\ell,j}^{\sigma}\le 1 .
\end{equation}
After reindexing $\ell$, we may assume this holds for all integers $\ell\ge 1$.
Define the atomic measure
\begin{equation}
\nu_{\mathcal S}\coloneqq
\sum_{\ell=1}^{\infty}\sum_{j=1}^{N_\ell}
r_{\ell,j}^{\,n-2-a^{-1}}\delta_{x_{\ell,j}} .
\end{equation}
Set $p:=n-2-a^{-1}$, and observe that since $p-\sigma>0$ it follows from \eqref{cover-choice} that for any measurable set $X\subset U_2$ we have
\begin{align}
\begin{split}
\nu_S(X)
\leq
\sum_{\ell=1}^{\infty}\sum_{j=1}^{N_\ell}
r_{\ell,j}^{p} &=
\sum_{\ell=1}^{\infty}\sum_{j=1}^{N_\ell}
r_{\ell,j}^{p-\sigma}r_{\ell,j}^{\sigma}  \\
&\leq
\sum_{\ell=1}^{\infty}
2^{-\ell(p-\sigma)}
\sum_{j=1}^{N_\ell}r_{\ell,j}^{\sigma}  
\leq
\sum_{\ell=1}^{\infty}2^{-\ell(p-\sigma)}
<\infty .
\end{split}
\end{align}
Thus $\nu_{\mathcal S}$ is a finite measure. Moreover, since each atom is centered at a point
$x_{\ell,j}\in \mathcal S$ and $\mathcal S$ is closed, $\nu_{\mathcal S}$ is supported on $\mathcal S$.

Next set
\begin{equation}
        G_{\mathcal{S}}(y):=\int_{\mathcal{S}} G(x,y)\,d\nu_{\mathcal S}(x)
        =
        \sum_{\ell=1}^{\infty}\sum_{j=1}^{N_\ell}
        r_{\ell,j}^{\,n-2-a^{-1}}G(x_{\ell,j},y).
\end{equation}
Since $\nu_{\mathcal S}$ is finite and the Green's functions are locally uniformly
bounded for $y$ in compact subsets of $U_2\setminus \mathcal{S}$ by Lemma \ref{exist-Green}, this series
converges locally uniformly on $U_2\setminus \mathcal S$. Hence $G_{\mathcal S}$ is a
positive harmonic function on $U_2\setminus \mathcal S$. Furthermore, by interior elliptic
regularity, $G_{\mathcal S}$ is smooth on $U_2\setminus \mathcal S$.

We note that $G_{\mathcal S}$ has pole set $\mathcal S$. Indeed, fix
$x\in \mathcal S$. For each $\ell\geq 1$, choose $j_\ell$ such that
$x\in B^{g_0}_{r_{\ell,j_\ell}}(x_{\ell,j_\ell})$,
and set $s_\ell:=r_{\ell,j_\ell}$. Then $s_\ell\leq 2^{-\ell}$, and hence
$s_\ell\to0$. If $y\in U_2\setminus \mathcal S$ satisfies
$d_{g_0}(y,x)<s_\ell$, then
\begin{equation}
        d_{g_0}(y,x_{\ell,j_\ell})
        \leq d_{g_0}(y,x)+d_{g_0}(x,x_{\ell,j_\ell})
        <2s_\ell .
\end{equation}
Using the lower Green's function estimate \eqref{f0a-9fjh-9qjhh} and the definition of $\nu_{\mathcal S}$, we obtain
\begin{align}
\begin{split}
        G_{\mathcal S}(y)
        \geq
        s_\ell^{\,n-2-a^{-1}}G(x_{\ell,j_\ell},y)  
        &\geq
        C^{-1}s_\ell^{\,n-2-a^{-1}}
        d_{g_0}(x_{\ell,j_\ell},y)^{2-n}  \\
        &\geq
        C^{-1}2^{2-n}
        s_\ell^{\,n-2-a^{-1}}s_\ell^{\,2-n}  
        =
        c\,s_\ell^{-a^{-1}} .
\end{split}
\end{align}
It follows that
\begin{equation}
        G_{\mathcal S}(y)\to+\infty
        \qquad\text{as } y\to x,\quad y\in U_2\setminus\mathcal S .
\end{equation}
For the completeness of the conformally blown-up metric, however, this
pointwise blow-up is not sufficient by itself. What is needed is the stronger
integral statement that $G_{\mathcal S}^a$ is non-integrable along every curve
approaching $\mathcal S$. The Wolff-type potential introduced below encodes
the scale-by-scale lower bounds for the Green's potential and will be used to
prove precisely this nonintegrability.

%We next show that $G_{\mathcal S}$ blows up sufficiently rapidly near $\mathcal S$. 
Consider the Wolff-type potential on $U_2$,
\begin{equation}
        W(x):=\int_0^1\left(\nu_{\mathcal{S}}(B^{g_0}_r(x))r^{2-n}\right)^a\,dr .
\end{equation}
Fix $x\in \mathcal S$, and as above for each $\ell\ge1$ choose $j_\ell$ such that
$x\in B^{g_0}_{r_{\ell,j_\ell}}(x_{\ell,j_\ell})$.
%Set $s_\ell:=r_{\ell,j_\ell}$. Then $s_\ell\le 2^{-\ell}$, and hence
%$s_\ell\to0$. 
Note that for every $r\in [s_\ell,2s_\ell]$, one has
$x_{\ell,j_\ell}\in B^{g_0}_r(x)$. Therefore
\begin{equation}
        \nu_{\mathcal S}(B^{g_0}_r(x))\ge s_\ell^{\,n-2-a^{-1}} .
\end{equation}
It follows that, for $r\in [s_\ell,2s_\ell]$,
\begin{equation}
\left(\nu_{\mathcal S}(B^{g_0}_r(x))r^{2-n}\right)^a
\ge
c\,\left(s_\ell^{\,n-2-a^{-1}}s_\ell^{\,2-n}\right)^a
=
c\,s_\ell^{-1}.
\end{equation}
Since $s_\ell\to0$, we may choose a subsequence $\ell_k$ such that $2s_{\ell_{k+1}}<s_{\ell_k}$,
then the intervals $[s_{\ell_k},2s_{\ell_k}]$ are pairwise disjoint. Consequently,
\begin{equation}\label{blow-up-wolff}
W(x)
\ge
\sum_{k=1}^\infty
\int_{s_{\ell_k}}^{2s_{\ell_k}}
\left(\nu_{\mathcal S}(B^{g_0}_r(x))r^{2-n}\right)^a\,dr  
\ge
c\sum_{k=1}^\infty
\int_{s_{\ell_k}}^{2s_{\ell_k}}s_{\ell_k}^{-1}\,dr
= +\infty .
\end{equation}
Thus $W$ %$W(x)=+\infty$ 
takes the value $+\infty$ at all points of $\mathcal S$.

\medskip
\noindent\emph{Step 2: Construction of a complete metric on $M^n\setminus \mathcal S$.} 
Let $\varphi$ be a smooth function on $M^n$ satisfying
\begin{equation}
        0\leq \varphi\leq 1,\qquad
        \varphi=1 \text{ on } U_1,\qquad
        \overline{\operatorname{supp}\varphi}\subset U_2 .
\end{equation}
For a constant $\tau>0$, to be chosen later, define
\begin{equation}
        w:=1+\tau\varphi G_{\mathcal S},
        \qquad
        g':=w^{2a}g ,
\end{equation}
where $\varphi G_{\mathcal S}$ is extended by zero outside $U_2$.

We now prove completeness near $\mathcal S$. Let
$\gamma:[0,L)\to U_1\setminus \mathcal S$ be a $g$-unit-speed curve such
that $\lim_{t\to L}\gamma(t)=x\in \mathcal S$.
Since $g$ and $g_0$ are uniformly equivalent on $\overline{U}_2$, there
is a constant $C_0>0$ such that, for $t\in[L-\eta,L)$ with $\eta>0$ sufficiently small, we have
$d_{g_0}(\gamma(t),x)\leq C_0(L-t)$.
Set $r_t:=2C_0(L-t)$. If $z\in B_{r_t}^{g_0}(x)$, then
\begin{equation}
        d_{g_0}(z,\gamma(t))
        \leq d_{g_0}(z,x)+d_{g_0}(x,\gamma(t))
        \leq 2r_t .
\end{equation}
Using the lower Green's function estimate \eqref{f0a-9fjh-9qjhh}, and the fact that $\nu_{\mathcal S}$ is supported on $\mathcal{S}$, we obtain
\begin{align}
\begin{split}
        G_{\mathcal S}(\gamma(t))
        &=\int_{\mathcal S}G(z,\gamma(t))\,d\nu_{\mathcal S}(z) \\
        &\geq
        \int_{B_{r_t}^{g_0}(x)}\!\!
        G(z,\gamma(t))\,d\nu_{\mathcal S}(z) 
        \geq
        c\,\nu_{\mathcal S}(B_{r_t}^{g_0}(x))\, r_t^{2-n}.
\end{split}
\end{align}
Therefore, as in \cite[Lemma 2.20]{BiHaoHeShiZhu26},
\begin{align}
\begin{split}
        \int_0^L G_{\mathcal S}(\gamma(t))^a\,dt
        &\geq
        c\int^{L}_{L-\eta}
        \left(\nu_{\mathcal S}(B_{r_t}^{g_0}(x))r_t^{2-n}\right)^a\,dt \\
        &=
        c(2C_0)^{-1}\int_0^{2C_0\eta}
        \left(\nu_{\mathcal S}(B_r^{g_0}(x))r^{2-n}\right)^a\,dr
        =
        +\infty,
\end{split}
\end{align}
where the last equality follows from $W(x)=+\infty$.

Since $\varphi=1$ on $U_1$, we have $w\geq \tau G_{\mathcal S}$ along
$\gamma$ for $t$ sufficiently close to $L$. Hence
\[
        L_{g'}(\gamma)
        =
        \int_0^L w(\gamma(t))^a\,dt
        \geq
        \tau^a\int_{L-\eta}^L G_{\mathcal S}(\gamma(t))^a\,dt
        =
        +\infty .
\]
Thus every curve approaching $\mathcal S$ has infinite $g'$-length.
It follows that $g'$ is complete on
$M^n\setminus\mathcal S$.

\medskip
\noindent\emph{Step 3: The positivity of the weighted scalar curvature.} Fix $\varepsilon>0$, to be chosen sufficiently small, and set
\begin{equation}
        \mu:=\frac{n-1}{n}+\varepsilon,
        \qquad
        f:=b\log w,
\end{equation}
where $b\in\mathbb R$ will be chosen below. 
Since $g'=w^{2a}g$, the conformal transformation formulas give
\begin{align}
\begin{split}
        &w^{2a}R(g')
        =
        R(g)-2a(n-1)\Delta_g\log w
        -(n-1)(n-2)a^2|\nabla\log w|_g^2, \\
        &w^{2a}\Delta_{g'}\log w
        =
        \Delta_g\log w
        +a(n-2)|\nabla\log w|_g^2, \\
        &w^{2a}|\nabla\log w|_{g'}^2
        =
        |\nabla\log w|_g^2 .
\end{split}
\end{align}
Using $f=b\log w$, we obtain
\begin{align}
\begin{split}
    w^{2a}R_{\mu, f}(g')
    &=
    R(g)+2\bigl(b-a(n-1)\bigr)\Delta_g\log w          \\
    &\quad
    -
    \left(
        -2ab(n-2)
        +(n-1)(n-2)a^2
        +\mu b^2
    \right)
    |\nabla\log w|_g^2 .
\end{split}
\end{align}
Since
$
        \Delta_g\log w
        =
        \frac{\Delta_g w}{w}
        -
        |\nabla\log w|_g^2
$,
this becomes
\begin{equation}\label{Q-computation-2'}
%\begin{split}
    w^{2a}R_{\mu, f}(g')
    =
    R(g)
    +2\bigl(b-a(n-1)\bigr)\frac{\Delta_g w}{w}       
    +
    %\Bigl[
    %    -\bigl(2a(n-2)-2\bigr)b
    %    -\bigl((n-1)(n-2)a^2-2a(n-1)\bigr)
    %    -\mu b^2
    %\Bigr]
    c_*|\nabla\log w|_g^2 ,
%\end{split}
\end{equation}
where
\begin{equation}\label{b-equation'}
  c_*=  \bigl(2a(n-2)-2\bigr)b
    -\bigl((n-1)(n-2)a^2-2a(n-1)\bigr)
    -\mu b^2.
\end{equation}
We choose $b$ so that the coefficient of the gradient term in
\eqref{Q-computation-2'} vanishes, namely $c_*=0$.
Because $a>\frac{2}{n-2}$
this quadratic equation has a real root provided that
\begin{equation}\label{discriminant-condition'}
    \mu
    <
    \frac{(2a(n-2)-2)^2}
    {4\bigl((n-1)(n-2)a^2-2a(n-1)\bigr)} .
\end{equation}
Moreover, the choice $a<\frac{n}{n-2}$ implies
\begin{equation}
    \frac{(2a(n-2)-2)^2}
    {4\bigl((n-1)(n-2)a^2-2a(n-1)\bigr)}
    >
    \frac{n-1}{n}.
\end{equation}
Hence there exists $\varepsilon_n>0$ such that
\eqref{discriminant-condition'} holds whenever
\begin{equation}
        0<\varepsilon<\varepsilon_n,
        \qquad
        \mu=\frac{n-1}{n}+\varepsilon .
\end{equation}
We remark that 
\begin{enumerate}\label{coff-discc}
    \item For each such $\varepsilon$, choose a real root
$b=b(\varepsilon)$ of \eqref{b-equation'}. The branch may be chosen so that
$b(\varepsilon)$ remains uniformly bounded for
$\varepsilon\in(0,\varepsilon_n)$.
\item When $c_*=0$,  \eqref{b-equation'} implies that 
$b-a(n-1)<0$. 
\end{enumerate}
With this choice of $b$, the gradient term in \eqref{Q-computation-2'}
vanishes, and it follows that
\begin{equation}
    w^{2a}R_{\mu, f}(g')
    =
    R(g)
    +2\bigl(b-a(n-1)\bigr)
    \frac{\tau\Delta_g(\varphi G_{\mathcal S})}{w}.
\end{equation}
Because $w\geq1$, and because $b=b(\varepsilon)$ is uniformly bounded
for $\varepsilon\in(0,\varepsilon_n)$, there exists a constant
$C_1>0$, independent of $\varepsilon$, such that
\begin{equation}\label{Q-lower'}
    w^{2a}R_{\mu,f}(g')
    \geq
    R(g)
    -
    C_1\tau|\Delta_g(\varphi G_{\mathcal S})| .
\end{equation}
Observe that $\Delta_g(\varphi G_{\mathcal S})=0$ on
$U_1\setminus\mathcal S$, since $\varphi=1$ there and
$G_{\mathcal S}$ is $g$-harmonic. It also vanishes on
$M^n\setminus U_2$, since $\varphi G_{\mathcal S}$ is supported in
$U_2$. On the compact set
$\overline{U_2\setminus U_1}$, define
\begin{equation}
    C_2:=
    \sup_{\overline{U_2\setminus U_1}}
    |\Delta_g(\varphi G_{\mathcal S})|<\infty,
    \qquad
    C_3:=
    \inf_{\overline{U_2\setminus U_1}}R(g)>0 .
\end{equation}
Choose $\tau>0$ sufficiently small so that
$C_1C_2\tau<C_3$.
Then \eqref{Q-lower'} gives
\begin{equation}
        w^{2a}R_{\mu, f}(g')>0
        \qquad\text{on } U_2\setminus U_1 .
\end{equation}
On $U_1\setminus\mathcal S$ and on $M^n\setminus U_2$, we have
$\Delta_g(\varphi G_{\mathcal S})=0$, and hence on those sets $w^{2a}R_{\mu, f}(g')=R(g)>0$.
Therefore, $R_{\mu, f}(g')>0$ on $M^n\setminus\mathcal S$.
\end{proof}

\subsection{Blow-up of Lipchitz embedded simplical subcomplexes} Suppose that the singular set $\mathcal S$ is a Lipschitz embedded
simplicial subcomplex. In this case, the Hausdorff measures on the
simplices of $\mathcal S$ provide a natural alternative to the atomic
measure $\nu_{\mathcal S}$. Following the construction in
\cite{WangXie24}, we use these measures to construct a Green-type function
with pole along $\mathcal S$, and hence a complete conformal metric on
$M\setminus\mathcal S$ with controlled weighted scalar curvature.

Throughout this subsection, $(M^n,g)$ is a closed manifold equipped with
an $L^\infty$-metric, smooth away from a singular set $\mathcal S$,
and $\mathcal S$ is a finite Lipschitz embedded simplicial subcomplex of
dimension at most $n-3$.  We state two
estimates that will be used to control the weighted scalar curvature (Lemma \ref{pt-poission-eq} and Lemma \ref{decay-integ-estimate}). Their
proofs are deferred to the Appendix.

\begin{lemma}\label{pt-poission-eq}
Under the hypotheses of Lemma \ref{exist-Green}, let $p\in\mathcal S$
and $\alpha\in(2,n)$. There exists a function
$
\psi_p\in C^\infty(U_2\setminus\mathcal S)
$
satisfying
\begin{equation}\label{poisson}
-\Delta_g\psi_p=d_g(\,\cdot\,,p)^{-\alpha}
\text{ in }U_2\setminus\mathcal S,\qquad 
\psi_p=0 \text{ on }\partial U_2.
\end{equation}
Moreover, there is a constant $C\geq1$, independent of $p$, such that for every $x\in U_1\setminus\mathcal S$
\begin{equation}\label{two-side-estimate}
C^{-1}d_g(x,p)^{2-\alpha}
\leq
\psi_p(x)
\leq
C\,d_g(x,p)^{2-\alpha}. 
\end{equation}
\end{lemma}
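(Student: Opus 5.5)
We would construct $\psi_p$ as the Green potential of the right-hand side, namely
\[
\psi_p(x):=\int_{U_2}G(x,y)\,d_g(y,p)^{-\alpha}\,dV_g(y),
\]
with $G$ the Dirichlet Green's function of $-\Delta_g$ on $U_2$ from Lemma~\ref{exist-Green}. Note that $G$ there is defined for all pairs, and the estimate \eqref{f0a-9fjh-9qjhh} holds with a uniform constant for uniformly elliptic operators. The bound $C^{-1}d_{g_0}(x,y)^{2-n}\le G(x,y)\le C d_{g_0}(x,y)^{2-n}$ holds for $x,y$ in a compact subset of $U_2$, by the Littman--Stampacchia--Weinberger/Grüter--Widman estimates. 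Away from $\partial U_2$ only an upper bound is needed, and near $\partial U_2$ we have $G\to0$. Since $\alpha<n$, the function $d_g(\cdot,p)^{-\alpha}$ is in $L^1(U_2)$, and indeed in $L^q$ for some $q>1$ if $\alpha<n$. So $\psi_p$ is a well-defined weak solution in $W^{1,1}_0$, vanishing on $\partial U_2$ in the trace sense.

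\emph{Regularity.} On $U_2\setminus\mathcal S$ the metric is smooth. Moreover, $p\in\mathcal S$, so the right-hand side $d_g(\cdot,p)^{-\alpha}$ is bounded near any regular point. We need it smooth there. Here $d_g$ is the distance of the $L^\infty$-metric, which is only Lipschitz, so we would replace it by $d_{g_0}(\cdot,p)$ smoothed away from $p$ (comparable by uniform equivalence). Alternatively, we would note that the estimates below are insensitive to replacing the right-hand side by a smooth comparable function. Local $W^{2,q}$ and Schauder theory then give $\psi_p\in C^\infty(U_2\setminus\mathcal S)$, provided the right-hand side is smooth there. This is the one delicate point in interpreting the statement; we would take the smoothed version of $d(\cdot,p)^{-\alpha}$ comparable to the stated one.

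\emph{Two-sided estimate.} Fix $x\in U_1\setminus\mathcal S$ and set $r=d_{g_0}(x,p)$; by uniform equivalence $d_g\simeq d_{g_0}$ up to $\Lambda$. For the upper bound, split $U_2$ into three regions:
\begin{itemize}
\item $B_{r/2}(p)$: here $G\lesssim r^{2-n}$, so the contribution is $\lesssim r^{2-n}\int_{B_{r/2}(p)}d^{-\alpha}\lesssim r^{2-n}r^{n-\alpha}=r^{2-\alpha}$, using $\alpha<n$.
\item $B_{r/2}(x)$: here $d(y,p)^{-\alpha}\simeq r^{-\alpha}$ and $\int_{B_{r/2}(x)}d(x,y)^{2-n}\lesssim r^2$, giving $r^{2-\alpha}$.
\item The remainder: here $d(x,y)\gtrsim d(y,p)$ when $d(y,p)\ge 2r$, and both are $\simeq r$ otherwise, so the contribution is $\lesssim\int_{d(y,p)\ge r/2}d(y,p)^{2-n-\alpha}\lesssim r^{2-\alpha}$, using $\alpha>2$.
\end{itemize}
For the lower bound, restrict the integral to $B_{r/2}(x)\cap U_1$. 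This region has volume $\simeq r^n$ when $r$ is small, and there $G\gtrsim r^{2-n}$ and $d(y,p)^{-\alpha}\gtrsim r^{-\alpha}$, which gives $\gtrsim r^{2-\alpha}$. For $r$ bounded below we would use positivity and continuity on the compact set $\overline U_1$ away from $\partial U_2$, for instance via the Harnack inequality. The constants depend only on $\Lambda$, $n$, $\alpha$ and the Green's function constant, hence not on $p$.

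The main obstacle is ensuring that the Green's function bounds hold uniformly near $\mathcal S$ for all pairs $(x,y)$, and not only for poles on $\mathcal S$ as stated in Lemma~\ref{exist-Green}. We would invoke the general Grüter--Widman bounds, which hold for all pairs in the interior of $U_2$ with constants depending only on ellipticity, to justify this.
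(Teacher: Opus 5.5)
Your proposal is correct and is essentially the paper's proof: the same Green potential $\psi_p(x)=\int_{U_2}G(x,y)\,d_g(y,p)^{-\alpha}\,dV_g(y)$, the same three-region split into $B(p,r/2)$, $B(x,r/2)$ and the remainder for the upper bound (using $\alpha<n$ and $\alpha>2$), and a one-ball restriction for the lower bound. The only difference is the ball: the paper uses $B(p,r/4)$, on which $d_g(x,y)\asymp r$, while you use $B_{r/2}(x)$. There is no need to intersect it with $U_1$, since the interior lower Green bound already holds on $B_{r/2}(x)$ once $r$ is small compared with $\operatorname{dist}(U_1,\partial U_2)$.

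Your extra remarks address points the paper's proof passes over silently, and your fixes for them are sound. These are that the Green bounds are needed for general pairs, including the large-$r$ regime, rather than only for poles on $\mathcal S$, and that $d_g(\cdot,p)^{-\alpha}$ is merely Lipschitz, so the $C^\infty$ claim really requires a smooth comparable right-hand side.
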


We shall also use the following elementary integration estimate.

\begin{lemma}\label{decay-integ-estimate}
Let $\sigma$ be a compact Lipschitz embedded $k$-simplex, and suppose
that $f$ is a positive continuous function satisfying
\[
C_0^{-1}d_g(x,y)^{-\beta}
\leq
f(x,y)
\leq
C_0d_g(x,y)^{-\beta}.
\]
If $\beta>k$, then there exists $C\geq1$ such that
\begin{equation}\label{eq:simplex-integral-estimate}
C^{-1}d_g(x,\sigma)^{k-\beta}
\leq
\int_\sigma f(x,y)\,d\mathcal H^k(y)
\leq
C\,d_g(x,\sigma)^{k-\beta}.
\end{equation}
\end{lemma}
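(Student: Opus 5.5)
The plan is to reduce to a Euclidean model computation by working in bi-Lipschitz coordinates adapted to $\sigma$, and to prove the two inequalities by splitting $\sigma$ into dyadic shells around a nearest point. Since $f$ is comparable to $d_g(x,y)^{-\beta}$, it suffices to bound
\[
I(x):=\int_\sigma d_g(x,y)^{-\beta}\,d\mathcal H^k(y).
\]
Since $g$ is uniformly equivalent to $g_0$ (in the setting where the lemma is used), and $\sigma$ is the image of a standard $k$-simplex $\Delta^k$ under a bi-Lipschitz map $\iota$, we have $d_g(\iota(s),\iota(t))\simeq |s-t|$ and $\mathcal H^k\llcorner\sigma\simeq \iota_*\mathcal L^k$, with comparability constants depending only on $\Lambda$ and the Lipschitz constants. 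Throughout we work with $x$ in a bounded region, say $d_g(x,\sigma)\leq \operatorname{diam}(M)$, so that $\delta:=d_g(x,\sigma)$ is bounded above.

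\emph{Upper bound.} Let $y_0\in\sigma$ realize $\delta$. By the triangle inequality, every $y\in\sigma$ satisfies $d_g(x,y)\geq \delta$ and $d_g(x,y)\geq \tfrac12\big(\delta+d_g(y,y_0)\big)$. Split $\sigma$ into the pieces $A_0=\{d_g(y,y_0)<\delta\}$ and $A_j=\{2^{j-1}\delta\leq d_g(y,y_0)<2^j\delta\}$ for $j\ge1$. The Lipschitz parametrization gives the Ahlfors-regularity bound $\mathcal H^k(\sigma\cap B_r(y_0))\leq Cr^k$. Consequently
\[
I(x)\leq C\sum_{j\geq0}(2^j\delta)^k(2^{j}\delta)^{-\beta}=C\delta^{k-\beta}\sum_j 2^{j(k-\beta)},
\]
and the geometric series converges precisely because $\beta>k$.

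\emph{Lower bound.} Here we need the lower Ahlfors bound $\mathcal H^k(\sigma\cap B_r(y_0))\geq c\,r^k$ for $0<r\leq r_0$. This holds because a Euclidean simplex is a Lipschitz domain: any ball centered at a point of $\Delta^k$, including points on faces or vertices, meets $\Delta^k$ in a set of measure $\gtrsim r^k$, with the constant depending only on the interior angles. On $\sigma\cap B_\delta(y_0)$ we have $d_g(x,y)\leq 2\delta$. When $\delta\leq r_0$ this yields $I(x)\geq c\,\delta^k(2\delta)^{-\beta}=c'\delta^{k-\beta}$. When $\delta>r_0$, we are in a compact range of $\delta$, and there $I(x)$ and $\delta^{k-\beta}$ are both bounded above and below by positive constants, so the estimate holds after adjusting $C$.

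The main obstacle is the uniform two-sided Ahlfors regularity of $\mathcal H^k\llcorner\sigma$ near the lower-dimensional faces, uniform in the center $y_0\in\sigma$. It reduces, via the bi-Lipschitz chart, to the Euclidean fact about simplices stated above, together with the transfer of $g$-balls to comparable Euclidean balls. Continuity of $f$ is used only to ensure the integral is well defined.
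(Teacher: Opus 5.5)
Your proposal is correct and follows essentially the paper's argument: take a nearest point $y_0\in\sigma$, use two-sided $k$-Ahlfors regularity of $\sigma$ on a ball of radius comparable to $d_g(x,\sigma)$ to get the lower bound and the local part of the upper bound, and sum over dyadic annuli centered at $y_0$, where the geometric series converges because $\beta>k$. Your compactness treatment of the regime $d_g(x,\sigma)>r_0$ makes explicit what the paper leaves implicit when it says ``at small scales''; one harmless slip is that the triangle inequality only gives $d_g(x,y)\geq\tfrac13\bigl(\delta+d_g(y,y_0)\bigr)$ (the constant $\tfrac12$ fails, e.g.\ for a semicircular arc centered at $x$), but you only need $d_g(x,y)\gtrsim 2^j\delta$ on $A_j$, so the argument is unaffected.
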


We now construct the required conformal factor $u$. Consider
 the singular set $\mathcal{S}$ is the union of finite simplices $\{\sigma_j\}$ with $\dim (\sigma_j)\leq n-3$. Fix $\eta\in (0,1)$, to be chosen sufficiently small, and   let $\ell_j=\dim(\sigma_j)$ and $\alpha_j=\ell_j+(3-\eta)$. Since $\ell_j\leq n-3$, we have 
 \begin{equation}
     2<\alpha_j<n. 
 \end{equation}For $p\in \sigma_j$, let $\psi_{j, p}$ be the solution of 
 $-\Delta_g\psi_{j,p}=d_g(\,\cdot\,,p)^{-\alpha_j}$ given by Lemma \ref{pt-poission-eq}. Define
\begin{equation}\label{def-blow-up}
\begin{split}
    \psi_{\mathcal{S}}(x)&=\sum_j\int_{\sigma_j}\psi_p(x)d\mathcal{H}^{\dim\sigma_j}(p) \\
    \phi_{\mathcal{S}}(x)&=\sum_j\int_{\sigma_j} d^{-\alpha_j}(x, p)d\mathcal{H}^{\dim\sigma_j}(p)
    \end{split}
\end{equation} 
It follows that
\begin{equation}\label{integ-poission}
-\Delta_g\psi_{\mathcal S}=\phi_{\mathcal S}
\qquad\text{on }U_2\setminus\mathcal S.
\end{equation}
Because
$
\ell_j-\alpha_j=\eta-3<0
$ and $
\ell_j+2-\alpha_j=\eta-1<0
$, 
Lemma \ref{decay-integ-estimate} gives
for $x\in U_1\setminus\mathcal{S}$
\begin{equation}\label{bound-prescribed-solution}
\phi_{\mathcal S}(x)
\asymp d_g(x,\mathcal S)^{\eta-3}
\qquad 
\psi_{\mathcal S}(x)
\asymp
C\,d_g(x,\mathcal S)^{\eta-1}
\end{equation}
Combining \eqref{def-blow-up},
\eqref{integ-poission} and \eqref{bound-prescribed-solution}, we obtain
\begin{equation}\label{Laplace-estimate}
-\psi_{\mathcal S}^{-1}\Delta_g\psi_{\mathcal S}
=
\psi_{\mathcal S}^{-1}\phi_{\mathcal S}
\geq
C^{-1}d_g(x,\mathcal S)^{-2}.
\end{equation}

\begin{proof}[Proof of Proposition \ref{simplical-blow-up}]
Choose $\eta>0$ sufficiently small that
$
\frac{1}{1-\eta}<\frac{n}{n-2},
$
and then fix $a$ satisfying
\begin{equation}\label{eq:choice-a-eta}
\frac{1}{1-\eta}<a<\frac{n}{n-2}.
\end{equation}
Choose $\varepsilon_0>0$ sufficiently small and set
$
\mu:=\frac{n-1}{n}+\varepsilon_0.
$
Let $b=b(\varepsilon_0)$ be the solution of \eqref{b-equation'} chosen
so that the gradient term in the conformal transformation formula
vanishes. From Remark \ref{coff-discc}, we have that 
\[
b-a(n-1)<0.
\]

Let $\chi\in C_c^\infty(U_2)$ be a cutoff function satisfying
$
0\leq\chi\leq1 $ and $
\chi\equiv1 $ on $U_1 $.
For $\tau>0$, define
\begin{equation}\label{eq:conformal-blow-up}
w:=1+\tau\chi\psi_{\mathcal S},
\qquad
g':=w^{2a}g,
\qquad
\rho:=b\log w.
\end{equation}

We first show that $g'$ is complete. It suffices to consider a
$g$-unit-speed curve
\[
\gamma\colon[0,L)\longrightarrow U_1\setminus\mathcal S
\]
such that $\gamma(t)\to\mathcal S$ as $t\to L$. Since
$
d_g(\gamma(t),\mathcal S)\leq L-t,
$
 and  \eqref{bound-prescribed-solution} gives
\[
w(\gamma(t))
\geq
C(L-t)^{-(1-\eta)}.
\]
It follows that
\[
L_{g'}(\gamma)
=
\int_0^Lw(\gamma(t))^a\,dt
\geq
C\int_0^L(L-t)^{-a(1-\eta)}\,dt
=\infty,
\]
where we have used $a(1-\eta)>1$. Thus $g'$ is complete.

We next estimate its weighted scalar curvature. The conformal
transformation formula, together with the choice of $a$ and $b$,
yields
\begin{equation}\label{eq:weighted-conformal-blow-up}
w^{2a}R_{\mu,\rho}(g')
=
R(g)
-
2\bigl(b-a(n-1)\bigr)\tau
\frac{-\Delta_g(\chi\psi_{\mathcal S})}
     {1+\tau\chi\psi_{\mathcal S}}.
\end{equation}
On $U_1$, where $\chi\equiv1$, 
using \eqref{bound-prescribed-solution} and \eqref{Laplace-estimate}, we obtain
$
w^{2a}R_{\mu,\rho}(g')
\geq
C\,d_g(x,\mathcal S)^{-2}
$ on $U_1\setminus\mathcal S
$.
Therefore,
\begin{equation}\label{eq:weighted-lower-near-S}
R_{\mu,\rho}(g')
\geq
Cw^{-2a}d_g(x,\mathcal S)^{-2}
\qquad\text{on }U_1\setminus\mathcal S.
\end{equation}
On the compact set $\overline{M\setminus U_1}$, define
$
C_1:=
\sup_{\overline{M\setminus U_1}}
\left|\Delta_g(\chi\psi_{\mathcal S})\right|<\infty.
$
Given $\varepsilon>0$, choose $\tau>0$ sufficiently small that
\[
2\tau\bigl(b+a(n-1)\bigr)C_1\leq\varepsilon.
\]
Using $w\geq1$ and   \eqref{eq:weighted-conformal-blow-up} gives
\begin{equation}\label{eq:weighted-lower-away-S}
    w^{2a}R_{\mu,\rho}(g')
\geq
R(g)-\varepsilon
\qquad\text{on }M\setminus U_1.
\end{equation}

Finally, set
\[
u_\varepsilon:=w,\qquad
g'_\varepsilon:=u_\varepsilon^{2a}g,\qquad
\rho_\varepsilon:=b\log u_\varepsilon.
\]
The function $u_\varepsilon$ is identically $1$ outside
$\operatorname{supp}\chi$ and tends to $+\infty$ along
$\mathcal S$. The conclusion now follows from
\eqref{eq:weighted-lower-near-S} and
\eqref{eq:weighted-lower-away-S}.
\end{proof}

\section{Proofs of the extremality results for scalar curvature}
In this section, we establish scalar-curvature extremality results for $L^\infty$-manifolds. Theorem \ref{B} proves scalar-curvature extremality for $L^\infty$ metrics on tori, allowing general singular sets at the cost of an additional assumption on the fundamental group of the singular set. When the singular set is a simplicial complex, this additional assumption can be removed; see Theorem \ref{thm:LinfLlarull} and Proposition \ref{ric-vanish-torus}.

\subsection{General singular sets} We combine the conformal change  with Theorem \ref{thm:torus} to establish the following scalar curvature extremality for $L^\infty$-metrics on tori. 
 
\begin{theorem}\label{B}Let $(\mathbb{T}^n, g)$ be a  $L^\infty$-manifold. Suppose the singular set $\mathcal{S}$ satisfies the Hausdorff dimension upper bound
\begin{equation*}
\dim_{\mathcal{H}}\mathcal{S}\leq n-3+\frac{2}{n}
\end{equation*} and the induced map $\pi_1(U)\rightarrow \pi_1(\tor^n)$ is not  surjective for some neighborhood $U$ of $\mathcal{S}$. If the scalar curvature $R(g)$ is non-negative on $\tor^n\setminus \mathcal{S}$, then $g$ is Ricci-flat on $M\setminus\mathcal{S}$.  
Moreover, if in addition $\mathcal{S}$ satisfies the Minkowski dimension upper bound
\begin{equation}
    \dim_{\mathcal{M}}\mathcal{S}\leq n-3+\frac{1}{n-1},
\end{equation} 
then there exist a flat metric $g_0$ on $\tor^n$ and a homeomorphism
\[
    \Phi:\tor^n\longrightarrow\tor^n
\]
such that the restriction
\[
    \Phi|_{\tor^n\setminus\mathcal S}:
    (\tor^n\setminus\mathcal S,g)
    \longrightarrow
    (\tor^n\setminus\Phi(\mathcal S),g_0)
\]
is a smooth Riemannian isometry. Furthermore, the metric completion of the length space
$(\tor^n\setminus\mathcal S,d_g)$ is isometric to the flat torus $(\tor^n,d_{g_0})$.
\end{theorem}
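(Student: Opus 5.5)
The first assertion (Ricci-flatness) comes from combining Theorem~\ref{dichotomy}, Proposition~\ref{prop:weightedSc} and Theorem~\ref{thm:torus}. We assume $g$ is not Ricci-flat on $\tor^n\setminus\mathcal S$ and derive a contradiction.

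Since $\dim_{\mathcal H}\mathcal S\le n-3+\frac2n<n-2$, the Hausdorff version of Theorem~\ref{dichotomy} (see the remark after Proposition~\ref{int-by-part}) gives a second alternative. There is an $L^\infty$-metric $g_1$, smooth on $\tor^n\setminus\mathcal S$, with $R(g_1)\ge c>0$ there. Its singular set lies in $\mathcal S$, so it still obeys the same Hausdorff bound. Proposition~\ref{prop:weightedSc} applied to $(\tor^n,g_1)$ then produces the following data on $\tor^n\setminus\mathcal S$:
\begin{itemize}
\item a complete metric $g'$;
\item a constant $\mu>\frac{n-1}{n}$;
\item a function $f$ with $R_{\mu,f}(g')>0$.
\end{itemize}
By hypothesis, $\pi_1(U)\to\pi_1(\tor^n)$ is not surjective for some neighborhood $U$ of $\mathcal S$. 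This is exactly the situation excluded by Theorem~\ref{thm:torus}, which is the desired contradiction.

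One point needs checking. Theorem~\ref{thm:torus} is stated for $f\in C^\infty(M)$, while here $f=b\log w$ is only smooth on the complement of $\mathcal S$ and is unbounded near it. I will observe that the proof of Lemma~\ref{lemma:Lichnerowicz} only uses $f$ on $\tor^n\setminus\mathcal S$, via integration by parts against compactly supported spinors. So $f\in C^\infty(\tor^n\setminus\mathcal S)$ suffices, and the theorem applies verbatim.

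The second assertion is removability under the Minkowski bound $\dim_{\mathcal M}\mathcal S\le n-3+\frac1{n-1}$. Here I will follow the harmonic-map strategy outlined in the introduction. For each generator $\theta_i\in H^1(\tor^n;\mathbb Z)$, minimize the Dirichlet energy of maps $u_i:\tor^n\to\sph^1$ in the class $\theta_i$, using the $L^\infty$-metric $g$. Equivalently, solve $\Delta_g h_i=0$ weakly for the multivalued lift $h_i=x_i+v_i$ with $v_i\in W^{1,2}$. This is a uniformly elliptic problem with bounded measurable coefficients. De Giorgi--Nash--Moser then makes $v_i$ Hölder continuous on $\tor^n$, and $v_i$ is smooth on the regular set.

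Next I will apply the Bochner formula with $\mathrm{Ric}=0$, namely $\tfrac12\Delta|dh_i|^2=|\nabla^2h_i|^2$, together with the refined Kato inequality for harmonic $1$-forms, $|\nabla|dh|| ^2\le\frac{n-1}{n}|\nabla^2h|^2$. This shows that $|dh_i|^{\frac{n-2}{n-1}}$ is subharmonic on the regular part. To integrate by parts across $\mathcal S$, I need the growth bound $|dh_i|(x)\le C\,d(x,\mathcal S)^{-\beta}$. This comes from local gradient estimates for harmonic functions on Ricci-flat balls, which give $|dh|\lesssim r^{-1}\operatorname{osc}_{B_r}h\lesssim r^{\alpha-1}$ on balls $B_r$ avoiding $\mathcal S$. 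Integrating against cutoffs $\eta_r$ adapted to the $r$-tubular neighbourhood of $\mathcal S$, the error is bounded by $r^{-2}\,\mathrm{vol}(N_r(\mathcal S))\cdot\sup|dh|^{2}$. The Minkowski bound controls $\mathrm{vol}(N_r(\mathcal S))\lesssim r^{\,3-\frac1{n-1}-\delta}$. The exponent $\frac1{n-1}$ is chosen to match the Kato gain, so that the boundary terms vanish as $r\to0$. This yields $\int|\nabla^2h_i|^2=0$, so each $dh_i$ is parallel on the regular set.

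The $n$ forms $dh_i$ are parallel, and they are linearly independent because their cohomology classes are. Hence they give a global parallel coframe on $\tor^n\setminus\mathcal S$. The map $H=(h_1,\dots,h_n)$ mod $\mathbb Z^n$ is continuous on $\tor^n$ and a local isometry onto a flat torus $(\tor^n,g_0)$ off $\mathcal S$, where $g_0$ is determined by the constant Gram matrix $\langle dh_i,dh_j\rangle$. It has degree one, since its action on $H^1$ is the identity. Since $\mathcal S$ does not disconnect $\tor^n$ (codimension $>2$), $H$ is injective and proper off $\mathcal S$, so $H$ extends to a homeomorphism $\Phi$. Finally, $d_g$-distances on the complement agree with flat distances, because generic segments avoid a set of codimension $\ge 2$. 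This gives the statement about metric completions.

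I expect the main obstacle to be the cutoff integration by parts near $\mathcal S$. There the gradient blow-up from the Hölder estimates must be balanced exactly against the Minkowski volume decay and the refined Kato exponent.
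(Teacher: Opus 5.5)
Your Ricci-flatness argument is the paper's (Theorem~\ref{dichotomy} in its Hausdorff form, then Proposition~\ref{prop:weightedSc}, then Theorem~\ref{thm:torus}). The removability part follows the paper's strategy, but the decisive cutoff estimate does not close as you wrote it. You bound the error by $r^{-2}\operatorname{vol}(N_r(\mathcal S))\sup|dh|^2$, with the sup over $N_{2r}\setminus N_r$. That is what you get by testing the plain Bochner identity $\tfrac12\Delta|dh|^2=|\nabla^2h|^2$ against $\eta_r^2$. With $|dh|\lesssim r^{\alpha-1}$ and $\operatorname{vol}(N_r(\mathcal S))\lesssim r^{3-\frac1{n-1}-\varepsilon}$, this error is $r^{2\alpha-\frac{n}{n-1}-\varepsilon}$. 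It tends to $0$ only if $\alpha>\frac{n}{2(n-1)}$. The De Giorgi--Nash--Moser exponent depends only on $n$ and $\Lambda$ and can be arbitrarily small, so the argument fails. Switching to the critical power $|dh|^{\frac{n-2}{n-1}}$ does not help by itself. At exactly that exponent the refined Kato inequality leaves a zero coefficient in front of $|\nabla^2h|^2$, so you get subharmonicity but no Hessian control from the cutoff argument.

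The missing idea is to use a power strictly between these two. The paper takes $v=(|\nabla u|^2+1)^p$. Bochner and refined Kato give $\Delta v\geq 2p\bigl(1-2(1-p)\tfrac{n-1}{n}\bigr)(|\nabla u|^2+1)^{p-1}|\mathrm{Hess}\,u|^2$, whose coefficient is positive iff $2p>\frac{n-2}{n-1}$. Testing against $\psi_\delta^2$ and absorbing yields
\[
\int\psi_\delta^2(|\nabla u|^2+1)^{p-1}|\mathrm{Hess}\,u|^2\lesssim\int(|\nabla u|^2+1)^{p}|\nabla\psi_\delta|^2\lesssim\delta^{\frac{n-2}{n-1}-2p(1-\alpha)-\varepsilon}.
\]
Both conditions hold for $\frac{n-2}{n-1}<2p<\frac{n-2}{(n-1)(1-\alpha)}$, which is a nonempty window for every $\alpha>0$. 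This is the precise sense in which $\frac1{n-1}$ matches the Kato gain: the slack comes from $\alpha>0$, not from $\alpha$ being large.

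Separately, ``$\mathcal S$ does not disconnect $\tor^n$, so $H$ is injective'' is not a valid inference; the paper is also thin at this point. One way to close it is to lift to $\R^n$, where $\widetilde H=x-\widetilde f$ is proper. Off the closed set $\widetilde H^{-1}(\widetilde H(\widetilde{\mathcal S}))$, the map $\widetilde H$ is a proper local homeomorphism between connected open sets. Its target is the complement of a closed set of Hausdorff dimension $<n-2$, hence simply connected, so $\widetilde H$ is a homeomorphism there. Being a local isometry there, it satisfies $|\widetilde H(x)-\widetilde H(y)|_{\overline g}\geq\Lambda^{-1/2}|x-y|_{g_0}$ on a dense set, which gives injectivity everywhere.
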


\begin{proof} We first prove the Ricci-flatness assertion. Suppose to the contrary that $\Ric_g$ does not vanish at some point of $\tor^n\setminus \mathcal{S}$. By Theorem \ref{dichotomy}, we may assume that the $L^\infty$-metric $g$ satisfies 
\begin{equation}
    R(g)>0 \text{ in } \tor^n\setminus \mathcal{S}.
\end{equation} Using Proposition \ref{prop:weightedSc}, there is a complete metric $g'$ on $\tor^n\setminus \mathcal{S}$, a constant $\mu>\frac{n-1}{n}$ and a smooth function $f\in C^\infty(\tor^n\setminus \mathcal{S})$ such that 
\begin{equation}
    R_{\mu, f}(g')>0
\end{equation}
Since $\pi_1(U)\rightarrow\pi_1(\tor^n)$ is not a surjection, this leads to a contradiction with Theorem \ref{thm:torus}. Hence
$\Ric_g=0$ on $\tor^n\setminus\mathcal S$.

The remaining rigidity assertions are proved in Section \ref{sec:rigidity}.
\end{proof}

\subsection{Simplical sucomplex singularities} 
For general singular sets, Theorem \ref{B} requires an additional assumption concerning the fundamental group of a neighborhood of the singular set. Consequently, it does not fully resolve Conjecture \ref{conj:Schoen} in the case of tori. We remedy this when $\mathcal S$ is an embedded simplicial subcomplex of dimension at most $n-3$, showing that the additional fundamental-group assumption can be removed. Recall that Conjecture \ref{conj:Schoen} concerns the case in which $\mathcal S$ is an embedded submanifold, which admits a compatible triangulation as an embedded simplicial complex. Thus, our result resolves Conjecture \ref{conj:Schoen} for tori. A crucial ingredient in our argument is the quantitative scalar-curvature estimate established in Proposition \ref{simplical-blow-up}. The same approach also gives the following version of Llarull's theorem for $L^\infty$ metrics.

\begin{theorem}\label{thm:LinfLlarull}
    Let $(M^n, g)$ be a closed spin $L^\infty$-manifold. Suppose that the singular set $\Sing\subset M$ a Lipschitz embedded simplicial complex with dimension at most $n-3$. If $\Phi\colon (M,g)\to (\sph^n,g_{\sph^n})$ is a Lipschitz map of non-zero degree that is smooth away from $\Sing$ and satisfies 
    $$R(g)-2|\wedge^2 d\Phi|^{g}_{\tr}\geq 0$$
 on $M\setminus\Sing$, then $\Phi$ is a metric isometry up to scaling.
\end{theorem}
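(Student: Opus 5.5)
We argue by contradiction, following the scheme of Theorem \ref{B}, with Theorem \ref{thm:Llarull} in place of Theorem \ref{thm:torus} and Proposition \ref{simplical-blow-up} in place of Proposition \ref{prop:weightedSc}. Since $2|\wedge^2 d\Phi|_{\tr}\le n(n-1)|\wedge^2 d\Phi|$, the hypothesis implies $R(g)\ge n(n-1)|\Bigwedge^2 d\Phi|_g$ on $M\setminus\Sing$, so Theorem \ref{dichotomy-Llarull} applies; a simplicial complex of dimension $\le n-3$ has Minkowski dimension $<n-2$. Running that dichotomy with the trace norm (the proof only uses the first-eigenvalue conformal change of Proposition \ref{positive-solu} and the rigidity case of \cite[Lemma 2.1]{WangXie24}), either $\Phi$ is an isometry up to scaling on the regular part, or there is an $L^\infty$-metric, still called $g$, smooth off $\Sing$, with $R(g)-2|\wedge^2 d\Phi|^g_{\tr}\ge c_0>0$ on $M\setminus\Sing$. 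It remains to rule out the second alternative. Once the regular part is a local isometry, the metric-isometry conclusion on all of $M$ follows from the Lipschitz property of $\Phi$ and the fact that $\Sing$ has codimension $\ge3$, which makes it invisible to lengths of curves.

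Assume the strict inequality. Apply Proposition \ref{simplical-blow-up}, with $\varepsilon<c_0/2$, to get $g'=u^{2a}g$ complete on $M\setminus\Sing$ and a weight $f$ with $R_{\mu,f}(g')\ge u^{-2a}(R(g)-\varepsilon)$ away from $U_1$ and $R_{\mu,f}(g')\ge Cu^{-2a}d(x,\Sing)^{-2}$ on $U_1$. Since $|\wedge^2 d\Phi|^{g'}_{\tr}=u^{-4a}|\wedge^2d\Phi|^g_{\tr}\le u^{-2a}|\wedge^2d\Phi|^g_{\tr}$ (because $u\ge1$), we get $R_{\mu,f}(g')-2|\wedge^2 d\Phi|^{g'}_{\tr}>0$ outside $U_1$. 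Note that $R(g)$ need not be positive there; we need $R-\varepsilon-2|\wedge^2 d\Phi|_{\tr}\ge c_0-\varepsilon>0$, which holds.

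The main obstacle is that $\Phi$ is not locally constant near $\Sing$, which Theorem \ref{thm:Llarull} requires. We modify $\Phi$ on $U_1$ to a map $\Phi'$ that is constant in a smaller neighborhood of $\Sing$. First shrink $U_1$ so that $\Phi(U_1)$ lies in a small ball $B$ of $\sph^n$; this is possible because $\Phi$ is uniformly continuous (Lipschitz) and $\Sing$ is compact with finitely many components, handled one at a time. If $\Phi(\Sing)$ is not contained in a single small ball, we first precompose with a retraction of a regular neighborhood of the simplicial complex, at the cost of a Lipschitz error. Then set $\Phi'=\exp_{q}(\theta(d(x,\Sing))\exp_q^{-1}\Phi)$, where $\theta$ is a cutoff equal to $0$ near $\Sing$ and $1$ outside $U_1$, chosen with the logarithmic profile $|\theta'|\lesssim (r|\log r|)^{-1}$. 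This gives $\deg\Phi'=\deg\Phi$ and $|d\Phi'|_g\le C(|d\Phi|_g+d(x,\Sing)^{-1}\operatorname{diam}B)$, hence $|\wedge^2 d\Phi'|^g_{\tr}\le C\,d(x,\Sing)^{-2}\cdot\delta$ with $\delta$ small on $U_1$. Converting to $g'$ multiplies by $u^{-4a}\le u^{-2a}$, so this error is absorbed by the term $Cu^{-2a}d(x,\Sing)^{-2}$ from Proposition \ref{simplical-blow-up}, provided the neighborhood, and hence $\delta$, is small. The constant $C$ there must not deteriorate when $U_1$ shrinks; this is the delicate point, and we would first try to fix $U_1$ and shrink only the support of $\theta$ inside it.

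Now $(M\setminus\Sing,g')$ is a complete open spin manifold. The map $\Phi'$ is locally constant outside a compact set, has nonzero degree, and satisfies $R_{\mu,f}(g')-2|\wedge^2 d\Phi'|_{\tr}>0$ with $\mu>\frac{n-1}{n}$. This contradicts Theorem \ref{thm:Llarull}, which completes the proof.
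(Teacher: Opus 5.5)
Your architecture is the paper's: reduce to a strict inequality via Theorem \ref{dichotomy-Llarull}, blow up with Proposition \ref{simplical-blow-up} and its bound $R_{\mu,f}(g')\geq Cu^{-2a}d(x,\Sing)^{-2}$ on $U_1$, make the map constant near $\Sing$, and contradict Theorem \ref{thm:Llarull}. The deformation step, however, has a genuine gap.

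All the smallness in your estimate comes from $\operatorname{diam}B$, that is, from $\Phi(U_1)$ lying in a small ball. This is not available in general, because $\Phi(\Sing)$ can be large. Already for $M=\sph^n$, $\Phi=\mathrm{id}$, $n\geq 4$ and $\Sing$ a great circle, the image of $\Sing$ is a great circle. Shrinking $U_1$ only brings $\Phi(U_1)$ close to $\Phi(\Sing)$. Treating components separately does not help either. Precomposing with a retraction of a regular neighbourhood onto $\Sing$ leaves the image equal to $\Phi(\Sing)$. Without a small $B$, your map $\exp_q^{-1}\circ\Phi$ need not even be defined on $U_1$: in the example, any $q$ on the circle has $-q$ on it too. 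Moreover, your bound $|d\Phi'|_g\leq C(|d\Phi|_g+d(x,\Sing)^{-1}\operatorname{diam}B)$ then gives an error of order $d(x,\Sing)^{-2}$ with a constant that is not small. The fixed constant $C$ from Proposition \ref{simplical-blow-up} cannot absorb it.

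The paper supplies two missing ideas.

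\emph{Avoid a point, not a large region.} Since $\Phi$ is Lipschitz and $\dim\Sing\leq n-3$, the set $\Phi(\Sing)$ misses some point $q$. So there is a neighbourhood $V\subset U_1$ of $\Sing$ with $\Phi(V)\subset\sph^n\setminus B(q,r_0)$. This set contracts onto the antipode $q^*$ through a homotopy $H$ with uniform Lipschitz constant $C_H$. This is your exponential contraction, but centred at $q^*$.

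\emph{Get smallness from many scales, not from a small image.} Set $\Psi=H(\varphi(-\log d_{\Sing}),\Phi)$, where $\varphi$ rises from $0$ to $\pi$ on $[s_0,s_1]$. Then
$|d\Psi|_g\leq C_H(\|\varphi'\|_\infty d_{\Sing}^{-1}+L_\Phi)$.
Taking $s_1-s_0$ large makes $\|\varphi'\|_\infty$ as small as desired, and $L_\Phi\leq L_\Phi e^{-s_0}d_{\Sing}^{-1}$ on the transition region. Hence $2|\wedge^2d\Psi|^{g'}_{\tr}\leq n(n-1)|d\Psi|^2_{g'}$ is a small multiple of $u^{-2a}d_{\Sing}^{-2}$ and is absorbed. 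Here $U_1$ and $C$ are fixed first, which settles the point you flagged as delicate. Your log-log profile would also give this smallness if you kept its $|\log d|^{-1}$ factor in the estimate. It works only once the contraction is centred at the antipode of a point avoided by $\Phi(V)$.

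Two smaller slips. First, $2|\wedge^2 d\Phi|_{\tr}\leq n(n-1)|\wedge^2 d\Phi|$ shows the trace-norm hypothesis is the weaker one. It does not imply $R(g)\geq n(n-1)|\Bigwedge^2 d\Phi|_g$, so the dichotomy really must be rerun with the trace norm. Second, $|\wedge^2 d\Phi|^{g'}_{\tr}=u^{-2a}|\wedge^2 d\Phi|^{g}_{\tr}$, not $u^{-4a}$. This is harmless, since you then bound it by $u^{-2a}$ times the $g$-quantity anyway.
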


\begin{proof}[Proof of Theorem \ref{thm:LinfLlarull}] Suppose, to the contrary, that $\Phi$ is not a homothety (i.e. isometry up to scaling). By  Theorem \ref{dichotomy-Llarull}, we may assume that 
    \begin{equation}\label{uniform-po}
    R(g)-2|\wedge^2 d\Phi|^g_{\tr}\geq \kappa>0\end{equation}
Proposition \ref{simplical-blow-up} yields constants 
$
    \mu>\frac{n-1}{n}$, $ a>0 $, $ C>0
$ and 
functions $u, f\in C^\infty(M^n\setminus \mathcal{S})$ and a neighborhood $U_1$ of $\mathcal{S}$ such that
the conformal metric $g'=u^{2a}g$ is complete on $M^n \setminus \mathcal{S}$ and  the scalar curvature of $g'$ satisfies 
    \begin{equation}\label{curvature-estimate-blow-up-2}\begin{split}
        &R_{\mu, f}(g')= u^{-2a} (R(g)-\kappa/2)\text{ on } M\setminus U_1\\
        &R_{\mu, f}(g')\geq Cu^{-2a}d_g(x, \mathcal{S})^{-2} \text{ on } U_1
        \end{split}
    \end{equation}

In order to apply Theorem \ref{thm:Llarull}, we now deform $\Phi$ such that it is locally constant near $\mathcal{S}$. Choose a regular value point $q\in \sph^n$ with $q\notin \Phi(\mathcal{S})$. Since $\mathcal{S}$ is compact, there exists $r_0$ and relative compact neighborhood $V\subset U_1$ of $\mathcal{S}$ such that 
\[\Phi(V)\subset \sph^n\setminus B(q, r_0).\]
Let $q^*$ be the antipodal point of $q$. In the geodesic polar coordinates $(r, \theta)$ centered at  $q$, define a  homotopy ${H}: [0, \pi]\times \mathbb{S}^n\longrightarrow \mathbb{S}^n$
\begin{equation}
{H} (t, r, \theta)=\left(\min\big\{(1-\frac{t}{\pi}+\frac{t}{r_0})r, \pi\big\}, \theta\right)
\end{equation}
Thus,  ${H}(0, \cdot)=id_{\sph^n}$, while ${H}(\pi, \sph^n\setminus B(q,r_0))=q^*$. Moreover, ${H}$ is Lipschitz and satisfies  
\begin{equation}\label{homotopy-boundness-2}
    |\nabla {H}|_{g_{\mathbb{S}^n}}(t, x)\leq C_{H}. 
\end{equation} for some constant $C_{H}<\infty$. For each $t\in [0, \pi]$, the map ${H}(t, \cdot)$ is homotopic to the identity and hence has degree one. 

\vspace{2mm}

Since $M^n$ is closed, there is a constant $L_\Phi$ with 
$
    |\nabla \Phi|\leq L_\Phi
$.
Choose $s_0<s_1$ sufficiently large that $\{d_{\mathcal{S}}(x)\leq e^{-s_0}\}\subset V$ and 
\begin{equation}\label{constant-choice-2}
    s_1-s_0\geq 4n(n-1)\pi C^{-1/2} C_{H}, \quad e^{s_0}\geq
			2n(n-1) C^{-1/2} C_HL_\Phi,
\end{equation}
where $d_{\mathcal S}(x)$ is the distance from $x$ to $\mathcal S$.
Choose a cutoff function $\varphi$ such that 
\begin{equation}\label{cut-off-choice}
    \varphi(s)=0 \text{ for } s\leq s_0, \quad \varphi(s)=\pi \text{ for } s\geq s_1, \quad 0\leq \varphi'(s)\leq \frac{C^{\frac 1 2}C^{-1}_{H} }{2n(n-1)} 
\end{equation}  and define a map $\Psi\colon  M\setminus \mathcal{S}\rightarrow \sph^n$
\begin{equation}
    \Psi(x):={H}(\varphi(-\log d_\mathcal{S}(x)), \Phi(x)) \quad x\in M\setminus \mathcal{S}.
\end{equation}
 Since $\varphi(-\log d_{{\mathcal{S}}}(x))=0$ for $x\in M\setminus V$, we have that $\Psi(x)=\Phi(x)$ outside $ V$. On the other hand, $\varphi(-\log d_{{\mathcal{S}}}(x))=\pi$ for $x$ sufficiently close to $\mathcal{S}$, which implies  \[\Psi(x)=q^*\] in a neighborhood of $\mathcal{S}$. After the extending this constant value across ${\mathcal{S}}$, the map  $\Psi$ is locally constant near ${\mathcal{S}}$. Furthermore, $\Psi$ is locally constant outside a compact set of $M\setminus {\mathcal{S}}$ and the homotopy $H$ implies that 
\begin{equation}\label{degree-psi2}
    \deg(\Psi)=\deg(\Phi)\neq 0. 
\end{equation}

It remains to show the positivity of $R_{\mu,f} (g')-2|\wedge^2\Psi|^{g'}_\tr$. Consider  first the region $\{e^{-s_1}\leq d_{{\mathcal{S}}}(x)\leq e^{-s_0}\}$.  On this region, 
the chain rule, \eqref{homotopy-boundness-2} and \eqref{cut-off-choice} give
\begin{equation}\label{grad-psi-2}
    |d\Psi|_{{g}'}\leq u^{-a}\cdot C_{H}\big(\frac{1}{2n(n-1)}C^{\frac{1}{2}}C^{-1}_{H}\cdot d^{-1}_{\mathcal {S}}(x)+L_\Phi \big)\leq  \frac{C^{\frac 1 2}}{n(n-1)} u^{-a} d^{-1}_{\mathcal {S}}(x)
\end{equation}
where the second inequality follows from \eqref{constant-choice-2} that 
\[ L_\Phi\leq \frac{1}{2n(n-1)}C^{-1}_{{H}}C^{\frac 1 2 }e^{s_1}\leq \frac{1}{2n(n-1)}C^{-1}_{{H}}C^{\frac 1 2 } d^{-1}_{\mathcal{S}}(x)
\] 
on this region. It follows from \eqref{curvature-estimate-blow-up-2} and \eqref{grad-psi-2} that on this region 
\begin{equation}
\begin{split}
   R_{\mu, f}(g') -|\wedge^2 d\Psi|^{\tilde{g}'}_\tr&\geq R_{\mu, f}(g')- n(n-1)|d\Psi|^2_{{g}'}\\
   &\geq u^{-2a}\left(
    {C}\cdot d_{{\mathcal{S}}}(x)^{-2}-\frac{C}{n(n-1)}\cdot d_{{\mathcal{S}}}(x)^{-2}\right) > 0
    \end{split}
\end{equation}
On the region $\{d_{\mathcal S}\leq e^{-s_1}\}$, the map $\Psi$ is
	constant. Therefore,
	\[
	R_{\mu,f}(g')
	-2\lvert\wedge^2d\Psi\rvert_{\tr}^{g'}
	=
	R_{\mu,f}(g')
	\geq
	C u^{-2a}d_{\mathcal S}^{-2}>0.
	\]
Finally, on the region $\{d_{\mathcal S}\geq e^{-s_0}\}$, we have
	$\Psi=\Phi$. Since $g'=u^{2a}g$,
	\[
	\lvert\wedge^2d\Phi\rvert_{\tr}^{g'}
	=
	u^{-2a}\lvert\wedge^2d\Phi\rvert_{\tr}^{g}.
	\]
	It follows from \eqref{curvature-estimate-blow-up-2} and
	\eqref{uniform-po} that
	\begin{align}
		R_{\mu,f}(g')
		-2\lvert\wedge^2d\Psi\rvert_{\tr}^{g'}
		&\geq
		u^{-2a}
		\left(
		R(g)-\frac{\kappa}{2}
		-2\lvert\wedge^2d\Phi\rvert_{\tr}^{g}
		\right)\notag\\
		&\geq
		\frac{\kappa}{2}u^{-2a}>0.
	\end{align}

Thus, $(M\setminus \mathcal{S}, g')$ is a complete spin manifold and $\Psi: M\setminus \mathcal{S}\rightarrow \sph^n$ is non-zero degree map that is locally constant outside a compact set satisfying 
\begin{equation}
    R_{\mu, f}(g')-2|\wedge^2 \Phi|^{g'}_\tr>0,
\end{equation}which leads to a contradiction to Theorem \ref{thm:Llarull}.
\end{proof}

Combining the proofs of Theorems \ref{thm:Llarull} and \ref{thm:LinfLlarull}, we have  the following immediate corollary.
\begin{corollary}\label{Corollary:noncompactLinfLlarull}
    Let $M^n$ be an open spin manifold and $\Sing\subset M$ a compact embedded simplicial complex with codimension at least three. Let $\Phi\colon M^n\to \sph^n$ be a Lipschitz map that is smooth away from $\Sing$ and is locally constant outside  some compact set , and  $\Phi$ has a non-zero degree. Then there is no complete $L^\infty$-metric $g$ on $M$ which  is smooth away from $\Sing$ and satisfies 
    $$R(g)-2|\wedge^2 d\Phi|^g_\tr>0.$$
\end{corollary}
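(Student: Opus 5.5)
The plan is to argue by contradiction and reproduce the proof of Theorem \ref{thm:LinfLlarull}, replacing the closed-manifold preliminaries by the noncompact data. Suppose $g$ is a complete $L^\infty$-metric on $M$, smooth away from $\Sing$, with $R(g)-2|\wedge^2 d\Phi|^g_\tr>0$ on $M\setminus\Sing$. Since $\Phi$ is locally constant outside a compact set $K$, we fix a compact neighborhood $K'$ of $K\cup\Sing$.

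\emph{Step 1: no dichotomy step is needed.} The inequality is already strict, so Theorem \ref{dichotomy-Llarull} is not invoked. We only need a uniform margin near $\Sing$. By compactness of $\overline{U_2}\setminus U_1$ for small neighborhoods $U_1\Subset U_2$ of $\Sing$, the quantity $R(g)-2|\wedge^2 d\Phi|^g_\tr$ has a positive lower bound $\kappa$ there.

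\emph{Step 2: localized blow-up.} The Green-function and Poisson constructions in Lemmas \ref{exist-Green}, \ref{pt-poission-eq}, \ref{decay-integ-estimate} are local on the precompact set $U_2$. So Proposition \ref{simplical-blow-up} carries over verbatim, with the cutoff $\chi\in C_c^\infty(U_2)$. It produces $w=1+\tau\chi\psi_\Sing$, $g'=w^{2a}g$ and $\rho=b\log w$, with $w\equiv1$ and $\rho\equiv0$ outside $U_2$. Then $g'=g$ outside the compact set $\overline{U_2}$. Combined with completeness of $g$ at infinity and the divergence of $\int w^a$ along curves approaching $\Sing$, this makes $g'$ complete on $M\setminus\Sing$. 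Near $\Sing$ we keep $R_{\mu,\rho}(g')\ge Cw^{-2a}d(x,\Sing)^{-2}$. On $U_2\setminus U_1$ we have $w^{2a}R_{\mu,\rho}(g')\ge R(g)-\varepsilon$, and we choose $\tau$ small so that $\varepsilon<\kappa/2$. Outside $U_2$ the metric is unchanged, so positivity there is just the hypothesis.

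\emph{Step 3: deformation of $\Phi$.} Pick a regular value $q\notin\Phi(\Sing)$ and use the same radial homotopy $H$ and logarithmic cutoff $\varphi(-\log d_\Sing)$ to get $\Psi$. The map $\Psi$ equals $q^*$ near $\Sing$, equals $\Phi$ outside $V\subset U_1$, and satisfies $\deg\Psi=\deg\Phi\neq0$. The constant $L_\Phi$ is now taken as the Lipschitz bound of $\Phi$ on the compact set $\overline{V}$, which is finite. The three-region estimate is then identical to before:
\begin{itemize}
\item In the transition annulus, the gradient bound $|d\Psi|_{g'}\le \frac{C^{1/2}}{n(n-1)}w^{-a}d_\Sing^{-1}$ is absorbed by the $Cw^{-2a}d_\Sing^{-2}$ term.
\item Near $\Sing$, where $\Psi$ is constant, positivity comes from the blow-up term alone.
\item Elsewhere $\Psi=\Phi$, and positivity comes from Step 2 and the hypothesis.
\end{itemize}
$\Psi$ is locally constant outside the compact set $K'\setminus\{d_\Sing<e^{-s_1}\}$ of $M\setminus\Sing$. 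Moreover $M\setminus\Sing$ is spin as an open subset of $M$.

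\emph{Step 4: contradiction.} Theorem \ref{thm:Llarull} applies to $(M\setminus\Sing,g')$ and $\Psi$, which is a contradiction.

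The main obstacle I expect is the strict-positivity bookkeeping at infinity. Theorem \ref{thm:Llarull}'s proof needs a positive lower bound for $R_{\mu,f}-2|\wedge^2d\Psi|_\tr$ only on the compact sets where $\nabla\rho_i\neq0$, which lie near $K$. So mere pointwise positivity on the noncompact ends suffices, and I would check this carefully when quoting that proof.
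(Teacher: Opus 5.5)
Your proposal is correct and matches the paper's approach. The paper states the corollary follows by combining the proofs of Theorems \ref{thm:Llarull} and \ref{thm:LinfLlarull}, and that is what you do: a localized blow-up near the compact set $\Sing$, the radial-homotopy deformation of $\Phi$ to a constant near $\Sing$, and then Theorem \ref{thm:Llarull} applied to $(M\setminus\Sing,g')$. Your two adaptations are sound. The dichotomy step is replaced by strict positivity together with compactness of $\overline{U_2}\setminus U_1$. And, as the proof of Theorem \ref{thm:Llarull} shows, pointwise positivity suffices because the region where the Callias potential is not $1$ is compact by completeness.
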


We now apply Corollary \ref{Corollary:noncompactLinfLlarull} to the rigidity problem for enlargeable manifolds. The following proposition shows that, when the singular set is an embedded simplicial subcomplex of codimension at least three, nonnegative scalar curvature forces Ricci-flatness on the regular part.

\begin{proposition}\label{ric-vanish-torus} Let $(M^n, g)$ be a closed enlargeable and spin $L^\infty$-manifold. Suppose that the singular set $\mathcal{S}$ is an embedded simplicial subcomplex of dimension at most $n-3$. If the scalar curvature $R(g)$ is non-negative on $M^n\setminus \mathcal{S}$, then $\Ric_g$ vanishes on  $M^n\setminus \mathcal{S}$. 
\end{proposition}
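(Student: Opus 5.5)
We argue by contradiction, following the template of Theorem \ref{B} but replacing Theorem \ref{thm:torus} by Corollary \ref{Corollary:noncompactLinfLlarull} and the definition of enlargeability. Suppose $\Ric_g\neq 0$ somewhere on $M\setminus\mathcal S$. Since a simplicial subcomplex of dimension at most $n-3$ has Minkowski dimension $\le n-3<n-2$, Theorem \ref{dichotomy} applies. After replacing $g$ by the metric $g'$ it provides, we may assume $g$ is an $L^\infty$-metric, smooth on $M\setminus\mathcal S$, with $R(g)\ge\kappa>0$ there. In particular $R(g)$ is bounded below by a positive constant on the regular part.

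Next we use enlargeability of the closed manifold $M$ with respect to the smooth background metric $g_0$. Since $g$ is uniformly equivalent to $g_0$, say $\Lambda^{-1}g_0\le g\le \Lambda g_0$, for every $\delta>0$ there is a spin covering $\pi_\delta:M_\delta\to M$ and a map $\Phi_\delta:M_\delta\to\sph^n$ of nonzero degree, locally constant outside a compact set, with $|d\Phi_\delta|_{\pi_\delta^*g_0}\le\delta$. Hence $|\wedge^2 d\Phi_\delta|^{\pi_\delta^*g}_{\tr}\le C_n\Lambda\,\delta^2$. We choose $\delta$ so small that $2C_n\Lambda\delta^2<\kappa/2$. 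If the given $\Phi_\delta$ is only Lipschitz, we smooth it on $M_\delta$ first; smoothness on all of $M_\delta$ is automatic if we use a smooth background-metric enlargeability map.

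On $M_\delta$, the lifted metric $\tilde g=\pi_\delta^*g$ is an $L^\infty$-metric that is smooth away from the lifted singular set $\tilde{\mathcal S}=\pi_\delta^{-1}(\mathcal S)$. This set is again an embedded simplicial subcomplex of codimension at least three, but it is noncompact when the covering is infinite. It satisfies $R(\tilde g)-2|\wedge^2d\Phi_\delta|_{\tr}\ge\kappa/2$. We cannot apply Corollary \ref{Corollary:noncompactLinfLlarull} verbatim, since $\tilde{\mathcal S}$ need not be compact and $\tilde g$ need not be complete near it. Instead we repeat its proof on the covering.

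First, we lift the conformal blow-up. Apply Proposition \ref{simplical-blow-up} on $M$ itself with the positive function $R(g)$ and error $\varepsilon<\kappa/4$. Pulling back $u_\varepsilon$ and $f_\varepsilon$ gives a metric $\tilde g'=\tilde u^{2a}\tilde g$ on $M_\delta\setminus\tilde{\mathcal S}$. This metric is complete because it is the lift of a complete metric. It satisfies the curvature estimates \eqref{weighted-sc-curvature-ineq} with the same constants, since everything lifts from the compact base.

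Second, we carry out the deformation $\Psi=H(\varphi(-\log d_{\tilde{\mathcal S}}),\Phi_\delta)$ exactly as in the proof of Theorem \ref{thm:LinfLlarull}. This step is the main obstacle. We need a point $q\in\sph^n$ that is a regular value with $q\notin\overline{\Phi_\delta(\tilde{\mathcal S})}$, together with a uniform $r_0$, and $\Phi_\delta(\tilde{\mathcal S})$ may be dense. We would resolve this using the smallness $|d\Phi_\delta|\le\delta$. Each component of the lifted neighborhood $\tilde U_1$ has bounded diameter only if $\pi_1(U_1)\to\pi_1(M)$ has finite image, which we cannot guarantee. So we instead use the estimate near $\tilde{\mathcal S}$ directly. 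There $R_{\mu,f}(\tilde g')\ge C\tilde u^{-2a}d^{-2}$, while $|\wedge^2 d\Phi_\delta|^{\tilde g'}_{\tr}\le C_n\tilde u^{-2a}\delta^2$. Hence we do not need to make $\Phi_\delta$ constant near $\tilde{\mathcal S}$ for positivity. We only need it for the index theory, that is, for the Callias potential in Theorem \ref{thm:Llarull}.

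For the index theory, we note that near $\tilde{\mathcal S}$ the map $\Phi_\delta$ is already locally constant outside a compact set $K\subset M_\delta$. Since $\tilde{\mathcal S}\cap K$ is compact, we only need to modify $\Phi_\delta$ near this compact piece, where a point $q\notin\Phi_\delta(\tilde{\mathcal S}\cap K')$ exists for a compact $K'\supset K$ by Sard's theorem and measure-zero considerations. Outside $K'$ the map is locally constant, and the constant values only require the potential $\mathcal V_i$. The gradient bounds \eqref{constant-choice-2} then go through with $L_\Phi=\delta$. Theorem \ref{thm:Llarull}, applied to the complete manifold $(M_\delta\setminus\tilde{\mathcal S},\tilde g')$ with the map $\Psi$, yields a contradiction. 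Therefore $\Ric_g=0$ on $M\setminus\mathcal S$.
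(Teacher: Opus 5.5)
Your outline matches the paper's: use Theorem~\ref{dichotomy} to reach $R(g)\geq\kappa>0$, take an enlargeability map with small differential on a spin cover, and derive a contradiction from the Callias argument of Theorem~\ref{thm:Llarull}. The paper's proof simply applies Corollary~\ref{Corollary:noncompactLinfLlarull} on the cover, although that corollary is stated for a compact singular set. You are right that $\widetilde{\mathcal S}=\pi_\delta^{-1}(\mathcal S)$ is noncompact for an infinite cover. Pulling back the blow-up of Proposition~\ref{simplical-blow-up} from the closed base correctly gives a complete metric on $M_\delta\setminus\widetilde{\mathcal S}$ with the estimates \eqref{weighted-sc-curvature-ineq}. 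Your observation that positivity holds without modifying $\Phi_\delta$ is also correct.

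The gap is in your last step. Theorem~\ref{thm:Llarull} needs $\Psi$ to be locally constant off a compact subset of $M_\delta\setminus\widetilde{\mathcal S}$, i.e.\ on $N\setminus\widetilde{\mathcal S}$ for some open $N\supset\widetilde{\mathcal S}$. Removing a set of codimension at least three does not disconnect a connected open set. Hence $\Psi$ must be \emph{constant} on a punctured neighbourhood of each connected component of $\widetilde{\mathcal S}$.

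Your $\Psi$ equals $q^*$ near $\widetilde{\mathcal S}\cap K'$ and equals $\Phi_\delta\equiv p_i$ near $\widetilde{\mathcal S}\cap Z_i$ outside $K'$. Suppose a component of $\widetilde{\mathcal S}$ runs from $K'$ into an end $Z_i$. This is exactly the case you raise, e.g.\ an essential circle in $\tor^n$ lifts to lines. Then you would need $q^*=p_i$. Otherwise the transition from $q^*$ to $p_i$, with or without a cutoff along $\widetilde{\mathcal S}$, occurs arbitrarily close to $\widetilde{\mathcal S}$. The non-constancy set is then not relatively compact in $M_\delta\setminus\widetilde{\mathcal S}$, and Theorem~\ref{thm:Llarull} does not apply. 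Sard does not let you arrange $q^*=p_i$, since $-p_i$ may lie in $\Phi_\delta(\widetilde{\mathcal S}\cap K)$. (Also, $\Phi_\delta(\widetilde{\mathcal S})\subset\Phi_\delta(\widetilde{\mathcal S}\cap K)\cup\{p_i\}$ is compact and null, so it cannot be dense.)

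One repair uses the classical (Gromov--Lawson) normalization that $\Phi_\delta\equiv p$ outside $K$ for a single point $p$.
\begin{enumerate}
\item Choose $q\notin F:=\Phi_\delta(\widetilde{\mathcal S}\cap K)\cup\{p\}$ and $r_0<\frac12\operatorname{dist}(q,F)$. The uniform Lipschitz bound on $\Phi_\delta$ then maps a \emph{uniform} neighbourhood $\{d_{\widetilde{\mathcal S}}\leq r\}$ into $\sph^n\setminus B(q,r_0)$.
\item Replace $H$ by a homotopy $G_t$ with $G_0=\mathrm{id}$, $G_t(p)=p$ for all $t$, and $G_\pi(\sph^n\setminus B(q,r_0))=\{p\}$. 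To build it, take the homotopy of Theorem~\ref{thm:LinfLlarull} with centre $-p$ and a small radius $r_1$. Conjugate it by a diffeomorphism $\theta$ fixing $p$ with $\theta(B(-p,r_1))\subset B(q,r_0)$.
\item Set $\Psi=G(\varphi(-\log d_{\widetilde{\mathcal S}}),\Phi_\delta)$ on all of $M_\delta\setminus\widetilde{\mathcal S}$. Then $\Psi\equiv p$ outside $K$ and on $\{d_{\widetilde{\mathcal S}}\leq e^{-s_1}\}$. So $\Psi$ is constant off the compact set $K\cap\{d_{\widetilde{\mathcal S}}\geq e^{-s_1}\}$, its degree is $\deg\Phi_\delta$, and the curvature estimates go through.
\item The Lipschitz constant of $G$ depends on $\delta$. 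This is harmless: it only enters the choice of $s_0,s_1$ in \eqref{constant-choice-2}, because $\Psi=\Phi_\delta$ on $\{d_{\widetilde{\mathcal S}}\geq e^{-s_0}\}$.
\end{enumerate}
For tori you can avoid all of this. $\tor^n$ is enlargeable through finite covers, so $\widetilde{\mathcal S}$ is compact and the proof of Theorem~\ref{thm:LinfLlarull} applies directly on the cover.
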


\begin{proof} Suppose, to the contrary, that $\Ric_g$ does not vanish on $M^n\setminus \mathcal{S}$. From Theorem \ref{dichotomy}, we may assume that the $L^\infty$ metric $g$ satisfies 
\begin{equation}\label{curvatue-est}
    R(g)>\kappa>0 \text{ on } M^n\setminus \mathcal{S}.
\end{equation}
Fix $\varepsilon<\sqrt{\frac{\kappa}{2n(n-1)}}$.
By  enlargeability of $M$, there exists a spin cover map $\pi_\varepsilon: M_\varepsilon\rightarrow M$ and a smooth map $\Phi_\varepsilon: (M_\varepsilon, \pi^*_\varepsilon g)\rightarrow (\mathbb{S}^n, g_{\mathbb{S}^n})$ satisfying 
\begin{equation}\label{first-der-phi}
    \deg(\Phi_\varepsilon)\neq 0 \quad | d\Phi_\varepsilon|_{\pi^*_\varepsilon (g)}\leq \varepsilon. 
\end{equation} and it is locally constant outside a compact set $K$. 

The metric $\pi^*_\varepsilon{g}$ on $M_\varepsilon$ is complete  and \eqref{curvatue-est} remains valid after lifting to the covering space. It follows that 
\begin{equation}\begin{split}
    R(\pi^*_\varepsilon(g))-2|\wedge^2 d\Phi|^{\pi^*_\varepsilon(g)}_\tr&\geq R(\pi^*_\varepsilon(g))-n(n-1)|d\Phi_\varepsilon|^2_{\pi^*_\varepsilon(g)}\\
    &\geq \kappa-n(n-1)\varepsilon^2>0,
    \end{split}
\end{equation} which leads to a contradiction with Corollary \ref{Corollary:noncompactLinfLlarull}. 
\end{proof}

\section{Rigidity results for $L^\infty$-metrics}\label{sec:rigidity} In this section, we prove the rigidity assertions in Theorem \ref{B}. As an application, we give an affirmative answer to Conjecture \ref{conj:Schoen} for tori.
Throughout this section, we assume that $(\mathbb T^n,g)$ is an $L^\infty$-Riemannian manifold and $\mathcal S$ is its singular set such that 
\begin{equation}
{\dim}_{\mathcal M}(\mathcal S)
\leq n-3+\frac{1}{n-1} \textup{ and }
\operatorname{Ric}_g\equiv0
\text{ on }\mathbb T^n\setminus\mathcal S.
\end{equation}
We construct harmonic maps
$
u_\ell\colon(\mathbb T^n,g)\longrightarrow\mathbb S^1
$, $ 1\leq\ell\leq n,
$ and prove that their differentials are parallel and form a global frame of the cotangent bundle. The resulting harmonic coordinate map
$
(u_1,\ldots,u_n)\colon\mathbb T^n\longrightarrow\mathbb T^n
$ then provides a global coordinate system. These coordinates allow us to extend the metric structure across $\mathcal S$, thereby establishing the desired rigidity.

\subsection{Construction of Harmonic maps}\label{sec:harmonicMaps} Recall the $n$-torus is the quotient $\tor^n=\R^n/\Z^n$ and let
$\{e_\ell\}_{\ell=1}^n$ denote the standard basis of $\R^n$. The associated
coordinate functions on the universal cover determine the cohomology classes
from which the desired harmonic maps are constructed.

Fix the standard flat metric $g_0$ on $\tor^n$. Since $g$ is an
$L^\infty$ metric, there exists $\Lambda\geq 1$ such that
\begin{equation}\label{eq:uniform-equivalence}
	\Lambda^{-1}g_0\leq g\leq \Lambda g_0.
\end{equation}

\begin{lemma}\label{lemma:harmonicCordinates} 
There exist functions $\{u_\ell\}_{1\leq \ell\leq n}$ such that
	\begin{enumerate}
		\item $u_\ell \in W^{1,2}_{loc}(\R^n)$ and  $u_\ell(p+e_j)=u_\ell(p)+\delta_{\ell j}$, and
		\item for any $\varphi\in C_c^\infty(\R^n,g_0)$, we have $\int_{\R^n}\langle\nabla u_\ell,\nabla\varphi\rangle=0$ with respect to the metric $g$.
	\end{enumerate}
\end{lemma}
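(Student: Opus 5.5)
We construct $u_\ell$ as $u_\ell = x_\ell + v_\ell$, where $x_\ell$ is the $\ell$-th standard coordinate on $\R^n$ and $v_\ell$ is a $\Z^n$-periodic correction, i.e.\ a function on $\tor^n$. Then the quasi-periodicity $u_\ell(p+e_j)=u_\ell(p)+\delta_{\ell j}$ is automatic. The harmonicity condition (2) becomes a linear elliptic equation for $v_\ell$ with bounded measurable coefficients: writing $g=g_{ij}dx^idx^j$ and $\sqrt{g}\,g^{ij}$ for the coefficient matrix (which is measurable, symmetric, and uniformly elliptic by \eqref{eq:uniform-equivalence}), we seek $v_\ell\in W^{1,2}(\tor^n)$ with
\begin{equation*}
\int_{\tor^n}\sqrt{g}\,g^{ij}\partial_i v_\ell\,\partial_j\varphi\,dx
=-\int_{\tor^n}\sqrt{g}\,g^{i\ell}\,\partial_i\varphi\,dx
\end{equation*}
for all $\varphi\in C^\infty(\tor^n)$, where the right-hand side is $-\int\langle\nabla x_\ell,\nabla\varphi\rangle_g\,dV_g$.

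The key step is to solve this by the Lax--Milgram theorem, or equivalently by minimizing the Dirichlet energy $E(v)=\int_{\tor^n}|dx_\ell+dv|_g^2\,dV_g$ over $v\in W^{1,2}(\tor^n)$ with $\int v\,dV_{g_0}=0$. The bilinear form $B(v,\varphi)=\int\langle\nabla v,\nabla\varphi\rangle_g dV_g$ is bounded on $W^{1,2}(\tor^n,g_0)$, since $g$ and $g_0$ are uniformly equivalent, and it is coercive on the mean-zero subspace by the Poincar\'e inequality for $g_0$ together with \eqref{eq:uniform-equivalence}. The functional $\varphi\mapsto-\int\langle dx_\ell,d\varphi\rangle_g dV_g$ is bounded because $|dx_\ell|_g\le\Lambda^{1/2}$ and $dV_g\le\Lambda^{n/2}dV_{g_0}$. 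It also annihilates constants, so the equation holds for all test functions, not just mean-zero ones. Lax--Milgram therefore gives a unique mean-zero solution $v_\ell$.

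It remains to pass from periodic test functions to compactly supported test functions on $\R^n$. Given $\varphi\in C_c^\infty(\R^n)$, form its periodization $\bar\varphi(p)=\sum_{k\in\Z^n}\varphi(p+k)$, which is a smooth function on $\tor^n$. Since the lifts of $g$, $v_\ell$ and $dx_\ell$ are $\Z^n$-invariant, unfolding the integral gives
\begin{equation*}
\int_{\R^n}\langle\nabla u_\ell,\nabla\varphi\rangle_g\,dV_g=\int_{\tor^n}\langle\nabla u_\ell,\nabla\bar\varphi\rangle_g\,dV_g=0.
\end{equation*}
Local $W^{1,2}$ regularity of $u_\ell$ follows because $x_\ell$ is smooth and $v_\ell$ lifts to a $W^{1,2}_{loc}$ function.

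No step here is a serious obstacle. The only point requiring care is the coercivity and boundedness of the form for a merely measurable metric, and this is immediate from the uniform equivalence \eqref{eq:uniform-equivalence}, since it makes all the estimates independent of any smoothness of $g$. The singular set $\mathcal S$ plays no role at this stage.
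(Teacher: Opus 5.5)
Your proof is correct and is essentially the paper's argument. Minimizing $\int|dx_\ell+dv|_g^2\,dV_g$ over mean-zero $v$ (equivalently, Lax--Milgram for this symmetric coercive form) is the same as the paper's orthogonal projection of $\nabla x_\ell$ onto the closed subspace $\nabla(W^{1,2}(\tor^n)_*)\subset L^2(T\tor^n)$, with your $v_\ell=-f_\ell$; your periodization $\bar\varphi=\sum_{k\in\Z^n}\varphi(\cdot+k)$ is a cleaner way to make precise the paper's final passage from periodic to compactly supported test functions, which the paper phrases as ``a finite sum of lifts''.
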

\begin{proof}
	Let $x_\ell$ be the coordinate function in the $\ell$-th direction. In particular, $x_\ell\in C^\infty(\R^n)$ and satisfies
	$
		x_\ell(p+e_j)= x_\ell(p)+\delta_{\ell j}
$
	for any $p\in\R^n$. Therefore, the gradient $\nabla x_\ell$ is invariant under the $\Z^n$-action, descends to $\tor^n$, and lies in $L^2(T(\tor^n))$. Consider the space
	$$W^{1,2}(\tor^n)_*=\left\{h\in W^{1,2}(\tor^n):\int_{\tor^n} h=0\right\}.$$
	Then there exists a constant $\lambda_1>0$ such that
	$$\|\nabla h\|^2\leq \|h\|_{W^{1,2}}^2=\|h\|^2+\|\nabla h\|^2\leq \left(1+\frac{1}{\lambda_1}\right)\|\nabla h\|^2.$$
	Thus, the map 
	$\nabla\colon W^{1,2}(\tor^n)_*\to L^2(T(\tor^n)$
	is injective and has closed range. From the closed graph theorem, $\nabla$ is invertible. 
    
    Let $P$ denote the orthogonal projection from $L^2(T(\tor^n))$ onto the image $\nabla(W^{1,2}(\tor^n)_*)$. Set
	\begin{equation}
		f_\ell=\nabla^{-1}(P(\nabla x_\ell)) \quad 
		u_\ell=x_\ell-\widetilde f_\ell,
	\end{equation}
	where $\widetilde f_\ell$ is the lift of $f_\ell$ to $\R^n$. Clearly, $u_\ell$ satisfies conditions (1). 
    
    It remain to verify item (2) in the statement. For our construction, the projection ensures that $(\nabla x_\ell-\nabla f_\ell)\perp \nabla(W^{1,2}(\tor^n)_*)$. For any $\psi\in C^\infty(\tor^n,g_0)$, the function $\psi-\langle\psi,1\rangle\cdot 1$ belongs to $W^{1,2}(\tor^n)_*$. Thus,	
	\begin{equation}
	    0=\int_{\tor^n}\langle \nabla x_\ell-\nabla f_\ell,\nabla(\psi-\langle\psi,1\rangle\cdot 1)\rangle=\int_{\tor^n}\langle\nabla u_\ell,\nabla\psi\rangle.
	\end{equation}
	The general case follows by decomposing any test function $\varphi\in C_c^\infty(\R^n,g_0)$ into a finite sum of lifts of functions from $\tor^n$.
\end{proof}

The following lemma states some regularity properties of these maps.

\begin{lemma}\label{lemma:estimate}
    Assume that $g$ is smooth on $\tor^n\setminus\Sing$. Denote by $\widetilde\Sing$ the lift of $\Sing$ to $\R^n$.
	The functions $\{u_\ell\}_{1\leq \ell\leq n}$ constructed in Lemma \ref{lemma:harmonicCordinates} satisfy:
	\begin{enumerate}
		\item $u_\ell$ is smooth away from $\widetilde{\mathcal S}$,
		\item $u_\ell\in C^{0,\alpha}(\R^n)$ for some $\alpha\in(0,1)$, and
		\item $|\nabla u_\ell|\leq C r^{-1+\alpha}$, where $r=\dist(~\cdot~,\widetilde{\mathcal S})$.
	\end{enumerate}
\end{lemma}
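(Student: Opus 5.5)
We take the three assertions in order, since each feeds the next.

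\emph{Step 1: smoothness off the singular set.} By Lemma \ref{lemma:harmonicCordinates}, $u_\ell$ is a weak $W^{1,2}_{loc}$ solution of $\Delta_g u_\ell=0$ on $\R^n$. On any ball $B\Subset\R^n\setminus\widetilde{\mathcal S}$ the lifted metric is smooth, so the equation has smooth coefficients there. Standard interior elliptic regularity (bootstrapping $W^{2,2}_{loc}$, then Schauder) then gives $u_\ell\in C^\infty(\R^n\setminus\widetilde{\mathcal S})$.

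\emph{Step 2: global H\"older continuity.} In $g_0$-coordinates the weak equation reads $\partial_i(\sqrt{\det g}\,g^{ij}\partial_j u_\ell)=0$. The coefficient matrix $\sqrt{\det g}\,g^{ij}$ is measurable and, by \eqref{eq:uniform-equivalence}, uniformly elliptic with constants depending only on $\Lambda$ and $n$. De Giorgi--Nash--Moser \cite[Theorems 8.22, 8.24]{GT-Book} then gives, for every ball $B_{2\rho}(p)$,
\[
\operatorname*{osc}_{B_\rho(p)}u_\ell\le C(\rho/\rho_0)^{\alpha}\operatorname*{osc}_{B_{\rho_0}(p)}u_\ell ,
\]
with $\alpha\in(0,1)$ and $C$ depending only on $n,\Lambda$. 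The quasi-periodicity $u_\ell(p+e_j)=u_\ell(p)+\delta_{\ell j}$ means $u_\ell-x_\ell$ is $\Z^n$-periodic and locally bounded, hence globally bounded. Therefore $\operatorname{osc}_{B_1(p)}u_\ell\le C'$ uniformly in $p$. This yields a uniform estimate $|u_\ell(x)-u_\ell(y)|\le C|x-y|^\alpha$ for $|x-y|\le 1$, and hence $u_\ell\in C^{0,\alpha}(\R^n)$ in the local sense, which is what (2) asserts; no capacity argument is needed.

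\emph{Step 3: gradient bound.} This is the main step. Fix $x\notin\widetilde{\mathcal S}$ with $r=\dist(x,\widetilde{\mathcal S})\le 1$, and set $c=u_\ell(x)$. On $B_{r/2}(x)$ the metric $g$ is smooth, but its derivatives are not controlled uniformly as $r\to0$. So we rescale: $\hat g(y)=r^{-2}g(x+ry)$ on $B_{1/2}(0)$, with $\hat u(y)=u_\ell(x+ry)-c$. By Step 2, $|\hat u|\le Cr^\alpha$ on $B_{1/2}$. The metric $\hat g$ is $\Lambda$-equivalent to Euclidean and Ricci-flat, since $\operatorname{Ric}_g=0$ on the regular part. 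We need a gradient estimate for $\hat g$-harmonic functions that uses only these two facts. For Ricci-flat metrics this is Cheng--Yau: $|\nabla\log v|\le C$ for positive harmonic $v$ on a ball, which requires only $\operatorname{Ric}\ge0$. The ball $B^{\hat g}_{c_\Lambda}(0)$ lies in the region where $\hat g$ is smooth, so apply Cheng--Yau to $v=\hat u+2\sup|\hat u|$, or use the Bochner/Moser estimate for $|\nabla\hat u|^2$, which is subharmonic when $\operatorname{Ric}=0$ together with a mean-value inequality on Ricci-nonnegative balls. This gives $|\nabla\hat u|(0)\le C\sup|\hat u|\le Cr^{\alpha}$. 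Scaling back gives $|\nabla u_\ell|(x)\le C r^{\alpha-1}$. For $r\ge1$ the same argument at unit scale gives a uniform bound, which is consistent with (3).

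The obstacle is exactly this uniformity: the local smooth regularity of $g$ near $\widetilde{\mathcal S}$ degenerates. Ricci-flatness supplies the scale-invariant gradient estimate. The only care needed is to work on $\hat g$-geodesic balls contained in the smooth region, which is guaranteed by the $\Lambda$-comparison with $g_0$-balls.
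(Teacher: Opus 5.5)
Your proposal is correct and follows essentially the paper's route. Both arguments use De Giorgi--Nash--Moser for the Hölder bound across $\widetilde{\mathcal S}$, then the Cheng--Yau gradient estimate (which relies on the section's standing assumption $\operatorname{Ric}_g\equiv 0$ on the regular part) on a ball of radius comparable to $\dist(\cdot,\widetilde{\mathcal S})$, combined with the Hölder oscillation bound $Cr^{\alpha}$ on that ball; your rescaling, the quasi-periodicity argument for uniform constants, and the comparison of $g$- and $g_0$-balls just make explicit steps the paper leaves implicit.
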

\begin{proof} Since $(\tor^n,g)$ is an $L^\infty$-manifold, the operator $\Delta_g$ may be written as a divergence-form elliptic operator with $L^\infty$ coefficients that are uniformly elliptic.  Standard De Giorgi--Nash--Moser theory \cite[Theorem 8.9]{Han} applies, even across the singular set, to yield $\alpha\in(0,1)$ such that $u_\ell \in C^{0,\alpha}(\tor^n)$.

We now prove the desired gradient estimate. Because the metric $g$ is smooth outside $\mathcal{S}$, standard gradient estimates for harmonic functions give a uniform bound for $|\nabla u_\ell|_g$ outside of $N_{r_0}(\mathcal{S})$, the closed $r_0$-neighborhood of $\mathcal{S}$ with respect to the background metric $g_0$, where $r_0>0$ is fixed. Take $p\in N_{r_0}(\mathcal{S})\setminus \mathcal{S}$, and set $r_p:=d_{g_0}(p,\mathcal{S})>0$. Then $B_{r_p /2}(p)\subset M^n \setminus\mathcal{S}$. Since the Ricci curvature vanishes in this ball, we can apply the 
%Consider the positive harmonic function on $B(x, r_x/2)$
% \[y^i-\inf_{B(x, r_x/2)} y^i+\epsilon\]
% for any positive constant $\epsilon>0$. Since $\text{Ric}_g$ vanishes on this ball, we apply 
Cheng--Yau gradient estimate \cite[Corollary 3.2, Chapter I]{SchoenYau1994Lectures} to obtain 
\begin{equation}\label{fahjg09haq09hh}
    |\nabla u_\ell(p)|_g\leq \sup_{B_{r_p /4}}|\nabla(u_\ell^i-\inf_{B_{r_p /2}(p)} u_\ell)|_g\leq cr_p^{-1} \sup_{B_{r_p /2}(p)}|u_\ell-\inf_{B_{r_p /2}(p)} u_\ell|.  %\leq cc_0 r_p^{\gamma-1},
\end{equation} 
Furthermore, by the H\"{o}lder estimate above
\begin{equation}\label{-09}
\sup\limits_{p', p''\in B_{r_p /2}(p)}|u_\ell(p')-u_\ell(p'')|\leq c_0 d_{g_0}(p',p'')^\alpha\leq c_1 r_p^{\alpha}. 
\end{equation}
Combining \eqref{fahjg09haq09hh} and \eqref{-09} yields the desired result. We note that the final constant obtained is uniform,
since $N_{r_0}(\mathcal{S})$ is compact.
\end{proof}

\subsection{Proof of the rigidity statement of Theorem \ref{B}} We begin with a family of cutoff functions adapted to the singular set.
\begin{lemma}\label{lemma:cut-off}
    Assume that $\Sing\subset \tor^n$ has Minkowski dimension at most $n-k$. For any  $\varepsilon>0$ and  any small $\delta>0$, there exists a cut-off function $\chi_\delta$ such that
$$0\leq \chi_\delta\leq 1,\quad
\chi_\delta=1 \text{ on } N_\delta(\Sing),  \quad \supp\, \chi_\delta\subset N_{2\delta}(\Sing)$$
\[ 
 \quad 
\int_{\tor^n} |\nabla \chi_\delta|^2dV_g\leq C_{\varepsilon}\cdot \delta^{k-2-\varepsilon}. \]
\end{lemma}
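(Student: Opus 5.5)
We will build $\chi_\delta$ by mollifying the distance function to $\Sing$, measured in the flat background metric $g_0$. Because $g$ and $g_0$ are uniformly equivalent by \eqref{eq:uniform-equivalence}, it makes no difference (up to a factor depending only on $\Lambda$) whether $|\nabla\chi_\delta|^2$ and the volume are computed with $g$ or with $g_0$. We may therefore work entirely with $g_0$ and convert to $g$ at the end. Neighborhoods $N_r(\Sing)$ are taken with respect to $g_0$, consistently with their use in Lemma \ref{lemma:estimate}.

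\textbf{Step 1 (Lipschitz cutoff).} Let $d(x)=d_{g_0}(x,\Sing)$, which is $1$-Lipschitz. Fix a smooth function $\eta\colon\R\to[0,1]$ with $\eta=1$ on $(-\infty,1.1]$, $\eta=0$ on $[1.9,\infty)$ and $|\eta'|\leq 2$. Set $\chi_\delta(x)=\eta(d(x)/\delta)$. Then $0\leq\chi_\delta\leq1$, $\chi_\delta=1$ on $N_\delta(\Sing)$, and $\supp\chi_\delta\subset N_{2\delta}(\Sing)$. The function $\chi_\delta$ is Lipschitz and satisfies $|\nabla\chi_\delta|_{g_0}\leq 2\delta^{-1}$ almost everywhere. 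The gradient vanishes outside the annulus $N_{2\delta}(\Sing)\setminus N_\delta(\Sing)$.

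If smoothness is required, we mollify $d$ at scale $\delta/100$. This gives a smooth function $\tilde d$ with $|\tilde d-d|\leq\delta/100$ and $|\nabla\tilde d|\leq 1$. Using $\tilde d$ in place of $d$ preserves all the support properties, because of the slack built into the cutoff values $1.1$ and $1.9$.

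\textbf{Step 2 (volume of tubes).} The Minkowski hypothesis is the key input. Fix $\varepsilon>0$. Then $\overline{\dim}_{\mathcal M}\Sing\leq n-k<n-k+\varepsilon$, so there is a constant $C_\varepsilon$ with
\[
\mathrm{Vol}_{g_0}\bigl(N_{r}(\Sing)\bigr)\leq C_\varepsilon\, r^{\,k-\varepsilon}\qquad\text{for all }0<r\leq r_0 .
\]
This holds for small $r$ directly from the definition of upper Minkowski dimension (via the $\limsup$ of $\log \mathrm{Vol}(N_r)/\log r$, or equivalently via covering numbers, $N(r)\,r^n\gtrsim \mathrm{Vol}(N_r)$). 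For $r$ bounded below, it holds trivially after enlarging $C_\varepsilon$.

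\textbf{Step 3 (energy estimate).} Combining the two steps gives
\[
\int_{\tor^n}|\nabla\chi_\delta|_g^2\,dV_g\leq C(\Lambda,n)\int_{N_{2\delta}(\Sing)}|\nabla\chi_\delta|_{g_0}^2\,dV_{g_0}\leq C\,\delta^{-2}\,C_\varepsilon(2\delta)^{k-\varepsilon}=C'_\varepsilon\,\delta^{k-2-\varepsilon}.
\]
This is exactly the claimed bound.

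The only point requiring care is Step 2. The statement uses the Minkowski dimension rather than the Hausdorff dimension precisely so that tube volumes are controlled at every small scale, not merely along a sequence of scales. If the paper's $\dim_{\mathcal M}$ means the lower Minkowski dimension, the volume bound holds only along a sequence $\delta_i\to0$. In that case I would state the lemma for such a sequence, which suffices for the later cutoff arguments. I expect to use the upper-dimension convention, under which everything is routine.
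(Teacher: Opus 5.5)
Your proof is correct and follows essentially the same route as the paper. Both arguments take a cutoff in the $g_0$-distance with $|\nabla\chi_\delta|_{g_0}\lesssim\delta^{-1}$, bound $\mathrm{Vol}_{g_0}(N_{2\delta}(\Sing))\lesssim_\varepsilon\delta^{k-\varepsilon}$ using the Minkowski hypothesis (the paper implicitly uses the upper convention, applying the bound for all sufficiently small $\delta$), and convert to $g$-norms via the uniform equivalence of $g$ and $g_0$; your mollification of the distance function and your remark on the Minkowski convention just make explicit details the paper leaves implicit.
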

\begin{proof} There is a constant $C$ such that 
\[C^{-1}g_0\leq g\leq Cg_0\]
Since the Minkowski dimension of $\Sing$ $\leq n-k$, for every small $\varepsilon_1>0$ there exists a constant $C_{\varepsilon_1}'$ such that, for all
sufficiently small $\delta>0$,
\begin{equation}
\operatorname{Vol}_{g_0}\bigl(N_\delta (\Sing)\bigr)
\leq {C_{\varepsilon_1}'} \delta^{k-\varepsilon_1}.
\end{equation}
Here $N_\delta({\Sing})$ denotes the $\delta$-neighborhood of ${\Sing}$ with respect to $g_0$.

Choose smooth cutoff functions $\chi_\delta$ satisfying
\begin{equation}
0\leq \chi_\delta\leq 1,\quad
\chi_\delta=1 \text{ on } N_\delta(\Sing),
\quad
\supp\, \chi_\delta\subset N_{2\delta}(\Sing),
\quad 
|\nabla \chi_\delta|_{g_0}\leq \frac{2}{\delta}.
\end{equation}
Furthermore, it follows that 
$
|\nabla \chi_\delta|_g^2\,dV_g
\leq
C^{n+2} |\nabla \chi_\delta|_{g_0}^2\,dV_{g_0}.
$
Therefore
\begin{equation}
\int_{\tor^n} |\nabla \chi_\delta|_g^2\,dV_g
\leq
\frac{4C^{n+2}}{r^2}
\operatorname{Vol}_{g_0}\bigl(N_{2\delta}({S})\bigr)
\leq C_{\varepsilon_1}\delta^{k-2-\varepsilon_1}.
\end{equation}
\end{proof}

\begin{proposition}\label{prop:harmonicMap}
	Assume that $\textup{Ric}_g\geq 0$ on $\tor^n\setminus \Sing$, and that $\Sing$ has Minkowski dimension at most $n-3+(n-1)^{-1}$.
	Then the functions $u_\ell$ above have vanishing Hessians and are therefore Lipschitz. Furthermore, there exists a smooth flat metric $\overline{g}$ on $\R^n$ such that the induced map
	$$\overbar U=(\bar u_1,\ldots,\bar u_n)\colon \tor^n\to (\tor^n=\R^n/\Z^n,\overline{g})$$
	is a homeomorphism and satisfies $g=\overbar U^*\overline g$.
\end{proposition}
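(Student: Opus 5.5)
The plan is to run the Bochner argument for each $u_\ell$ on the regular part and use the cutoffs of Lemma \ref{lemma:cut-off} to absorb the boundary terms at $\Sing$. The regularity input is Lemma \ref{lemma:estimate}: $|\nabla u_\ell|\leq Cr^{-1+\alpha}$ with $r=\dist(\cdot,\widetilde\Sing)$.

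\textbf{Bochner step.} Work on one fundamental domain, where $\nabla u_\ell$ is $\Z^n$-periodic and descends to $\tor^n\setminus\Sing$. There Bochner gives
\[
\tfrac12\Delta|\nabla u_\ell|^2=|\nabla^2u_\ell|^2+\Ric(\nabla u_\ell,\nabla u_\ell)\geq|\nabla^2u_\ell|^2 .
\]
The refined Kato inequality for harmonic functions, $|\nabla^2u|^2\geq\frac{n}{n-1}|\nabla|\nabla u||^2$, implies that $v:=|\nabla u_\ell|^{\frac{n-2}{n-1}}$ is subharmonic. More usefully, for a power $v=|\nabla u_\ell|^{q}$ we get $\Delta v\geq c_q\,|\nabla u_\ell|^{q-2}|\nabla^2u_\ell|^2$ whenever $q>\frac{n-2}{n-1}$. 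Multiply by $(1-\chi_\delta)^2$ and integrate by parts on $\tor^n\setminus\Sing$, where everything is smooth:
\[
c\int(1-\chi_\delta)^2|\nabla u|^{q-2}|\nabla^2u|^2\leq 2\int(1-\chi_\delta)|\nabla\chi_\delta|\,|\nabla v| .
\]
Bound $|\nabla v|\lesssim|\nabla u|^{q-1}|\nabla^2u|$ and apply Cauchy--Schwarz. Absorbing half of the left-hand side leaves the error
\[
E_\delta:=\int|\nabla\chi_\delta|^2|\nabla u|^{q}\lesssim\delta^{-2}\delta^{(\alpha-1)q}\operatorname{Vol}(N_{2\delta}(\Sing))\lesssim\delta^{k-2-\varepsilon-(1-\alpha)q},
\]
with $k=3-\frac1{n-1}$.

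\textbf{Main obstacle.} The crux is making $E_\delta\to0$. We need $q>\frac{n-2}{n-1}$ and $1-\frac1{n-1}-\varepsilon-(1-\alpha)q>0$. Since $1-\frac1{n-1}=\frac{n-2}{n-1}$, taking $q$ just above $\frac{n-2}{n-1}$ gives exponent $\approx\frac{n-2}{n-1}\alpha-\varepsilon>0$ for small $\varepsilon$. This exact matching of the Kato exponent with the Minkowski bound is where the threshold $n-3+\frac1{n-1}$ enters, and it is the delicate computation.

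The weight $|\nabla u|^{q-2}$ may be singular where $\nabla u=0$. I would handle this by replacing $|\nabla u|^2$ with $|\nabla u|^2+\eta$ and letting $\eta\to0$. Letting $\delta\to0$ then gives $|\nabla u|^{q-2}|\nabla^2u|^2=0$, so $\nabla^2u_\ell=0$ on the regular set, which is connected since $\Sing$ has codimension $>2$. Hence $|\nabla u_\ell|$ is constant there, so $u_\ell$ is Lipschitz across $\Sing$, as $\Sing$ has zero capacity.

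\textbf{Flat structure.} The $du_\ell$ are parallel on $\tor^n\setminus\Sing$, so $\bar g_{ij}:=g(\nabla u_i,\nabla u_j)$ is constant. These forms are linearly independent: by the periodicity in Lemma \ref{lemma:harmonicCordinates}(1), $\int_{\tor^n}du_\ell$ over the loop $e_j$ equals $\delta_{\ell j}$, and a constant combination vanishing a.e. would have zero periods. So $\bar g_{ij}$ is positive definite, and we let $\bar g$ be the constant metric on $\R^n$ with inverse $(\bar g_{ij})$. The map $\widetilde U=(u_1,\dots,u_n)\colon\R^n\to\R^n$ is Lipschitz, $\Z^n$-equivariant, and a local isometry $g\to\bar g$ off $\widetilde\Sing$. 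It therefore descends to $\overbar U\colon\tor^n\to\tor^n$ of degree one, since the induced map on $H_1$ is the identity.

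To get a homeomorphism, first note that $\overbar U$ is a local isometry on a connected open dense set, and that $d_g$ on the regular part is comparable to $d_{g_0}$. By the capacity/codimension argument, curves can avoid $\Sing$. Hence $\widetilde U$ is distance non-increasing, and $\Z^n$-equivariance together with degree one gives properness and surjectivity. For injectivity, the local isometry $\tor^n\setminus\Sing\to\tor^n$ has degree one, hence is a one-sheeted covering onto its image. Its complement $\overbar U(\Sing)$ has measure zero and codimension $\ge2$, so the image is connected and simply-connectedness arguments on the regular part apply. Injectivity on $\Sing$ follows from the Lipschitz inverse extending by continuity. Finally, $g=\overbar U^*\bar g$ holds on the regular part, which is the claim.
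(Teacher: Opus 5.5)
Your analytic part is the paper's argument. Your $q$ is the paper's $2p$, and the paper's $v=(|\nabla u|^2+1)^p$ is your regularization with $\eta=1$ held fixed, so no limit $\eta\to0$ is needed. Your window $\frac{n-2}{n-1}<q<\frac{k-2}{1-\alpha}$, with $k-2=\frac{n-2}{n-1}$, is exactly \eqref{eq:choice-p}. The period argument for positive definiteness of the Gram matrix is also the same as the paper's.

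The gap is in the homeomorphism step. The inference \emph{the local isometry $\tor^n\setminus\Sing\to\tor^n$ has degree one, hence is a one-sheeted covering onto its image} does not hold, for three reasons:
\begin{itemize}
\item $\overbar U|_{\tor^n\setminus\Sing}$ is not proper, so it has no degree.
\item A local isometry out of an incomplete manifold need not be a covering of its image, because lifts of paths can run into $\Sing$.
\item $\tor^n\setminus\overbar U(\Sing)$ is not simply connected, and degree one alone does not rule out several sheets of opposite orientation.
\end{itemize}
The paper's proof has the same hole. It only says \emph{once $\overbar U$ is shown to be a local homeomorphism across $\Sing$} and never shows it, so this step needs an actual argument in either version.

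The fix is to restrict to a full preimage.
\begin{itemize}
\item Set $A=\overbar U(\Sing)$. It is compact, and since $\overbar U$ is Lipschitz, $\dim_{\mathcal H}A\le\dim_{\mathcal H}\Sing<n-2$. Let $\Omega=\tor^n\setminus A$, which is connected.
\item Then $\overbar U^{-1}(\Omega)\subset\tor^n\setminus\Sing$, and $\overbar U\colon\overbar U^{-1}(\Omega)\to\Omega$ is a proper local homeomorphism. Properness holds because it is the restriction of a map of compact spaces to a full preimage. Hence it is a finite covering.
\item Since $\tor^n\setminus\Sing$ is connected, $\det d\overbar U$ has constant sign there. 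So at any $y\in\Omega$ the degree formula gives exactly $|\deg\overbar U|=1$ preimage.
\item The inverse $\Omega\to\tor^n\setminus\Sing$ is a local isometry. Because $\mathcal H^{n-1}(A)=0$, flat segments can be perturbed off $A$. Hence this inverse is $1$-Lipschitz from $d_{\bar g}$ to the length metric of $g$, which is comparable to $d_{g_0}$. It therefore extends to a Lipschitz map $\overbar V$ on $\tor^n$.
\item $\Omega$ is dense, and so is $\overbar U^{-1}(\Omega)$: its complement $\overbar U^{-1}(A)$ is closed and null, since $\Sing$ is null and $\overbar U$ is a local diffeomorphism off $\Sing$.
\item The identities $\overbar U\circ\overbar V=\mathrm{id}$ and $\overbar V\circ\overbar U=\mathrm{id}$ therefore extend from dense sets. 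So $\overbar U$ is a bi-Lipschitz homeomorphism.
\end{itemize}
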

\begin{proof}
	Fix $\ell$ and, for simplicity, write $u=u_\ell$. On $\R^n\setminus\widetilde \Sing$, using the metric $g$, we have the Bochner formula:
	\begin{equation}\label{eq:Bochner}
		\frac 1 2\Delta(|\nabla u|^2)=\langle\nabla(\Delta u),\nabla u\rangle+|\textup{Hess}(u)|^2+\textup{Ric}_g(\nabla u,\nabla u)\geq |\textup{Hess}(u)|^2.
	\end{equation}
	Since $\nabla u$ descends to $\tor^n$, inequality \eqref{eq:Bochner} also holds on $(\tor^n\setminus \Sing,g)$. We define
	$$v=(|\nabla u|^2+1)^p$$
	where  $p\in(0,1)$ such that
	\begin{equation}\label{eq:choice-p}
		\frac{n-2}{2(n-1)}
		<p<
		\min\left\{
		1,
		\frac{1 - \frac{1}{n-1}}{2(1-\alpha)}
		\right\}.
	\end{equation} Taking the gradient yields
	$$\nabla v=p(|\nabla u|^2+1)^{p-1}\nabla(|\nabla u|^2)=2p(|\nabla u|^2+1)^{p-1}|\nabla u|\cdot\nabla(|\nabla u|).$$
	Computing the Laplacian, we find
	\begin{align*}
		\Delta v=&~p(|\nabla u|^2+1)^{p-1}\Delta(|\nabla u|^2)+p(p-1)(|\nabla u|^2+1)^{p-2}|\nabla(|\nabla u|^2)|^2\\
		=&~p(|\nabla u|^2+1)^{p-1}\Delta(|\nabla u|^2)+4p(p-1)(|\nabla u|^2+1)^{p-2}|\nabla u|^2\cdot \big|\nabla|\nabla u|\big|^2\\
		\geq &~2p(|\nabla u|^2+1)^{p-1}\left(\frac 1 2\Delta(|\nabla u|^2)+2(p-1)\big|\nabla|\nabla u|\big|^2\right)\\
        \geq&~2p(|\nabla u|^2+1)^{p-1}\left(|\textup{Hess}(u)|^2+2(p-1)\big|\nabla|\nabla u|\big|^2\right).
	\end{align*}
	By the refined Kato inequality, we have
	\begin{equation}
		|\textup{Hess}(u)|^2\geq\frac{n}{n-1}\big|\nabla|\nabla u|\big|^2.
	\end{equation}
    Thus
 \begin{equation}
		\Delta v\geq 2p\left(1+2(p-1)\frac{n-1}{n}\right)(|\nabla u|^2+1)^{p-1}|\textup{Hess}(u)|^2.
	\end{equation}
	We require that the coefficient remains positive:
	\begin{equation}\label{eq:2p>}
		1+2(p-1)\frac{n-1}{n}>0,\quad \textup{namely}\quad 2p>\frac{n-2}{n-1}.
	\end{equation}
	
	Let $\chi_\delta$ be the cut-off function as in Lemma \ref{lemma:cut-off}, and $\psi_\delta\coloneqq 1-\chi_\delta$. Therefore
	\begin{equation}
		\int_{\tor^n}|\nabla\psi_\delta|^2\lesssim\delta^{k-2-\varepsilon_1}.
	\end{equation}
	Because $\psi_\delta\in W^{1,2}(\tor^n)$ and vanishes near $S$, integration by parts yields
	\begin{equation*}
		\int_{\tor^n} \psi_\delta^2\Delta v=-\int_{\tor^n}\langle\nabla(\psi_\delta^2),\nabla v\rangle\geq\int_{\tor^n}\psi_\delta^2\cdot 2p\left(1+2(p-1)\frac{n-1}{n}\right)(|\nabla u|^2+1)^{p-1}|\textup{Hess}(u)|^2.
	\end{equation*}
	Rearranging and applying the chain rule, we have
	\begin{align*}
		&\int_{\tor^n}\psi_\delta^2\cdot 2p\left(1+2(p-1)\frac{n-1}{n}\right)(|\nabla u|^2+1)^{p-1}|\textup{Hess}(u)|^2\\
		\leq &\int_{\tor^n}|\nabla(\psi_\delta^2)|\cdot |\nabla v|=4p\int_{\tor^n}(|\nabla u|^2+1)^{p-1}\cdot \psi_\delta\big|\nabla|\nabla u|\big|\cdot|\nabla u|\cdot |\nabla\psi_\delta|\\
		\leq &4p\left(\int_{\tor^n} (|\nabla u|^2+1)^{p-1}\psi_\delta^2|\textup{Hess}(u)|^2\right)^{\frac 1 2}\left(\int _{\tor^n}
		(|\nabla u|^2+1)^{p-1}|\nabla u|^2\cdot |\nabla\psi_\delta|^2
		\right)^{\frac 1 2},
	\end{align*}
	where the last inequality follows from the Cauchy--Schwarz inequality and the Kato inequality. 
    
    Let $\alpha\in (0,1)$ be the parameter in Lemma \ref{lemma:estimate} and choose $p$ such that 
    $$\alpha=1+2(p-1)\frac{n-1}{n}.$$It follows that 
    	\begin{align*}
		&~\alpha\int_{\tor^n}\psi_\delta^2\cdot (|\nabla u|^2+1)^{p-1}|\textup{Hess}(u)|^2\\
		\leq& ~\frac{\alpha}{2}\int_{\tor^n}\psi_\delta^2\cdot (|\nabla u|^2+1)^{p-1}|\textup{Hess}(u)|^2
        +\frac{2}{\alpha}\int _{\tor^n}
		(|\nabla u|^2+1)^{p-1}|\nabla u|^2\cdot |\nabla\psi_\delta|^2,
	\end{align*}
    Canceling the common factor, we arrive at
	\begin{align*}
		\int_{\tor^n}\psi_\delta^2\cdot(|\nabla u|^2+1)^{p-1}|\textup{Hess}(u)|^2\leq \frac{4}{\alpha^2}\int_{\tor^n}
		(|\nabla u|^2+1)^{p}\cdot |\nabla\psi_\delta|^2.
	\end{align*}
	Applying Lemma \ref{lemma:estimate} and Lemma \ref{lemma:cut-off}, we can bound the right-hand side by
	\begin{equation}
		\int_{\tor^n}(|\nabla u|^2+1)^{p}|\nabla\psi_\delta|^2\lesssim \delta^{(-2+2\alpha)p}\cdot\int_{\tor^n}|\nabla\psi_\delta|^2\lesssim\delta^{(-2+2\alpha)p+k-2-\varepsilon_1}.
	\end{equation}
Choose $p$ satifying \eqref{eq:2p>} and $p<\frac{k-2}{2(1-\alpha)}$. 
We may then choose $\varepsilon_1$ sufficiently small to achieve $$(-2+2\alpha)p+k-2-\varepsilon_1>0.$$
By sending $\delta\rightarrow 0$, it follows that 
	\begin{equation}
		\lim_{\delta\to 0}\int_{\tor^n}\psi_\delta^2(|\nabla u|^2+1)^{p-1}|\textup{Hess}(u)|^2=0.
	\end{equation}
	Consequently, $\textup{Hess}(u)\equiv 0$. It immediately follows that $\nabla(\nabla u)=0$ identically on $(\tor^n\setminus S,g)$. 
	
	Now, consider the Gram metric 
	$
		G=(\langle \nabla u_\ell,\nabla u_{\ell'}\rangle)_{1\leq \ell,\ell'\leq n}
	$
	 associated with the parallel vector fields $\{\nabla u_\ell\}_{1\leq \ell\leq n}$. Since  these fields are parallel, $G$ is a constant positive semi-definite matrix on $\tor^n\setminus S$. We claim that $G$ is positive definite. 
\vspace{2mm}
Suppose otherwise. Then there exist constants $a_1,\ldots,a_n$, not all zero, such that
	\begin{equation}
		\sum_{\ell=1}^n a_\ell\nabla u_\ell=0.
	\end{equation}
	Passing to the universal cover and using the same notation for the lifted functions  imply that
	\begin{equation}
		\sum_{\ell=1}^n a_\ell u_\ell=C
	\end{equation}
	for some constant $C$ on $\R^n$. On the other hand, the equivariant relation implies  
	\begin{equation}
		C = \sum_{\ell=1}^n a_\ell u_\ell(p+e_j) = \sum_{\ell=1}^n a_\ell (u_\ell(p)+\delta_{\ell j}) = \sum_{\ell=1}^n a_\ell u_\ell(p)+a_j = C+a_j.
	\end{equation}
	This implies that all $a_j=0$, a contradiction. Hence, $G$ is positive definite ($\det(G)>0$). 

    Equip the target torus $\mathbb R^n/\mathbb Z^n$ with the flat metric
    \[ \overline g
		=
		\sum_{\ell,m=1}^n
		(G^{-1})_{\ell m}\,dy_\ell\,dy_m. \]
The equivariance of the functions $u_\ell$ ensures that the map
$U=(u_1,\ldots,u_n)$ descends to
\[
\overline U=(\overline u_1,\ldots,\overline u_n)
\colon\mathbb T^n\longrightarrow
(\mathbb R^n/\mathbb Z^n,\overline g).
\]By construction,
$
\overline U^*\overline g=g$ on $\mathbb T^n\setminus \mathcal S
$, so $\overline U$ is a local isometry on the regular part.
Recall from Lemma \ref{lemma:harmonicCordinates} that
\[
u_\ell(x)=x_\ell-\widetilde f_\ell(x),
\]where $\widetilde f_\ell$ is $\mathbb Z^n$-periodic. Therefore, the straight-line homotopy
\[
U_t(x)=x-t\widetilde f(x),
\qquad 0\leq t\leq1,
\]descends to a homotopy from the identity to $\overline U$. In particular,
$\deg(\overline U)=1$. Once $\overline U$ is shown to be a local homeomorphism across $\mathcal S$, compactness implies that it is a covering map. Its degree then forces it to be one-sheeted, and hence a homeomorphism.
\end{proof}

\begin{proof}[Proof of the rigidity statement in Theorem \ref{B}]
	Under the hypotheses of the rigidity statement, the Ricci-flatness part of
	Theorem \ref{B}, proved above, gives
	\[
	\operatorname{Ric}_g\equiv0
	\qquad\text{on }\tor^n\setminus\mathcal S.
	\]
	Proposition \ref{prop:harmonicMap} therefore yields a flat torus
	$(\R^n/\Z^n,\overline g)$ and a homeomorphism
	\[
	\overline U\colon\tor^n\longrightarrow\R^n/\Z^n
	\]
	whose restriction to $\tor^n\setminus\mathcal S$ is a smooth Riemannian
	isometry onto the complement of $\overline U(\mathcal S)$. The same
	proposition identifies the metric completion of
	$(\tor^n\setminus\mathcal S,d_g)$ with
	$(\R^n/\Z^n,d_{\overline g})$. This proves all the rigidity assertions.
\end{proof}

\begin{proof}[Proof of Theorem \ref{A}]
	Let $g$ be an $L^\infty$ metric on $\tor^n$ with singular set
	$\mathcal S$, and suppose that $R(g)\geq0$ on
	$\tor^n\setminus\mathcal S$. By Proposition \ref{ric-vanish-torus}, $g$ must be Ricci-flat on $\tor^n\setminus \mathcal{S}$. Since the singular set $\mathcal{S}$ is an embedded simplical subcomplex of dimension at most $n-3$, its Minkowski dimension is at most $n-3\leq n-3+\frac{1}{n-1}$.  Proposition \ref{prop:harmonicMap} implies the map $\overbar U$ defines a smooth structure on the entire torus $\tor^n$.
\end{proof}

\section*{Appendix}
In this appendix, we give the proofs of Lemma \ref{pt-poission-eq} and Lemma \ref{decay-integ-estimate}. 
\begin{proof}[Proof of Lemma \ref{pt-poission-eq}]
Let $G$ be the Dirichlet Green function furnished by
Lemma \ref{exist-Green}, and define
\[
\psi_p(x)
:=
\int_{U_2}
G(x,y)d_g(y,p)^{-\alpha}\,dV_g(y).
\]
Since $\alpha<n$, the function $d_g(\,\cdot\,,p)^{-\alpha}$ is locally
integrable. Thus $\psi_p$ is well defined and satisfies
\eqref{poisson}.

It remains to prove \eqref{two-side-estimate}. By the Green-function
bounds in Lemma \ref{exist-Green}, it suffices to estimate
\[
I(x,p)
:=
\int_{U_2}
d_g(x,y)^{2-n}d_g(y,p)^{-\alpha}\,dV_g(y).
\]
Set $r=d_g(x,p)$. Decompose $U_2$ into
\[
B(p,r/2),\qquad B(x,r/2),
\qquad
U_2\setminus\bigl(B(p,r/2)\cup B(x,r/2)\bigr).
\]
On the first two regions, the volume-growth estimates give
\[
\int_{B(p,r/2)}
d_g(x,y)^{2-n}d_g(y,p)^{-\alpha}\,dV_g(y)
\leq Cr^{2-\alpha},~
\int_{B(x,r/2)}
d_g(x,y)^{2-n}d_g(y,p)^{-\alpha}\,dV_g(y)
\leq Cr^{2-\alpha}.
\]
Splitting the remaining region according to which of
$d_g(x,y)$ and $d_g(p,y)$ is larger, and then integrating in annuli,
gives
\[
\int_{U_2\setminus(B(p,r/2)\cup B(x,r/2))}
d_g(x,y)^{2-n}d_g(y,p)^{-\alpha}\,dV_g(y)
\leq
C\int_{r/2}^{\infty}s^{1-\alpha}\,ds
\leq Cr^{2-\alpha},
\]
where we have used $\alpha>2$. This proves the upper bound.

For the lower bound, restrict the integral to $B(p,r/4)$. On this ball,
$d_g(x,y)\asymp r$, and hence
\[
\begin{aligned}
I(x,p)
&\geq
C^{-1}r^{2-n}
\int_{B(p,r/4)}d_g(y,p)^{-\alpha}\,dV_g(y)\\
&\geq
C^{-1}r^{2-n}
\int_0^{r/4}s^{n-1-\alpha}\,ds
\geq C^{-1}r^{2-\alpha}.
\end{aligned}
\]
The desired estimate follows.
\end{proof}

\begin{proof}[Proof of Lemma \ref{decay-integ-estimate}]
Set $r=d_g(x,\sigma)$, and choose $p_x\in\sigma$ such that
$d_g(x,p_x)=r$. The Lipschitz simplex $\sigma$ is uniformly
$k$-Ahlfors regular at sufficiently small scales. In particular,
\[
\mathcal H^k\bigl(B(p_x,s)\cap\sigma\bigr)\asymp s^k.
\]
Since
$
r\leq d_g(x,y)\leq3r
$ and $
\text{for }y\in B(p_x,2r)\cap\sigma,
$
we obtain
$$
\int_{B(p_x,2r)\cap\sigma}
f(x,y)\,d\mathcal H^k(y)
\asymp r^{k-\beta}.
$$
This proves the lower bound and the corresponding local upper bound.

For the remaining region, decompose $\sigma\setminus B(p_x,2r)$ into
dyadic annuli centered at $p_x$. Since $\beta>k$,
\[
\begin{aligned}
\int_{\sigma\setminus B(p_x,2r)}
f(x,y)\,d\mathcal H^k(y)
&\leq
C\sum_{j\geq1}
(2^jr)^{-\beta}(2^jr)^k\\
&\leq
Cr^{k-\beta}
\sum_{j\geq1}2^{j(k-\beta)}
\leq Cr^{k-\beta}.
\end{aligned}
\]
This proves \eqref{eq:simplex-integral-estimate}.
\end{proof}
\bibliographystyle{plain}
\bibliography{ref}

\end{document}